\documentclass[10pt, reqno]{amsart}
\usepackage[utf8]{inputenc}

\usepackage{style}
\usepackage{needspace}
\usepackage{dsfont}
\newcommand{\I}{\mathds{1}}

\title{Random independent sets in uncrowded hypergraphs}

\author{Abhishek Dhawan$^\star$}
\address{$^\star$Department of Mathematics, University of Illinois Urbana--Champaign}
\email{adhawan2@illinois.edu}
\thanks{Abhishek Dhawan is supported by NSF RTG grant DMS-1937241.}

\author{Lina Li$^\dagger$}
\address{$^\dagger$Department of Mathematics, University of Mississippi}
\email{lli5@olemiss.edu}
\thanks{Lina Li is supported by the Simons Foundation SFI-MPS-TSM-00024322.}

\author{Abhishek Methuku$^\star$}
\email{methuku@illinois.edu}
\thanks{Abhishek Methuku is supported by the UIUC Campus Research Board Award RB25050 and a Simons award SFI-MPS-TSM-00025583.}

\author{Minh-Quan Vo$^\star$}
\email{vo15@illinois.edu}

\author{Kewen Yuan$^\star$}
\email{kyuan8@illinois.edu}

\date{}

\begin{document}

\begin{abstract}
Given any fixed integer $k \ge 2$ and sufficiently large $d$, we show that the largest possible fractional chromatic number of a $k$-uniform $d$-degenerate uncrowded hypergraph $\mathcal{H}$ (i.e., with girth at least $5$) satisfies
\[
\chi_f(\mathcal{H}) = (1 + o_d(1)) \left((k-1)\,\frac{d}{\log d}\right)^{\frac{1}{k-1}}.
\]
In fact, we prove that this holds for $k$-uniform $d$-degenerate hypergraphs of girth at least $g$, for any given $g \ge 5$. As a corollary, we obtain improved bounds on the fractional chromatic number of $d$-degenerate linear hypergraphs. This work builds upon a recent result by Allen, Dhawan, and Noel, extending it from graphs to hypergraphs. In addition to overcoming the new difficulties that arise in the hypergraph setting, our approach yields a simpler proof even in the original graph case.

Our proof of the upper bound uses a simpler iterative procedure for sampling independent sets. We also establish bounds for fractional colorings with local demands, a framework introduced by Kelly and Postle, verifying a recent conjecture of Yu and Zhang. As a consequence, we obtain a degree-sequence bound on the independence number of uncrowded hypergraphs with a leading constant matching the shattering threshold.

For the matching lower bound, we use a hypergraph variant of the uniform attachment model and harmonic vertex weights to bound the fractional chromatic number via linear programming duality, then remove all short cycles by deleting vertices of negligible total weight. We also replace the Catalan-number argument used in the graph case with a matrix-norm estimate, simplifying the analysis.
\end{abstract}

\maketitle

\section{Introduction}\label{section: intro}

\subsection{Background}\label{subsection: background}

A \emph{hypergraph} is an ordered pair $\cH=(V,E)$, where $V$ is a finite set of \emph{vertices} and $E\subseteq 2^V$ is a family of \emph{edges}. We say that $\cH$ is \emph{$k$-uniform} (or a \emph{$k$-graph}) if every edge contains precisely $k$ vertices; under this definition, an ordinary graph is simply a $2$-uniform hypergraph. A set $I\subseteq V$ is \emph{independent} if it contains no edge of $\cH$, and the \emph{independence number} $\alpha(\cH)$ is the maximum cardinality of an independent set in $\cH$. A \emph{proper coloring} of $\cH$ assigns colors to the vertices in $V$ such that no edge is monochromatic, and the \emph{chromatic number} $\chi(\cH)$ is the minimum number of colors required for a proper coloring.

It is well known that every $n$-vertex graph $G$ with maximum degree $\Delta$ satisfies
\begin{equation}\label{eq:greedy}
    \alpha(G)\ge \frac{n}{\Delta+1} \qquad\text{and}\qquad \chi(G)\leq \Delta+1.
\end{equation}
These basic inequalities are commonly referred to as the \emph{greedy bounds}. A central theme in extremal graph and hypergraph theory is determining when local sparsity guarantees a substantial improvement over the greedy coloring bound. At the level of independent sets, this direction originates in the foundational work of Ajtai, Koml\'os, and Szemer\'edi~\cite{ajtai1980note, ajtai1981dense}, who showed that triangle-free graphs contain independent sets of size $\Omega\left(\frac{n\log\Delta}{\Delta}\right)$. Shearer~\cite{shearer1983note, shearer1991note} subsequently sharpened the leading constant to $1 - o(1)$, which remains the state of the art for this problem.

\Needspace{6\baselineskip}
In 1995, Kim~\cite{Kim95} established that every graph of girth at least $5$ and maximum degree at most $\Delta$ satisfies
\begin{equation}\label{eq:kim}
    \chi(G) \leq (1+o_\Delta(1)) \,  \frac{\Delta}{\log \Delta}.
\end{equation}
Independently, Johansson~\cite{Joh_triangle} proved that triangle-free graphs with maximum degree at most $\Delta$ satisfy $\chi(G) = O(\Delta/\log\Delta)$.\footnote{While commonly cited as a DIMACS Technical Report, no record of Johansson's manuscript exists. Instead, Molloy and Reed~\cite{MolloyReed} present a variant of Johansson's proof demonstrating that $\chi(G) \leq 160\frac{\Delta}{\log \Delta}$ for triangle-free graphs $G$.} Molloy~\cite{Molloy} later extended Kim's bound~\eqref{eq:kim} to all triangle-free graphs; see~\cite{bernshteyn2019johansson, hurley2021first, DKPS, bonamy2022bounding, martinsson2021simplified} for alternative proofs. More broadly, coloring under local sparsity constraints---where the neighborhood of every vertex spans few edges---has developed into an extensive area of research~\cite{AKSConjecture, vu2002general, DKPS, hurley2021first, anderson2024coloring, anderson2025coloring, bradshaw2025toward}; see also the survey~\cite{KangKelly2023nibble} and the references therein.

A graph $G$ is \emph{$d$-degenerate} if every subgraph of $G$ contains a vertex of degree at most $d$. Because one may replace the maximum degree $\Delta$ with degeneracy $d$ in the greedy bound in~\eqref{eq:greedy}, it is natural to ask whether Kim's bound~\eqref{eq:kim} or Johansson's $O(\Delta/\log\Delta)$ bound holds with maximum degree $\Delta$ replaced by degeneracy $d$. For the ordinary chromatic number, this fails without additional constraints on the vertex count: as observed by Alon, Krivelevich, and Sudakov~\cite{AKSConjecture}, a variant of Tutte's construction yields triangle-free $d$-degenerate graphs with chromatic number $d+1$. However, these counterexamples are necessarily large. Brada\v{c}, Fox, Steiner, Sudakov, and Zhang~\cite[Theorem~1.1]{bradavc2026coloring} recently proved that there are absolute constants $c,C>0$ such that every $n$-vertex $d$-degenerate triangle-free graph $G$ with $n\leq 2^{cd}$ satisfies $\chi(G)\leq Cd/\log(d/\log n)$. In particular, for every fixed $\eta\in(0,1)$, this gives $\chi(G) \le C d/\log d$ for some constant $C > 0$ depending on $\eta$, given that $n\leq\exp(d^{1-\eta})$ and $d$ is sufficiently large. Thus, degeneracy behaves quite differently from maximum degree in ordinary graph coloring.

The situation is different for the \emph{fractional chromatic number} $\chi_f(G)$, defined as the optimal value of the following linear programming relaxation of graph coloring:
\begin{mini}
    {\lambda \in \R^{|\mathcal{I}(G)|}}{\sum_{I \in \mathcal{I}(G)}\lambda_I}{\label{eq:chif}}{}
    \addConstraint{\sum_{\substack{I\in\mathcal{I}(G)\\v\in I}}\lambda_I}{\ge 1}{\qquad v \in V(G)}
    \addConstraint{\lambda_I}{\ge 0}{\qquad I \in \mathcal{I}(G)},
\end{mini}
where $\mathcal{I}(G)$ denotes the family of all independent sets of $G$. As usual, any proper coloring of $G$ corresponds to an integer feasible solution to this program, giving $\chi_f(G)\le\chi(G)$. Furthermore, any feasible solution $\lambda$ to~\eqref{eq:chif} with value $q$ induces a probability distribution $\mathcal{D}$ over $\mathcal{I}(G)$ such that $\Pr_{I \sim \mathcal{D}}(v\in I) \ge 1/q$, obtained by selecting an independent set $I$ with probability $\lambda_I/q$. The starting point of our investigation is the following recent result of Martinsson and Steiner~\cite{martinsson2025random}, which resolved a conjecture of Harris~\cite{harris2019some}:

\begin{theorem}[{Martinsson--Steiner~\cite{martinsson2025random}}]\label{theo: martinsson steiner}
Every $d$-degenerate triangle-free graph $G$ satisfies 
\[
\chi_f(G) \le (4 + o_d(1))\,\frac{d}{\log d}.
\]
\end{theorem}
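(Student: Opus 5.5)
We construct, for every vertex $v$, a random independent set $I$ of $G$ with $\Pr(v\in I)\ge (1-o_d(1))\frac{\log d}{4d}$. By the LP duality remark following~\eqref{eq:chif}, this gives $\chi_f(G)\le (4+o_d(1))\,d/\log d$.

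\textbf{Setup.} Fix a degeneracy ordering $v_1,\dots,v_n$ in which every vertex has at most $d$ neighbours appearing earlier; call these its \emph{back-neighbours} $N^-(v)$. Choose a parameter $p\approx \frac{\log d}{2d}$, to be tuned later.

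\textbf{The sampling procedure.} Build $I$ greedily along the ordering, in two stages.
\begin{enumerate}
\item Sample a $p$-random subset $S\subseteq V(G)$.
\item Process the vertices in order. A vertex $v\in S$ enters $I$ unless some back-neighbour $u\in N^-(v)$ has already entered $I$.
\end{enumerate}
This produces an independent set, but $\Pr(v\in I)$ depends on the possibly heavy correlations among the back-neighbours.

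To correct for this, use the Martinsson--Steiner style two-round idea. Any vertex $v$ left out of $I$ is blocked by an earlier vertex of $I$. Then add a second random set on the forward side. In that round, only vertices whose back-neighbourhood meets $I$ in few vertices are allowed.

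\textbf{Key estimate.} For each $v$, I would show that $\Pr\bigl(N^-(v)\cap I=\emptyset\bigr)$ is not too small on average. By linearity of expectation and a double-counting argument, the number of edges between $I$ and $V\setminus I$ going backward is at most $d\cdot|\{\text{vertices with some back-neighbour in }I\}|$.

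Triangle-freeness enters here. It makes the events $\{u\in I\}$ for $u\in N^-(v)$ nearly independent given $v$'s history, because $N^-(v)$ is an independent set. Then $\Pr(N^-(v)\cap I=\emptyset)\gtrsim \prod_{u}(1-\Pr(u\in I))\ge e^{-dq(1+o(1))}$, where $q$ is a lower bound on the inclusion probability. We want the self-consistent bound $q\ge p\,e^{-dq}$.

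This bound alone is insufficient, since it only controls back-neighbours. The forward neighbours are unbounded in number, which costs a factor $2$. The final analysis handles this with a symmetrisation argument: average over $v$ and the reversed ordering, or equivalently randomise between two independent sets. That yields $q\approx \frac{\log d}{4d}$ rather than $\frac{\log d}{2d}$.

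\textbf{Main obstacle.} The hardest step is making the near-independence rigorous when forward neighbours are unbounded. I would first try a coupling: reveal the process in the order, condition on the past, and use that the neighbourhood of $v$ spans no edges. Combined with the FKG/Harris inequality for decreasing events in $S$, this should lower bound the probability that all back-neighbours are excluded. The loss from the asymmetry between the $\le d$ back-neighbours and the arbitrarily many forward neighbours is what produces the constant $4$.
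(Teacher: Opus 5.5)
There is a genuine gap: your first-stage process cannot reach inclusion probability of order $\frac{\log d}{d}$, and the second stage meant to repair it is never specified.

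\textbf{The key inequality runs the wrong way.} Even granting exact independence of the events $\{u\in I\}$ for $u\in N^-(v)$, a lower bound $q$ on inclusion probabilities gives an \emph{upper} bound on $\prod_u(1-\Pr(u\in I))$, not a lower bound. The only general upper bound is $\Pr(u\in I)\le p$. This yields only $\Pr(v\in I)\gtrsim p(1-p)^d$, which is below $\frac1d$ for every $p$ and about $\frac{\log d}{2d}\,d^{-1/2}$ at your choice of $p$. The bound is sharp. Take $K_{d,N}$ with the $d$-side placed first, which is a valid degeneracy ordering. Each vertex $b$ on the other side then has genuinely independent back-neighbours, and $\Pr(b\in I)=p(1-p)^d$ exactly. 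Even your heuristic fixed point $q=pe^{-dq}$ at $p\approx\frac{\log d}{2d}$ gives only $dq\approx\log\log d$.

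\textbf{The other steps also fail as stated.}
\begin{itemize}
\item Triangle-freeness does not give near-independence. Back-neighbours of $v$ may share many common neighbours via $4$-cycles; this is why the paper notes that second-moment control needs $C_4$-freeness.
\item Harris/FKG does not apply. Under greedy selection, $\{N^-(v)\cap I=\emptyset\}$ is not monotone in $S$: adding a vertex to $S$ can block a back-neighbour of $v$.
\item The second round cannot add a vertex with a back-neighbour in $I$ without deleting that neighbour, and you do not analyse the cost of such deletions.
\item The constant $4$ via ``symmetrisation'' is asserted, not derived. In greedy selection along the ordering, forward neighbours never block $v$, so there is no forward-side factor $2$ to recover.
\end{itemize}

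\textbf{The missing idea is that selection probabilities must adapt.} The paper cites this theorem rather than reproving it. The route it points to uses the martingale weights of Section~\ref{subsection: upper bound overview} (the update rule of Algorithm~\ref{algorithm: iset uncrowded} with $k=2$), analysed by an entropy bound as in Dhawan's work. It runs as follows.
\begin{itemize}
\item Each vertex starts with weight $\alpha$. When $v_i$ is processed, it joins $I$ with probability $p_{i-1}(v_i)$.
\item Each forward neighbour's weight is then set to $0$ if $v_i\in I$, and divided by $1-p_{i-1}(v_i)$ otherwise.
\item Vertices whose weight reaches a cap $\hat p=\alpha^{o(1)}$ are frozen and discarded.
\end{itemize}
Weights are martingales, so with $X=p_{j-1}(v_j)$ one gets $\Pr(v_j\in I)=\alpha-\mathbb{E}[X\,\I[X\ge\hat p]]$.

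In place of independence, one controls $\mathbb{E}[X\log(X/\alpha)]$. Processing a back-neighbour $v_i$ of $v_j$ raises $\mathbb{E}[p_t(v_j)\log p_t(v_j)]$ by at most $(1+o(1))\,\mathbb{E}[p_{i-1}(v_i)\,p_{i-1}(v_j)]$. Triangle-freeness means no vertex is adjacent to both $v_i$ and $v_j$, so $p_t(v_i)p_t(v_j)$ is itself a martingale and this expectation equals $\alpha^2$. Hence $\mathbb{E}[X\log(X/\alpha)]\le(1+o(1))d\alpha^2$. Since $x\log(x/\alpha)-x+\alpha\ge0$ and $\log(\hat p/\alpha)=(1-o(1))\log d$, the capping loss is at most $(1+o(1))d\alpha^2/\log d$.

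Maximizing $\alpha-d\alpha^2/\log d$ at $\alpha=\frac{\log d}{2d}$ gives $\frac{\log d}{4d}$. This quadratic loss, not any forward/backward asymmetry, is the source of the constant $4$.
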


Martinsson and Steiner~\cite[Conjecture~5.2]{martinsson2025random} conjectured that every $d$-degenerate triangle-free graph $G$ satisfies $\chi_f(G)\le(1+o(1))\frac{d}{\log d}$, and that this bound is asymptotically sharp. Allen, Dhawan, and Noel~\cite{high_girth} established the conjectured upper bound under the additional assumption that the girth is at least $5$. Moreover, they established the conjectured lower bound in a stronger form, for graphs of arbitrarily large girth, as follows.

\begin{theorem}[{Allen--Dhawan--Noel~\cite{high_girth}}]\label{theo: girth}
    The following statements hold for every fixed $g \ge 4$ and all sufficiently large $d$.
    \begin{enumerate}[label=\ep{\roman*}]
        \item\label{item: high girth ub graph} Every $d$-degenerate graph $G$ of girth at least $5$ satisfies $\chi_f(G)\le(1+o_d(1))\frac{d}{\log d}$.
        \item\label{item: high girth lb graph} There exists a $d$-degenerate graph $G$ of girth at least $g$ satisfying $\chi_f(G)\ge(1-o_d(1))\frac{d}{\log d}$.
    \end{enumerate}
\end{theorem}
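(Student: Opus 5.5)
Part~\ref{item: high girth ub graph} is proved by designing an iterative random procedure that outputs an independent set $I$ with $\Pr(v\in I)\ge (1-o(1))\log d/d$ for every $v$. By the LP duality remark after~\eqref{eq:chif}, this bound gives the fractional chromatic number bound.

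Fix a degeneracy ordering $v_1,\dots,v_n$ in which each vertex has at most $d$ neighbours among earlier vertices. Process the vertices in reverse order, so that each vertex has at most $d$ \emph{later} neighbours, and these have already been decided when the vertex is reached. Every vertex keeps a "survival" probability $p_v$ that it is included, conditioned on its later neighbours. The natural rule is Shearer/Martinsson--Steiner style: include $v$ only if none of its later neighbours is in $I$, and in that case add $v$ with a tuned probability. The key quantity is
\[
q_v=\Pr\bigl(\text{no later neighbour of } v \text{ lies in } I\bigr).
\]
Girth at least $5$ is used here. The later neighbours of $v$ are pairwise nonadjacent and share no common neighbour other than $v$. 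We therefore expect, by a correlation or FKG-type argument, that their events are nearly independent given the history. This would give $q_v\approx\prod_{u}(1-\Pr(u\in I))$.

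To make every vertex land with probability $\approx\log d/d$ exactly, the plan is to set the marginal target $x=(1-\varepsilon)\log d/d$ for all vertices. We then accept a free vertex with probability $x/q_v$, which requires $q_v\ge x$. With $d$ later neighbours, each in $I$ with probability $x$, we get
\[
q_v\approx (1-x)^d\approx d^{-(1-\varepsilon)}\gg x,
\]
so the scheme is feasible. An extra random "activation" layer can be added if the correlations need smoothing.

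The main obstacle is justifying the near-independence, because the later neighbours' inclusions depend on their own later neighbourhoods, which may overlap deeper in the ordering. My first attempt is to prove by induction along the order a lower bound $\Pr(\text{all } u\in S\notin I)\ge\prod_{u\in S}(1-x)$ for any set $S$ of vertices no two within distance $2$ of one another. This uses girth $\ge5$ to decouple the neighbourhoods at the first step. A Harris-inequality argument on the monotone structure of the process (inclusion events are decreasing in the inclusion of neighbours) should yield the correct direction of correlation.

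For part~\ref{item: high girth lb graph}, take a random $d$-degenerate graph built by uniform attachment: vertex $i$ picks $d$ uniformly random earlier neighbours. Put harmonic weights $w_i\propto 1/i$ on the vertices. By LP duality, $\chi_f\ge \sum_i w_i/\max_{I}\sum_{i\in I}w_i$, so it suffices to show that every independent set has weighted size at most $(1+o(1))\frac{\log d}{d}\sum w_i$. We bound this with a first-moment count over independent sets by weight profile across dyadic blocks of indices. Short cycles are then removed: we show that the expected total weight of vertices on cycles of length less than $g$ is negligible and delete those vertices. The weighted first-moment estimate, via a matrix-norm bound on the block interaction, is the hard step there.
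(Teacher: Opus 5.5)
\textbf{Part (i) has a genuine gap,} at exactly the step you flag as the main obstacle.

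First, a slip in the ordering. The at most $d$ already-decided neighbours of $v_i$ lie among $v_1,\dots,v_{i-1}$, so you must process in the degeneracy order itself. In reverse order, the decided neighbours $v_{i+1},\dots,v_n$ can be arbitrarily many.

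The substantive problem is $q_v$. Your rule accepts a free $v$ with probability $x/q_v$, so both well-definedness and the marginal guarantee require $q_v\ge x$. That means controlling the correlations among the exclusion events of up to $d$ earlier neighbours, propagated through the whole history.

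The lemma you propose for this is false for your process: $\Pr(S\cap I=\emptyset)\ge(1-x)^{|S|}$ for $S$ with pairwise distances at least $3$ in the part processed so far. Take the path with edges $au_1,ab,bu_2$, processed as $a,u_1,b,u_2$. Then
$\Pr(u_1,u_2\in I)=(1-x)\cdot\frac{x}{1-x}\cdot\frac{1-2x}{1-x}\cdot\frac{x}{1-x}=\frac{x^2(1-2x)}{(1-x)^2}<x^2$,
so $\Pr(u_1,u_2\notin I)<(1-x)^2$. Adding $v$ adjacent to $u_1,u_2$ gives a $5$-cycle, so girth $5$ does not exclude this. The deficit here is tiny, but it shows your inequality cannot be the inductive invariant.

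Harris/FKG cannot rescue it. Influence alternates in sign along paths: inclusion is discouraged by neighbours, hence encouraged by vertices at distance $2$. In a non-bipartite graph there is no sign-consistent monotone structure to exploit. The slack between $(1-x)^d\approx d^{-(1-\varepsilon)}$ and $x$ is only about $d^{\varepsilon}/\log d$. So you would need a quantitative bound showing that the accumulated correlations cost at most a subpolynomial factor, and the proposal gives none.

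The paper avoids $q_v$ altogether; part (i) is the case $k=2$ of Theorem~\ref{thm: degenerate}\ref{item: degenerate ub}. Algorithm~\ref{algorithm: iset uncrowded} runs in degeneracy order. At each earlier neighbour $v_i$ of $v_j$, it sets $p_i(v_j)=0$ if $v_i\in I$, and otherwise divides by the \emph{conditional} exclusion probability $1-p_{i-1}(v_i)$. Hence $\mathbb{E}[p_{j-1}(v_j)]=\alpha$ holds exactly by Lemma~\ref{lemma: potential}, with no correlation inequality. The only loss comes from vertices whose weight exceeds the cap $\alpha^{\eta}$. This is bounded by a second-moment supermartingale, in which girth $5$ makes the sets $N_L(u)$, $u\in N_L(v_j)\cup\{v_j\}$, pairwise disjoint, giving $\mathbb{E}[p_{j-1}(v_j)^2]\le\alpha^2e^{(1+\eta)d\alpha}$.

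\textbf{Part (ii) has the right architecture} (uniform attachment, weak duality, a first moment, deletion of short cycles). However, your weighting differs from the paper's, and the key estimate is left undone.

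The paper uses the harmonic \emph{tail} $\ell_n(i)=\sum_{j\ge i}1/j\approx\log(n/i)$, not $1/i$. With $w_i\propto 1/i$, every dyadic block carries equal weight, and a union bound over whole sets diverges. Consider a set that is overweight on $[1,\sqrt n]$ but has density $\frac{\log d}{2d}$ on the rest. It meets the weight threshold, yet the sub-threshold tail alone contributes a factor $e^{\Theta(n\log^2 d/d)}$ to the first moment. You would have to localize to suitable prefixes and union bound over profiles, which you do not do.

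With $\ell_n$, Proposition~\ref{prop:harmonic-energy} gives $\sum_{j\in S}|S\cap[j]|/j\ge(\ell_n(S)^2+|S|^2)/(2n)\ge\ell_n(S)|S|/n$. Any $S$ with $\ell_n(S)\ge(1+\varepsilon)\frac{\log d}{d}n$ is then large enough that its entropy is at most $(1+\varepsilon/2)|S|\log d$. Meanwhile the exponent in $\Pr(S\text{ independent})$ is at least $(1+3\varepsilon/4)|S|\log d$, so a single union bound suffices.

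Finally, the matrix-norm estimate plays no role in the first moment. Schur's test gives $\|A\|_2\le 4$ for $A_{ij}=1/\max\{i,j\}$. This yields $\mathbb{E}[C_q]\le 2\cdot 4^{q-2}d^qH_n$ (Lemma~\ref{lem:short-cycles} with $k=2$), and hence that the deleted weight is negligible — a bound your sketch asserts without proof.
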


In this paper, we extend both parts of Theorem~\ref{theo: girth} to
$k$-uniform hypergraphs for all $k \ge 3$. Our proofs build on the methods of Allen, Dhawan, and Noel~\cite{high_girth}: for the upper bound, we introduce a simpler iterative procedure for sampling independent sets; for the lower bound, we simplify the short-cycle analysis using a matrix-norm estimate.

For $\ell\ge 2$, an \emph{$\ell$-cycle} is a hypergraph with edge set $\{e_0,e_1,\dots,e_{\ell-1}\}$ containing distinct \emph{core vertices} $v_1,\dots,v_\ell$ such that $v_i\in e_i\cap e_{i+1}$ for every $1 \le i \le \ell$, with subscripts taken modulo~$\ell$. The \emph{girth} of a hypergraph is the length of its shortest cycle. A hypergraph is \emph{linear} if it contains no $2$-cycles, and, following Ajtai et al.~\cite{AKPSS1982}, it is called \emph{uncrowded} if it has girth at least~$5$, or equivalently, if it contains no $\ell$-cycles for $2\leq \ell\leq 4$.

The Lov\'asz Local Lemma implies that every $k$-uniform hypergraph $\cH$ of maximum degree $\Delta$ satisfies $\chi(\cH) = O\left(\Delta^{\frac{1}{k-1}}\right)$. A large body of literature seeks to improve this local lemma bound under structural constraints.
In 2013, Frieze and Mubayi~\cite{frieze2013coloring} extended Johansson's bound on triangle-free graphs to hypergraphs, proving that every $k$-uniform hypergraph $\cH$ with maximum degree at most $\Delta$ and girth at least $4$ satisfies
\begin{equation}\label{eq:FM}
\chi(\cH) \leq (c + o_\Delta(1)) {\left(\frac{\Delta}{\log \Delta}\right)^{\frac{1}{k-1}}},
\end{equation}
for a constant $c > 0$ depending only on $k$. Analogous bounds in the broader setting of triangle-free hypergraphs were obtained by Cooper and Mubayi~\cite{cooper2015list} for $3$-graphs and by Li and Postle~\cite{lipostle2022} for arbitrary uniformities $k\ge 3$. Here, a \emph{triangle} is a collection of three distinct edges $e,f,g$ and three distinct vertices $u,v,w$ such that $u\in e\cap f$, $v\in f\cap g$, $w\in e\cap g$, and $\{u, v, w\}\cap e \cap f \cap g = \varnothing$.

While the order of magnitude in~\eqref{eq:FM} is optimal (see, e.g.,~\cite{bennett2024chromatic}), determining the optimal leading constant has remained a major open problem, even for uncrowded hypergraphs. In 2021, Iliopoulos~\cite{iliopoulos2021improved} showed that one can take $c=k-1$ in~\eqref{eq:FM}, extending Kim's bound~\eqref{eq:kim} to all $k$-uniform uncrowded hypergraphs for every $k \ge 3$. Very recently, Dhawan, Methuku, and Vo~\cite{DMV26coloring} and, independently, Yu and Zhang~\cite{yu2026list} strengthened this bound by improving the leading constant to $c = (k-1)^{\frac{1}{k-1}}$, matching the well-studied \textit{shattering threshold} for independent sets and colorings of sparse random hypergraphs~\cite{Achlioptas, Zdeborova, ayre2019hypergraph, ayre2019rigid}. More specifically, their results imply the following.

\begin{theorem}[{Dhawan--Methuku--Vo~\cite{DMV26coloring}; Yu--Zhang~\cite{yu2026list}}]\label{theo: chromatic number bound}
Let $k \ge 2$ be an integer. Every $k$-uniform uncrowded hypergraph $\cH$ of maximum degree at most $\Delta$ satisfies 
\[
\chi(\cH) \le (1+o_\Delta(1)) {\left((k-1) \, \frac{\Delta}{\log\Delta}\right)^{\frac{1}{k-1}}}.
\]
\end{theorem}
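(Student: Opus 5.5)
We would prove Theorem~\ref{theo: chromatic number bound} with a semi-random (nibble) coloring procedure in the style of Kim and Molloy, adapted to $k$-uniform uncrowded hypergraphs. We run it as a list coloring argument. Set $q=(1+\varepsilon)\left((k-1)\Delta/\log\Delta\right)^{1/k-1}$ and give every vertex the list $L(v)=[q]$. For each vertex $v$ and color $c\in L(v)$ we track two quantities: the list size $\ell(v)=|L(v)|$, and the color degrees $d_{c,j}(v)$. The latter counts edges containing $v$ that currently have $j$ uncolored vertices, all other vertices colored $c$, and $c$ still available at each uncolored vertex. Only the edges with $j=k-1$ threaten to become monochromatic with $v$.

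In each round, every uncolored vertex is activated with a small probability $\eta$ and, if activated, picks a uniform color from its list. Each color $c$ is then removed from $L(u)$ whenever some edge through $u$ has all its other vertices colored or tentatively colored $c$. Conflicts are resolved by uncoloring, and an equalizing coin-flip keeps the survival probability of each color exactly uniform.

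The uncrowded condition (no $2$-, $3$- or $4$-cycles) is used at two points:
\begin{itemize}
\item the neighbourhoods of distinct neighbours of $v$ are disjoint, so the events deleting different colors from $L(v)$, and the events killing different edges counted in $d_{c,j}(v)$, are nearly independent;
\item one can therefore compute expectations to first order and derive deterministic recursions.
\end{itemize}
The recursions say that $\ell$ decays like $\ell\,(1-\eta\,\kappa)$, where $\kappa\approx \sum_j d_{c,j}\,\ell^{-j}$ is the "color heat". The degrees $d_{c,j}$ shrink according to the probability that an uncolored vertex becomes colored $c$ or loses $c$.

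The key computation is to solve the resulting differential equations and show that, for $q$ as above, the ratio of $d_{c,k-1}(v)$ to $\ell(v)^{k-1}$ eventually falls below a small constant while $\ell(v)$ is still large. Heuristically, after time $t$ the lists have size $q e^{-\Theta(t)}$, while the $(k-1)$-degrees decay like $\Delta e^{-(k-1)t}$ up to corrections. Matching the two at the shattering threshold produces exactly the constant $(k-1)^{1/(k-1)}$. Once the ratio is small, we finish with the Lovász Local Lemma in its list form for hypergraphs, i.e. a Haxell-type or Local Lemma bound $\ell\ge C\,(\max_c d_{c})^{1/(k-1)}$.

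Concentration in each round is established using Talagrand's inequality or Kim--Vu polynomial concentration, together with the Local Lemma to fix good outcomes everywhere simultaneously. This needs only a bounded dependency range, which holds because each round's outcome at $v$ depends on vertices within distance two.

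The main obstacle is the concentration of the color degrees $d_{c,j}(v)$ for intermediate $j$ together with the exact leading constant. These are polynomial-type quantities involving $j$ uncolored vertices per edge, and their variance depends on codegrees. Here the absence of short cycles is essential to make them Lipschitz, and we expect the $k\ge3$ analysis to need a careful multi-type recursion. Rather than an exact solution, we would first try to control the aggregated quantity $\sum_j d_{c,j}\ell^{-j}$ via a supermartingale-style argument.
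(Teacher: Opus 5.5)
The paper does not prove this theorem. It is quoted as background from Dhawan--Methuku--Vo and Yu--Zhang, and the paper's own results concern $\chi_f$ and do not imply it, so there is no in-paper argument to compare with. Judged on its own terms, your proposal has a genuine gap. It outlines a Kim-type nibble, but the one step that distinguishes this theorem from earlier work is asserted rather than proved, and the heuristic you give for that step is incorrect.

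\textbf{The leading constant is not derived.} The bound $O\bigl((\Delta/\log\Delta)^{1/(k-1)}\bigr)$ is due to Frieze--Mubayi and the constant $k-1$ to Iliopoulos; the content of the statement is the constant $(k-1)^{1/(k-1)}$. Measure time in units of $1/\eta$ rounds. Suppose uncolored vertices get colored at rate about $1$ with uniform color marginals, which is what the equalizing flips are for. Then a vertex is colored $c$ by time $t$ with probability about $(1-e^{-t})/q$, and by uncrowdedness $\Pr(c\in L(v))\approx\exp\bigl(-\Delta(1-e^{-t})^{k-1}/q^{k-1}\bigr)$.

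So lists do not decay like $qe^{-\Theta(t)}$; they level off at about $q\exp(-\Delta/q^{k-1})$. Meanwhile $d_{c,k-1}/\ell^{k-1}\approx\Delta e^{-(k-1)t}/q^{k-1}$ tends to $0$ for every $q$. Hence ``matching'' these two quantities pins down nothing. The threshold actually comes from requiring the limiting list size to be large, i.e.\ $\Delta/q^{k-1}\le(1-\varepsilon')\log q\approx(1-\varepsilon')\frac{\log\Delta}{k-1}$.

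Because this total hazard is of order $\log\Delta$, any constant-factor error in the per-round rate of color deletion moves the constant. With your rule, deleting $c$ because of tentative colorings that are later undone already causes a relative error of order $\eta\log\Delta$. You therefore need $\eta\log\Delta\to0$. Since $t$ must reach about $\log\log\Delta$ (see below), this means $\omega(\log\Delta\log\log\Delta)$ rounds. Every tracked quantity must be concentrated in every round, and the accumulated error must stay a $1+o(1)$ factor in the exponent. The paper illustrates this sensitivity in the fractional setting: letting the weight of the last vertex of an edge change before that edge's penultimate vertex is processed inflates the exponent of the second-moment bound by a factor $k-1$ and costs exactly $(k-1)^{1/(k-1)}$. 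The multi-type recursion and its concentration, which you explicitly leave open, are the proof.

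\textbf{The endgame is misstated.} Take an edge through $v$ whose other vertices are $j$ uncolored vertices with $c$ available and $k-1-j$ vertices already colored $c$. It becomes monochromatic if $v$ and those $j$ vertices all receive $c$. Such partially $c$-colored edges with small $j$ are the most dangerous, contrary to your remark that only $j=k-1$ matters.

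The final Local Lemma step therefore needs something like $\sum_{j=1}^{k-1}d_{c,j}(v)\,\ell(v)^{-j}\le 1/C$ for all $v$ and $c\in L(v)$, not merely $\ell^{k-1}\ge C\max_c d_{c,k-1}$. The $j=0$ term vanishes by your deletion rule. Heuristically $d_{c,j}/\ell^{j}\approx\binom{k-1}{j}u^{j}(1-u)^{k-1-j}\Delta/q^{k-1}$, where $u=e^{-t}$ is the uncolored fraction, so the $j=1$ term is about $u\log\Delta$. You must run until $u=o(1/\log\Delta)$, i.e.\ $t\ge\log\log\Delta+\omega(1)$, while $\ell$ is still large (about $q\exp(-\Delta/q^{k-1})=\Delta^{\Omega(1)}$ above the threshold). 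This is exactly where the choice of $q$ is tested. Nothing in your outline shows that the aggregate you propose to track actually becomes that small.

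A smaller point: uncrowdedness does not make the neighbourhoods of all distinct neighbours of $v$ disjoint, since two neighbours in a common edge share the rest of that edge. It only gives that neighbours lying in different edges through $v$ have no common neighbour other than $v$.
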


We note that replacing maximum degree $\Delta$ with degeneracy $d$ causes the local lemma bound on $\chi(\cH)$ to fail, as Wood~\cite{wood2014hypergraph} constructed $k$-uniform $d$-degenerate hypergraphs with $\chi(\cH) = d + 1$. For fractional colorings, however, a simple application of the alteration method yields the bound $\chi_f(\cH) \le \left(\frac{k}{k-1}\right)\left(kd\right)^{\frac{1}{k-1}}$~\cite[Fact~1]{dhawan2026fractional}. Recently, Dhawan~\cite{dhawan2026fractional} showed that this bound can be substantially improved under local sparsity constraints. Focusing on hypergraphs of girth at least $4$, he extended the bound~\eqref{eq:FM} of Frieze and Mubayi by replacing the maximum degree with the degeneracy:

\begin{theorem}[{Dhawan~\cite{dhawan2026fractional}}]\label{theo: hypergraph girth 4}
Let $k \ge 2$ be an integer.
Every $k$-uniform $d$-degenerate hypergraph $\cH$ of girth at least $4$ satisfies
\[
\chi_f(\cH) \le (1+o_d(1))\left(\frac{k}{k-1}\right)\left(k(k-1)^2\,\frac{d}{\log d}\right)^{\frac{1}{k-1}}.
\]
\end{theorem}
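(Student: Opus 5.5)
We prove the theorem with a random sampling procedure along the degeneracy ordering. By LP duality, it suffices to show that for every weight function $w\colon V(\cH)\to\R_{\ge 0}$ there is an independent set $I$ with $w(I)\ge w(V)/q$, where $q=(1+o_d(1))\frac{k}{k-1}\bigl(k(k-1)^2\frac{d}{\log d}\bigr)^{\frac{1}{k-1}}$. Equivalently, we construct a random independent set $I$ with $\Pr(v\in I)\ge 1/q$ for every vertex $v$. Fix an ordering $v_1,\dots,v_n$ in which each $v_i$ lies in at most $d$ edges all of whose other vertices precede it; we call these the \emph{back-edges} of $v_i$.

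We process the vertices in this order. Each vertex $v_i$ is first \emph{activated} independently with probability $p$. An activated $v_i$ is then added to $I$ unless some back-edge $e$ of $v_i$ has $e\setminus\{v_i\}\subseteq I$, that is, unless adding $v_i$ would complete an edge. The resulting $I$ is independent by construction.

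The main estimate is a lower bound on the probability that no back-edge of $v$ is blocked, conditioned appropriately. For a single back-edge $e$, blocking requires the $k-1$ earlier vertices of $e$ all to be in $I$. Each of these has probability at most $p$ of lying in $I$, so the crude bound on blocking is $p^{k-1}$ per edge. A union bound over the $d$ back-edges then gives $\Pr(v\in I)\ge p(1-dp^{k-1})$, which only recovers the alteration bound of \cite{dhawan2026fractional}.

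To gain the $\log d$ factor, we use girth at least $4$. The sets $e\setminus\{v\}$ over the back-edges $e$ of $v$ are pairwise disjoint by linearity. The absence of $3$-cycles means that vertices in different back-edges of $v$ share no edges among the earlier vertices. This gives approximate independence of the events "$e\setminus\{v\}\subseteq I$", so that
\[
\Pr(v\in I)\approx p\prod_e\bigl(1-\Pr(e\setminus\{v\}\subseteq I)\bigr).
\]
The missing ingredient is a self-consistent bound showing that $\Pr(u\in I)$ is typically $p\cdot\theta$ with $\theta$ bounded away from $1$. We plan to use a two-round (nibble-type) variant. In the first round we activate with probability $p_1$. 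In the second round we reactivate only the "safe" vertices, those whose back-edges are far from being completed, and we also use weights on vertices according to how many of their earlier neighbors are active.

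Choosing $p^{k-1}\approx \frac{\log d}{k(k-1)^2 d}$ yields an expected blocking count per vertex of about $\frac{\log d}{k(k-1)^2}$ times a correction. The final factor $\frac{k}{k-1}$ comes from optimizing $p\,(1-p^{k-1}D)$-type expressions, where $D$ is the effective degree. We then verify $\Pr(v\in I)\ge(1-o(1))/q$ uniformly.

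The main obstacle is controlling correlations in the product over back-edges. Whether the earlier vertices of distinct back-edges lie in $I$ depends on their own histories, and these histories can overlap through longer paths. With girth only $4$, such $4$-cycles exist. We expect to handle this via Harris/FKG-type inequalities: the event $v\in I$ is decreasing in the inclusion of earlier vertices, and the events $\{e\setminus\{v\}\subseteq I\}$ are increasing, so positive correlation among them gives the product as a lower bound on the probability that none occurs. This is exactly where the argument must be checked carefully, since membership in $I$ is not monotone in the activations.
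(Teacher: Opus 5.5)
There is a genuine gap, and it lies in the design of the procedure, not only in the correlation analysis you flag at the end. A process in which every vertex is activated with the same probability $p$ and kept unless it completes an edge gains no $\log d$ factor on $d$-degenerate hypergraphs, even when all relevant events are exactly independent. Your argument fixes an arbitrary degeneracy ordering, so consider the following input. Take a vertex $v$ with $d$ back-edges that pairwise meet only in $v$. Let the other $d(k-1)$ vertices of these edges be sources placed before $v$; this hypergraph has no cycles at all. Each source lies in $I$ exactly when it is activated, so for every choice of $p$,
\[
\Pr(v\in I)=p\bigl(1-p^{k-1}\bigr)^{d}\le p\,e^{-dp^{k-1}}\le \bigl(e(k-1)d\bigr)^{-\frac{1}{k-1}}.
\]
Here your product formula is an identity. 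So neither a Harris/FKG argument nor girth can help; Harris/FKG would in any case need monotonicity, which membership in $I$ lacks. Your self-consistency step, that earlier vertices satisfy $\Pr(u\in I)=p\theta$ with $\theta$ bounded away from $1$, is a maximum-degree heuristic and fails under degeneracy, since sources have $\theta=1$. Suppose you want $\Pr(v\in I)\ge x$ with $x^{k-1}$ slightly below $\frac{\log d}{(k-1)d}$. The sources should then be selected with probability not much more than $x$. Hence $v$ must be selected with probability about $x\,e^{dx^{k-1}}\gg x$, conditional on none of its back-edges being full. In other words, each vertex's selection probability must adapt to the realized outcomes on its back-edges.

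That adaptivity is the missing idea. Your two-round variant with weights ``according to how many earlier neighbors are active'' is too unspecified to supply it. In Dhawan's argument, and in Algorithm~\ref{algorithm: iset uncrowded} of this paper, each vertex carries a weight $p_i(v)$ that serves as its selection probability. Processing an earlier vertex of a back-edge of $v_j$ scales $p_i(v_j)$ down or up according to the outcome, with $p_i(v_j)$ dropping to $0$ once that edge would be completed by $v_j$. The scaling is chosen so that $\mathbb{E}[p_i(v_j)\mid\mathcal{F}_{i-1}]=p_{i-1}(v_j)$. Hence $\mathbb{E}[p_{j-1}(v_j)]=\alpha$ exactly, and the only loss comes from weights that cross prescribed thresholds and are frozen as bad (Dhawan's version also uses equalizing coin flips). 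Girth enters in showing that $X=p_{j-1}(v_j)$ rarely becomes bad, not in decoupling blocking events. With girth only $4$, the second moment of $X$ need not be small, so Dhawan instead lower-bounds the expected entropy $\mathbb{E}[-X\log X]$. The constants $k(k-1)^2$ and $\frac{k}{k-1}$ in the statement come out of that update rule and analysis. In your sketch they are chosen to match the target rather than derived.
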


The bound in Theorem~\ref{theo: hypergraph girth 4} has the correct order of magnitude, but is not asymptotically optimal, even when $k = 2$. For every fixed integer $k \ge 2$, we establish an asymptotically optimal bound on the fractional chromatic number of $k$-uniform uncrowded hypergraphs in terms of their degeneracy. Our main result, Theorem~\ref{thm: degenerate}, extends Theorem~\ref{theo: girth} from graphs to hypergraphs. Furthermore, we establish a local version in which the requirement at each vertex depends directly on its individual degree, in the spirit of the fractional coloring framework with local demands introduced by Kelly and Postle~\cite{kelly2024fractional}; see Theorem~\ref{thm: local demand}.

\Needspace{6\baselineskip}
\subsection{Main results}\label{subsection: main results}

In this section, we formally state our main results.

\subsubsection*{Uncrowded hypergraphs.}
For every fixed integer $k \ge 2$, we asymptotically 
determine the largest possible fractional chromatic number of a $k$-uniform $d$-degenerate uncrowded hypergraph as $d\to\infty$.

\Needspace{7\baselineskip}
\begin{theorem}\label{thm: degenerate}
Let $k \ge 2$ be an integer, and let $\eps>0$ be arbitrary. The following statements hold.
\begin{enumerate}[label=\ep{\roman*}]
\item\label{item: degenerate ub} There exists $d_0 \in \N$ such that, for every $d \ge d_0$ and $n \in \N$, every $n$-vertex $k$-uniform $d$-degenerate uncrowded hypergraph $\cH$ satisfies
    \[
    \chi_f(\cH) \le (1+\eps) {\left((k-1)\, \frac{d}{\log d}\right)^{\frac{1}{k-1}}}.
    \]
    Moreover, there is a $\mathrm{poly}(n, d)$-time algorithm that samples an independent set $I \subseteq V(\cH)$ such that, for each $v \in V(\cH)$,
    \[
    \pr(v\in I) \ge (1-\eps) {\left(\frac{1}{k-1} \, \frac{\log d}{d}\right)^{\frac{1}{k-1}}}.
    \]

\item\label{item: degenerate lb} For every $g \ge 2$, there exists $d_0 \in \N$ such that the following holds for all $d \ge d_0$. There exists $n_0 \in \N$ such that for every $n \ge n_0$, there exists an $n$-vertex $k$-uniform $d$-degenerate hypergraph $\cH$ of girth at least $g$ satisfying
    \[
    \chi_f(\cH) \ge \left(1-\eps\right)\left((k-1) \, \frac{d}{\log d}\right)^{\frac{1}{k-1}}.
    \]
\end{enumerate}
\end{theorem}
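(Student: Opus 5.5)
For part \ref{item: degenerate ub}, I will build the independent set by a sequential random procedure along the degeneracy order. Fix an ordering $v_1,\dots,v_n$ such that every $v_i$ lies in at most $d$ edges whose other vertices all precede $v_i$; call these the \emph{back-edges} of $v_i$. Process the vertices in this order. Each $v_i$ is \emph{activated} independently with a probability $p_i$, to be chosen. We put $v_i$ into $I$ if it is activated and no back-edge $e$ of $v_i$ has $e\setminus\{v_i\}\subseteq I$. The output is independent by construction. I aim for $p_i$ chosen adaptively, so that $\Pr(v_i\in I)$ equals exactly a target value $q=(1-\eps)\bigl(\frac{1}{k-1}\frac{\log d}{d}\bigr)^{1/(k-1)}$. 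Concretely, set $p_i=q/\Pr(v_i\text{ unblocked})$, where unblocked means no back-edge is fully occupied by the earlier vertices. For the algorithmic claim, this probability is estimated by sampling, which introduces only polynomial overhead and an $o(1)$ error. The whole argument then reduces to showing $\Pr(v_i\text{ unblocked})\ge q$, so that $p_i\le 1$.

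To show $p_i\le1$, I will use the uncrowded hypothesis. The sets $e\setminus\{v_i\}$ for the back-edges $e$ of $v_i$ are pairwise disjoint, since the hypergraph is linear. Moreover, girth at least $5$ should give approximate independence among the events $\{e\setminus\{v_i\}\subseteq I\}$. The plan is to establish a correlation inequality of the form
\[
\Pr\Bigl(\bigcap_{e}\{e\setminus\{v_i\}\not\subseteq I\}\Bigr)\gtrsim \prod_e\bigl(1-q^{k-1}\bigr)\ge \exp\bigl(-(1+o(1))\,d\,q^{k-1}\bigr)=d^{-(1-\eps)^{k-1}/(k-1)+o(1)}.
\]
Here each $e\setminus\{v_i\}$ is fully occupied with probability about $q^{k-1}$, because every vertex lies in $I$ with probability $q$ and these events are nearly independent. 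The threshold then comes out right: we need $d^{-(1-\eps)^{k-1}/(k-1)}\ge q\approx d^{-1/(k-1)}$ up to logarithmic factors, which holds because $(1-\eps)^{k-1}<1$. I expect this correlation step to be the main obstacle, because the decisions about vertices in different back-edges interact through their own back-edges. My first attempt would be to condition on the history and use a Harris/FKG-type argument, showing that the events "$u\in I$" for $u$ in distinct back-edges are at worst mildly positively correlated once cycles of length at most $4$ are excluded. If that fails, I will instead cap each $p_i$ at a truncated value and run an induction that controls the joint inclusion probabilities of $(k-1)$-sets.

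For part \ref{item: degenerate lb}, I take a hypergraph uniform attachment model. Vertex $v_i$ arrives and forms $d$ edges, each consisting of $v_i$ together with $k-1$ vertices chosen uniformly from the earlier ones; this is $d$-degenerate by construction. To lower-bound $\chi_f$, I use LP duality with harmonic weights $w(v_i)\propto 1/i$. It then suffices to show that, with high probability, every independent set $I$ has $\sum_{v_i\in I}1/i\le(1+\eps)\bigl(\frac{1}{k-1}\frac{\log d}{d}\bigr)^{1/(k-1)}\sum_i 1/i$. I will prove this by a first-moment count over dyadic scales. On the block $[N,2N)$, an independent set of relative density $\beta$ above the threshold survives roughly $\exp(-\beta^{k}dN/\dots)$ attachment constraints, and this beats the $\binom{N}{\beta N}$ choices once $\beta^{k-1}>(1+\eps)\frac{\log d}{(k-1)d}$. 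Summing over the scales gives the weighted bound. Finally, I delete one vertex from each cycle of length less than $g$. The expected number of short cycles through vertices with index near $i$ contributes weight $\sum_i O(d^{g}/i^{2})$, up to logarithmic factors. This is negligible relative to $\log n$, and I plan to verify it by bounding the appropriate powers of the attachment kernel matrix in operator norm. Deletion preserves degeneracy, and it reduces the total weight only by a $(1-o(1))$ factor. Taking $n$ large then yields the stated bound.
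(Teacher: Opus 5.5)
Both parts have a genuine gap, and both follow routes different from the paper's.

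For part~(i), everything rests on your correlation inequality for $\Pr(v_i\text{ unblocked})$. That is exactly the step you have not supplied, and the tools you name do not give it. With activation probabilities fixed in advance, the events $\{e\setminus\{v_i\}\subseteq I\}$ depend on the whole history: through the other back-edges of the vertices of $e\setminus\{v_i\}$, and through cycles of length at least $5$. So even the single-edge estimate $\Pr(e\setminus\{v_i\}\subseteq I)\approx q^{k-1}$ is unjustified. The events $\{u\in I\}$ are also not monotone in the activation coins: activating an earlier vertex can block an intermediate vertex and thereby unblock a later one. Hence Harris/FKG does not apply. The required precision is also polynomial. Independence would give $d^{-(1-\varepsilon)^{k-1}/(k-1)}$, while you need at least $q=d^{-1/(k-1)+o(1)}$. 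So a $d$-fold intersection must be controlled to within a factor $d^{O(\varepsilon)}$, which pairwise correlation bounds cannot provide. The missing idea is adaptivity. In the paper, the weight of $v_j$ changes only when the penultimate vertex $v_i$ of a back-edge $e$ of $v_j$ is processed and $e_{<i}\subseteq I$. It is then set to $0$ if $v_i\in I$, and divided by $1-p_{i-1}(v_i)$ otherwise. This makes $p_i(v_j)$ a martingale, so $\mathbb{E}[p_{j-1}(v_j)]=\alpha$ with no correlation control at all. The only loss comes from freezing vertices whose weight reaches $\alpha^{\eta}$. That loss is bounded via $\mathbb{E}[p_{j-1}(v_j)^2]\le\alpha^2\exp\bigl((1+\eta)d\alpha^{k-1}\bigr)$, which follows from the supermartingale $S_i=p_i(v_j)^2\prod_\ell A_{\ell,i}$. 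Uncrowdedness is used only to make the possible roles of $v_i$ in that step mutually exclusive. This second-moment bound is the rigorous substitute for your product heuristic.

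For part~(ii), the per-block first-moment count is wrong as stated. A vertex $j\in[N,2N)$ is constrained only by $I\cap[j-1]$. If $I$ is sparse before $N$ and has density $\beta$ on $[N,2N)$, the relevant ratio is about $\beta(j-N)/j$, not $\beta$. The cost exponent therefore drops by a constant factor; for $k=2$ it is $(1-\log 2)\,d\beta^2N$. So the count on a single block does not exclude densities a constant factor above $a_{k,d}$. This is exactly the bias the paper stresses: $U^{(k)}_{n,d}$ has independent sets well above size $na_{k,d}$, concentrated on late labels. Summing per-block bounds therefore cannot give the weighted bound; you need an estimate that couples the scales. The paper uses weights $\ell_n(i)=\sum_{j\ge i}1/j$ rather than $1/i$. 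It bounds $\Pr(S\text{ independent})\le\exp\bigl(-d\sum_{j\in S}\binom{|S\cap[j-1]|}{k-1}/\binom{j}{k-1}\bigr)$ and applies Jensen. It then uses $\sum_{j\in S}|S\cap[j]|/j\ge(\ell_n(S)^2+|S|^2)/(2n)$, which allows a single global union bound over $S$. If you keep weights $1/i$, you would at least have to apply first-moment bounds to every prefix $[T]$ and average across scales through the running density $|I\cap[j]|/j$. Your cycle-deletion step is fine. With total weight only $H_n$, the trace bound $\mathbb{E}[C_q]=O(d^qH_n)$ alone would not suffice. But the localized bound $(A^q)_{vv}\le\|A\|_2^{q-2}\sum_w A_{vw}^2\le 2\cdot 4^{q-2}/v$ gives expected deleted weight $O_{k,q}(d^q)$, which matches your $\sum_i O(d^{g}/i^2)$.
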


We note that Theorem~\ref{thm: degenerate} shows that
\[
\chi_f(\mathcal{H}) = (1+o(1)) \left((k-1)\frac{d}{\log d}\right)^{\frac{1}{k-1}}
\]
holds even for hypergraphs $\mathcal{H}$ of arbitrarily large fixed girth.

For the upper bound, our proof draws on two earlier arguments: the random independent set procedure from Dhawan's proof of Theorem~\ref{theo: hypergraph girth 4}~\cite{dhawan2026fractional}, and the second-moment analysis from Allen, Dhawan, and Noel's proof of Theorem~\ref{theo: girth}\ref{item: high girth ub graph}~\cite{high_girth}. However, combining these approaches directly does not give an asymptotically optimal bound when $k \ge 3$. We therefore introduce a new random sampling procedure. It is simpler than the one in~\cite{dhawan2026fractional} and yields an asymptotically optimal bound. We give an overview of the proof in Section~\ref{subsection: upper bound overview}.

The argument underlying the proof of Theorem~\ref{thm: degenerate}\ref{item: degenerate ub} extends beyond uncrowded hypergraphs. More precisely, with straightforward modifications, it applies to any $k$-uniform linear hypergraph that either (1) contains no $4$-cycles,\footnote{For graphs, Allen, Dhawan, and Noel establish this extension~\cite[Theorem~1.3]{high_girth}.} or (2) admits a degeneracy ordering that avoids certain configurations of $3$- and $4$-cycles. To keep the exposition of the proof ideas simple, we do not present the proof in this generality. Combining the extension in~(2) with a random sampling technique in the spirit of Duke, Lefmann, and R\"odl~\cite{duke1995uncrowded} to remove the forbidden configurations yields a previously unknown upper bound on the fractional chromatic number of all $d$-degenerate $k$-uniform linear hypergraphs with $k\ge3$ (see Corollary~\ref{corollary: degenerate linear}). We discuss this further in Section~\ref{section: concluding remarks}.

For the lower bound, we use a hypergraph variant of the uniform attachment model and harmonic vertex weights to give a lower bound on the fractional chromatic number via linear programming duality. We then remove all short cycles by deleting a small set of vertices with negligible total weight, preserving the asymptotically sharp lower bound. Unlike in graphs, the core vertices of a cycle do not determine its hyperedges, so we must sum over these additional choices. We bound their contributions and reduce the expected cycle count to a matrix trace. We then bound this trace using a matrix-norm estimate obtained via Schur's test, giving a simpler alternative to the Catalan-number argument in~\cite[proof of Lemma~3.3]{high_girth}. We give an overview in Section~\ref{subsection: lower bound overview}.

We remark that the hypergraphs constructed in Theorem~\ref{thm: degenerate}\ref{item: degenerate lb} possess independent sets strictly larger than $n/\chi_f(\cH)$. In particular, the lower bound in Theorem~\ref{thm: degenerate}\ref{item: degenerate lb} does not follow directly from a corresponding upper bound on $\alpha(\cH)$, and the degeneracy constraint plays an essential role in exhibiting this gap. We expand on this distinction in Section~\ref{section: proof overview}.

\subsubsection*{Fractional colorings with local demands.}
Motivated by applications to degree-sequence bounds on the independence number, Kelly and Postle~\cite{kelly2024fractional} introduced the theory of \emph{fractional coloring with local demands}, in which one seeks a probability distribution on independent sets such that each vertex $v$ is included with probability at least its prescribed demand $\phi(v)$. These demands may depend on local properties such as vertex degrees. More formally, we recall the following definition from~\cite[Definition~1.7]{dhawan2026fractional}.

\begin{definition}[$\phi$-colorings]
    Given a hypergraph $\cH$, a \emph{demand function} for $\cH$ is a function $\phi \colon V(\cH) \to [0, 1]$. A \emph{$\phi$-coloring} of $\cH$ is a probability distribution $\mathcal{D}$ over the independent sets of $\cH$ such that $\Pr_{I \sim \mathcal{D}}(v\in I) \ge \phi(v)$ for every vertex $v \in V(\cH)$.
\end{definition}

Our proof of Theorem~\ref{thm: degenerate}\ref{item: degenerate ub} naturally extends to fractional colorings with local demands; see Section~\ref{section: proof overview}.

\begin{theorem}\label{thm: local demand}
Let $k \ge 2$ be an integer. There exists a nonincreasing convex function $f_k\colon[0,\infty)\to[0,1]$ satisfying
\[
f_k(x)=(1-o_x(1))\left(\frac{1}{k-1}\,\frac{\log x}{x}\right)^{\frac{1}{k-1}}
\]
such that the following holds for every $n\in\N$. Let $\cH$ be an $n$-vertex $k$-uniform uncrowded hypergraph. Suppose there is a demand function $\phi$ for $\cH$ such that, for every $v\in V(\cH)$,
\[
\phi(v)\le f_k\left(\deg(v)\right).
\]
Then $\cH$ admits a $\phi$-coloring. Moreover, there is a $\mathrm{poly}(n)$-time algorithm that samples an independent set $I \subseteq V(\cH)$ such that, for each $v \in V(\cH)$, $\pr(v\in I) \ge \phi(v)$.
\end{theorem}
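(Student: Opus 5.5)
We will prove Theorem~\ref{thm: local demand} by giving a randomized procedure that builds the independent set one vertex at a time in a fixed order, and showing that in an uncrowded hypergraph each vertex survives with probability at least $f_k(\deg(v))$. Since the demands now depend on the individual degrees $\deg(v)$ rather than on the degeneracy, we do not need a degeneracy ordering. Instead we use a random (or arbitrary) ordering together with a local analysis. Concretely, we fix a parameter function $p(x)$ with
\[
p(x) \approx \left(\frac{1}{k-1}\,\frac{\log x}{x}\right)^{\frac{1}{k-1}},
\]
and give each vertex $v$ its own activation probability $p_v := p(\deg(v))$. We sample a set $A$ by activating each vertex independently with probability $p_v$. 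A vertex $v \in A$ is then kept in $I$ unless some edge $e \ni v$ has all of $e\setminus\{v\}$ activated and kept. We resolve this by the iterative rule described in the overview: process $A$ in a uniformly random order, and add a vertex if doing so creates no fully-included edge.

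The analysis has two parts. First, we fix $v$ and condition on $v \in A$. The event that $v$ is rejected requires some edge $e\ni v$ whose other $k-1$ vertices are all in $I$ before $v$ is processed. Because $\cH$ is uncrowded, the second neighbourhood of $v$ is tree-like: distinct edges through $v$ share only $v$, and edges hitting different vertices of $e\setminus\{v\}$ produce no short cycles. This lets us estimate the probability that a neighbour $u$ is itself retained as roughly $(1-p^{k-1})^{\deg(u)}$-type factors, and these factors decorrelate across the edges at $v$. A first-moment (Janson-type) computation then gives
\[
\Pr(v\in I\mid v\in A)\ \ge\ \exp\bigl(-(1+o(1))\deg(v)\, p^{k-1}/(k-1)\bigr)\cdot(1-o(1)).
\]
The factor $1/(k-1)$ arises from the random order, since $v$ must come after all $k-1$ companions. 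With $x=\deg(v)$ and the choice of $p$, this probability is about $x^{-1/(k-1)+o(1)}$.

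That is too lossy, so the second part is where the real work lies. We rescale: take $p$ so that $x p^{k-1}\approx (1-\delta)\log x$ after accounting for the factor $1/(k-1)$, and use the second-moment argument (as in Allen--Dhawan--Noel) to show that the number of "blocking" edges at $v$ is concentrated. The main obstacle is that neighbours $u$ may have degrees very different from $\deg(v)$, so $p_u$ differs from $p_v$. Large-degree neighbours have small $p_u$ and are harmless. Small-degree neighbours have large $p_u$ and could block $v$ often. To handle these, we use that $f_k$ is nonincreasing and convex: the blocking probability through an edge $e$ is bounded by $\prod_{u\in e\setminus v}p_u\cdot(\text{survival})$, and convexity and monotonicity let us reduce, by a Jensen-type inequality, to the worst case in which all degrees equal $\deg(v)$. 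I expect this degree-mismatch step to be the hardest. If needed, I would modify the procedure by capping $p_u\le p(\deg(v))$ for edges at $v$, that is, by using $\min$ over the edge, to force the reduction.

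Finally, we define $f_k(x)$ as the resulting lower bound $p(x)\Pr(v\in I\mid v\in A)$, minus lower-order terms. If necessary we replace it by its lower convex nonincreasing envelope, which preserves the stated asymptotics. For vertices with $\phi(v)<\Pr(v\in I)$, we subsample: keep $v$ independently with probability $\phi(v)/\Pr(v\in I)$. The required probabilities are computable, or can be lower-bounded explicitly, so the algorithm runs in polynomial time, which gives the algorithmic statement.
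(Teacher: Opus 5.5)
There is a genuine gap. Independent activation followed by greedy resolution in a uniformly random order cannot meet these demands for any choice of activation probabilities, and neither the second-moment step nor the Jensen/capping step repairs this. The reason is that a vertex blocked by a completed edge is simply lost.

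Take a sunflower whose center $v$ has degree $x$ and whose petals consist of degree-one vertices; it is uncrowded. Set $\phi(u)=f_k(1)>0$ on the petals and $\phi(v)=f_k(x)$. Since $\Pr(u\in I)\le p_u$, every petal needs $p_u\ge f_k(1)$. Realize the random order by i.i.d.\ uniform arrival times. A petal arriving before $v$ is always kept, so given that $v$ arrives at time $s$, each of its $x$ edges independently blocks it with probability at least $(f_k(1)s)^{k-1}$. Hence
\[
\Pr(v\in I)\le\int_0^1\bigl(1-(f_k(1)\,s)^{k-1}\bigr)^x\,ds\le\frac{\Gamma\bigl(\frac{k}{k-1}\bigr)}{f_k(1)}\,x^{-\frac{1}{k-1}}=o\bigl(f_k(x)\bigr).
\]
So the worst case is low-degree neighbors, not equal degrees, and no Jensen-type reduction exists; convexity of $f_k$ only matters for the average-degree corollary. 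Capping the petals' activation at $p(x)$ violates their own demands and still leaves the center at $\Theta(x^{-1/(k-1)})$. Concentration of the number of blocking edges is useless when you need that number to be zero while its mean is large.

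Even for regular tree-like hypergraphs the scheme falls short. Activating at rate $p\asymp(\log d/d)^{1/(k-1)}$ is the random greedy process stopped at time $p$. The heuristic $q'(t)\approx e^{-d\,q(t)^{k-1}}$ for the accepted density then gives only about $\bigl(\frac{\log\log d}{(k-1)d}\bigr)^{1/(k-1)}$ at $t=p$.

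Two minor points: $v$ is last in an edge with probability $1/k$, not $1/(k-1)$; and no subsampling is needed, since a $\phi$-coloring only requires $\Pr(v\in I)\ge\phi(v)$.

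The paper's proof supplies the two ingredients your scheme lacks: a demand-based order and compensation.

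\emph{Order.} It processes vertices in nondecreasing order of $\phi$. In the sunflower, this puts the center first, so it is never blocked.

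\emph{Compensation.} Each $v$ starts with weight $\alpha_v=\phi(v)/(1-2\eta/k)$, and $v_i$ is included with probability equal to its current weight. The weight of $v_j$ changes only when the penultimate vertex $v_i$ of an edge $e$ ending at $v_j$ is processed with $e_{<i}\subseteq I$. It becomes $0$ if $v_i\in I$ and is divided by $1-p_{i-1}(v_i)$ otherwise. Hence $p_i(v_j)$ is a martingale and $\mathbb{E}[p_{j-1}(v_j)]=\alpha_{v_j}$ exactly. Potential completions of edges at $v_j$ are paid for by weight boosts rather than lost.

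The only loss comes from vertices whose weight reaches the threshold $\alpha_{v_j}^{\eta}$; these are frozen and never added. This loss is controlled by the supermartingale $S_i$, whose case analysis is where uncrowdedness is used. It yields $\mathbb{E}[p_{j-1}(v_j)^2]\le\alpha_{v_j}^2\exp\bigl((1+\eta)\,d^*(v_j)\,\alpha_{v_j}^{k-1}\bigr)$, where $d^*(v_j)$ is the back-degree.

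Degree mismatch never arises. Only edges in which $v_j$ is last affect $v_j$, and their other vertices precede $v_j$, so $\alpha_u\le\alpha_{v_j}$. The vanishing error then comes from the vertex-dependent choice $\eta_i=c_k/\sqrt{\log(1/\phi(v_i))}$ and $f_k(x)=h_k(x+C)$, where $h_k(x)=\bigl(1-3/\sqrt{\log x}\bigr)\bigl(\frac{1}{k-1}\frac{\log x}{x}\bigr)^{1/(k-1)}$.
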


Theorem~\ref{thm: local demand} resolves a conjecture of Yu and Zhang~\cite[Conjecture~4.3]{yu2026independent}.

\subsubsection*{Degree-sequence bounds.}
Setting $\phi(v)=f_k(\deg(v))$, the maximum demand allowed by Theorem~\ref{thm: local demand}, and taking expectations gives $\alpha(\cH)\ge\sum_{v\in V(\cH)}f_k(\deg(v))$. The asymptotic behavior of $f_k$ then yields the following immediate corollary:

\begin{corollary}\label{corollary: local iset}
    Let $k \ge 2$ be an integer, and let $\eps>0$ be arbitrary. There exists $\delta_0 \in \N$ such that the following holds for every $\delta \ge \delta_0$ and $n \in \N$. Let $\cH$ be an $n$-vertex $k$-uniform uncrowded hypergraph of minimum degree $\delta$. Then
    \[
    \alpha(\cH) \ge (1-\eps)\sum_{v \in V(\cH)} {\left(\frac{1}{k-1} \, \frac{\log \deg(v)}{\deg(v)}\right)^{\frac{1}{k-1}}}.
    \]
\end{corollary}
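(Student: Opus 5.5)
The plan is to apply Theorem~\ref{thm: local demand} with the largest admissible demand and then average. Let $f_k$ be the nonincreasing convex function given by Theorem~\ref{thm: local demand}, and set $\phi(v) := f_k(\deg(v))$ for every $v \in V(\cH)$. Since $f_k$ takes values in $[0,1]$, this $\phi$ is a valid demand function, and it satisfies $\phi(v) \le f_k(\deg(v))$ trivially. Theorem~\ref{thm: local demand} therefore gives a $\phi$-coloring $\mathcal{D}$ of $\cH$. Taking $I \sim \mathcal{D}$, linearity of expectation yields
\[
\alpha(\cH) \ge \mathbb{E}|I| = \sum_{v \in V(\cH)} \Pr(v \in I) \ge \sum_{v \in V(\cH)} f_k(\deg(v)).
\]
Some outcome of $I$ is an independent set of size at least its expectation, which justifies the first inequality.

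The remaining step is to turn $f_k$ into the explicit expression. By the asymptotic statement $f_k(x) = (1-o_x(1))\left(\frac{1}{k-1}\frac{\log x}{x}\right)^{\frac{1}{k-1}}$, for the given $\eps>0$ there is $x_0$ such that
\[
f_k(x) \ge (1-\eps)\left(\frac{1}{k-1}\,\frac{\log x}{x}\right)^{\frac{1}{k-1}} \quad \text{for all } x \ge x_0.
\]
We take $\delta_0 := \max\{x_0, 2\}$, rounded up to an integer. The lower bound $2$ keeps $\log x$ positive, so the right-hand side is well defined and nonnegative. If $\cH$ has minimum degree $\delta \ge \delta_0$, then every vertex satisfies $\deg(v) \ge \delta \ge x_0$. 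Applying the display termwise and summing over $v \in V(\cH)$ gives the stated bound.

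There is no genuine obstacle here, since all the difficulty lies in Theorem~\ref{thm: local demand}. The only point needing care is that the $o_x(1)$ term is uniform in $x$. This is exactly what the asymptotic statement provides, because $f_k$ is a single fixed function depending only on $k$. This uniformity is what allows the minimum-degree hypothesis to make the bound hold at every vertex simultaneously, with $\delta_0$ depending only on $k$ and $\eps$.
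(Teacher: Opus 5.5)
Your proposal is correct and is essentially the paper's argument: set $\phi(v)=f_k(\deg(v))$ in Theorem~\ref{thm: local demand}, take expectations to get $\alpha(\cH)\ge\sum_{v}f_k(\deg(v))$, and use the asymptotics of $f_k$ to choose $\delta_0$. What you call uniformity is simply that $f_k$ is a fixed function depending only on $k$, so the threshold $x_0$ depends only on $k$ and $\eps$ and not on $\cH$.
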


Degree-sequence lower bounds on the independence number date back to Caro and Wei~\cite{caro1979new}, who established a degree-sequence analog of the classic greedy bound for graphs~\eqref{eq:greedy}. The first asymptotic improvement was proved in a seminal 1991 paper by Shearer~\cite{shearer1991note}, who established a degree-sequence bound valid for every triangle-free graph $G$. In its asymptotic form, this bound states that, as the minimum degree $\delta(G)$ tends to infinity,
\begin{equation}\label{eq: shearer}
    \alpha(G) \ge (1-o(1))\sum_{v\in V(G)}\frac{\log \deg(v)}{\deg(v)}.
\end{equation}
Shearer's proof introduced a technique now known as Shearer-style induction. Li, Rousseau, and Zang~\cite{li2002lower} extended this approach to locally sparse graphs—where the neighborhood of each vertex contains relatively few edges—and subsequent work established related results for broader graph classes under stronger minimum-degree assumptions (e.g., $\delta(G) \ge \mathsf{polylog}(\Delta(G))$~\cite{DKPS, bonamy2022bounding}). However, extending this technique from graphs to hypergraphs requires fundamentally new ideas. For $k \ge 3$, Corollary~\ref{corollary: local iset} provides the first result that matches the leading constant in~\eqref{eq: shearer} for any regime of $\delta(\cH)$, albeit under the assumption of uncrowdedness rather than girth at least $4$.

Finally, since $f_k$ is convex on $[0,\infty)$, Theorem~\ref{thm: local demand} and Jensen's inequality imply that
\begin{align*}
\alpha(\cH)&\ge\sum_{v\in V(\cH)}f_k\left(\deg(v)\right)\ge n f_k\left(\bar{d}\right) =(1-o_{\bar{d}}(1))\,n \,\left(\frac{1}{k-1}\,\frac{\log \bar{d}}{\bar{d}}\right)^{\frac{1}{k-1}},
\end{align*}
for every $n$-vertex $k$-uniform uncrowded hypergraph $\cH$ of average degree $\bar{d} \to \infty$.
As a consequence, this recovers the average-degree bound on the independence number of uncrowded hypergraphs obtained by Dhawan, Methuku, and Vo~\cite[Corollary~1.7]{DMV26ind} and, independently, Yu and Zhang~\cite[Theorem~1.1]{yu2026independent} via a substantially simplified argument.

\noindent \textbf{Note added.}
At the time of submission, we learned that Yu and Zhang~\cite{yu2026trianglefree} had independently proved the upper bound in Theorem~\ref{thm: degenerate}\ref{item: degenerate ub} and the local demand result in Theorem~\ref{thm: local demand}. Their proof uses a different approach, based on independent set sampling in the style of Martinsson and Steiner~\cite{martinsson2025random}, whereas our work builds on (and simplifies) the methods of Dhawan~\cite{dhawan2026fractional} and Allen, Dhawan, and Noel~\cite{high_girth}. Their results also apply to linear hypergraphs with no $4$-cycles, and our argument extends to this setting with minor modifications. For simplicity of exposition, we state our upper bounds for uncrowded hypergraphs and discuss the broader setting after the statement of Theorem~\ref{thm: degenerate}.

Beyond these upper bounds, the present paper contains several results not obtained in~\cite{yu2026trianglefree}. We prove the matching lower bound for hypergraphs of arbitrarily large fixed girth, and we derive consequences for $d$-degenerate linear hypergraphs. For the latter, we use the fact that our analysis only requires excluding certain configurations of $3$- and $4$-cycles relative to a degeneracy ordering. Combining this weaker requirement with random sampling and deletion yields Corollary~\ref{corollary: degenerate linear}. These consequences do not follow directly from the bound for linear hypergraphs with no $4$-cycles.

\subsection{Notation and terminology}\label{subsection: notation}

Throughout the rest of the paper, we use the following basic notation. For $n \in \N$, we let $[n] \defeq \set{1, \ldots, n}$.
For a hypergraph $\cH$, its vertex and edge sets are denoted by $V(\cH)$ and $E(\cH)$, respectively.
For a vertex $v \in V(\cH)$, $\deg_{\cH}(v) \coloneqq |\set{e \in E(\cH)\,:\, v \in e}|$ denotes the degree of $v$; we drop the subscript $\cH$ when the context is clear.
We say $u \neq v$ is a neighbor of $v$ if $\set{u , v} \subseteq e$ for some edge $e\in E(\cH)$.
For a subset $U \subseteq V(\cH)$, the subhypergraph induced by $U$ is denoted by $\cH[U]$.

We say an $n$-vertex hypergraph $\cH$ is \emph{$d$-degenerate} for $d\in \N$ if there exists an ordering $(v_1, \ldots, v_n)$ of $V(\cH)$ such that for each $i\in [n]$, we have $\deg_{\cH_i}(v_i) \leq d$, where $\cH_i \coloneqq \cH[\{v_1, \ldots, v_i\}]$.
Fix an ordering $(v_1,\ldots,v_n)$ of $V(\cH)$.  
For an edge $e \in E(\cH)$ and integers $0 \le i < j \le n+1$, define $e_{(i,j)}\coloneqq e\cap\{v_{i+1},\ldots,v_{j-1}\}$.
For each $i \in [n]$, we use the shorthand $e_{<i}$, $e_{\le i}$, $e_{>i}$, and $e_{\ge i}$ for the corresponding special cases: $e_{<i}\coloneqq e_{(0,i)}$, $e_{\le i}\coloneqq e_{(0,i+1)}$, $e_{>i}\coloneqq e_{(i,n+1)}$, and $e_{\ge i}\coloneqq e_{(i-1,n+1)}$.
If $\cH$ is linear, then any two vertices $v_i$ and $v_j$ in $\cH$ are contained in at most one edge. Thus, whenever such an edge exists, it is unique, and we denote it by $e^{ij}$.

For each $i\in[n]$, we define the \emph{right-neighborhood} of $v_i$, denoted by $N_R(v_i)$, to be the set of all vertices $v_j$ with $j > i$ such that there exists an edge $e \in E(\cH)$ with $e_{\ge i}=\{v_i,v_j\}$. 
In this case, $v_i$ and $v_j$ are, respectively, the \emph{penultimate} and \emph{final} vertices of $e$. We refer to the vertices in $N_R(v_i)$ as the \emph{right-neighbors} of $v_i$. Similarly, for each $i\in[n]$, we define the \emph{left-neighborhood} of $v_i$, denoted by $N_L(v_i)$, to be the set of all vertices $v_j$ for which $v_i$ is a right-neighbor of $v_j$, that is, $N_L(v_i) \coloneqq \{v_j : v_i\in N_R(v_j)\}$; we also 
refer to the vertices in $N_L(v_i)$ as the \emph{left-neighbors} of $v_i$. For $U\subseteq V(\cH)$, we write $N_R(U)\coloneqq\bigcup_{v\in U}N_R(v)$ and $N_L(U)\coloneqq\bigcup_{v\in U}N_L(v)$.

\subsection{Organization of the paper}\label{subsection: organization}

In Section~\ref{section: proof overview}, we give a sketch of the proof of Theorem~\ref{thm: degenerate}, outlining the main ideas and how our approach differs from earlier methods. In Section~\ref{section: fractional coloring}, we prove the upper bound in Theorem~\ref{thm: degenerate}\ref{item: degenerate ub} and the local-demand result, Theorem~\ref{thm: local demand}, by analyzing our independent set sampling procedure via a second-moment bound. Section~\ref{section: lower bound} proves the matching lower bound in Theorem~\ref{thm: degenerate}\ref{item: degenerate lb}. Finally, Section~\ref{section: concluding remarks} discusses extending the local-demand bound to linear hypergraphs and describes forthcoming work on triangle-free hypergraphs.

\section{Proof overview}\label{section: proof overview}

In this section, we provide an overview of the key ideas behind the proofs of our main results. Our goal is to provide intuition behind our method and highlight how it differs from previous approaches, without focusing on technical details.

\subsection{Upper bound}\label{subsection: upper bound overview}

The proof of Theorem~\ref{thm: degenerate}\ref{item: degenerate ub} builds on the random independent set procedure used in Dhawan's proof of Theorem~\ref{theo: hypergraph girth 4}~\cite{dhawan2026fractional} (inspired by Johansson's seminal argument for list coloring triangle-free graphs~\cite{Joh_triangle}) and the second-moment analysis from Allen, Dhawan, and Noel's proof of Theorem~\ref{theo: girth}\ref{item: high girth ub graph}~\cite{high_girth}.
We begin by reviewing these methods in the graph setting before outlining the key challenges and new ideas required for the hypergraph setting.

Let $G$ be a $d$-degenerate triangle-free graph. The procedure constructs an independent set $I \subseteq V(G)$ such that $\pr(v \in I)$ is large for every vertex $v$, as follows:
\begin{enumerate}
    \item Assign each vertex $v$ an initial weight $p_0(v) = \alpha$ for a suitably chosen parameter $\alpha \in [0, 1]$.
    \item Iterate through the vertices in a degeneracy ordering of $G$. When processing vertex $v_i$, include $v_i$ in $I$ with probability $p_{i-1}(v_i)$.
    \item For each $j > i$, update the weight $p_i(v_j)$ based on whether $v_i \in I$:
    \begin{itemize}
        \item If $v_i \in I$ and $v_j \in N_R(v_i)$, set $p_i(v_j) = 0$; this ensures $I$ remains an independent set.
        \item If $v_i \notin I$ and $v_j \in N_R(v_i)$, increase $p_i(v_j)$ to compensate and boost the probability of selecting $v_j$ later.
    \end{itemize}
\end{enumerate}
The parameter choices ensure the martingale property $\E[p_i(v_j) \mid p_{i-1}(\cdot)] = p_{i-1}(v_j)$ for $1\le i<j\le n$, which immediately implies that $\pr(v_j \in I) = \E[p_{j-1}(v_j)] = \alpha$.

To ensure that all weights remain valid probabilities (i.e., $p_{i-1}(v_i) \le 1$), weights are artificially capped by setting $p_i(v) = \hat p$ whenever $p_i(v) > \hat{p}$ for a well-chosen $\hat{p} \in [0, 1]$, classifying such vertices as \emph{bad}. Bad vertices are never added to $I$. Although capping breaks the martingale property, previous approaches restored it using an \emph{equalizing coin flip} (see~\cite[Ch.~13]{MolloyReed} for thresholding in the context of list-coloring triangle-free graphs).

Accounting for bad vertices yields:
\begin{align}\label{eq: lb on inclusion prob}
    \pr(v_j \in I) = \alpha - \hat{p} \pr(v_j \text{ is bad}).
\end{align}
To show that $\pr(v_j \in I)$ is large, it suffices to bound $\pr(v_j \text{ is bad})$. We do so by studying the random variable $X \coloneqq p_{j-1}(v_j)$. Although $\mathrm{Var}(X)$ is not generally small for triangle-free graphs, it was shown in~\cite{dhawan2026fractional} that the expected entropy $\E[-X \log X]$ is sufficiently large to complete the proof of Theorem~\ref{theo: martinsson steiner}. In~\cite{high_girth}, it was shown that $C_4$-freeness yields the second-moment bound $\E[X^2] \leq (1+o(1))\alpha^2 e^{(1+o(1))d\alpha}$, which gives
\[
\hat{p} \, \pr(v_j \text{ is bad})
= \frac{1}{\hat{p}}\left(\hat{p}^2 \, \pr(v_j \text{ is bad})\right)
\leq \frac{\E[X^2]}{\hat{p}}
\leq \frac{(1+o(1))\alpha^2 e^{(1+o(1))d\alpha}}{\hat{p}}.
\]
Combining this with \eqref{eq: lb on inclusion prob} gives:
\[
\pr(v_j \in I) \geq \alpha\left(1 - \frac{(1+o(1))\alpha e^{(1+o(1))d\alpha}}{\hat{p}}\right).
\]
Setting $\alpha = (1-o(1))\frac{\log d}{d}$ and $\hat{p} = \alpha^{o(1)}$ yields the optimal bound for graphs of girth at least $5$ (Theorem~\ref{theo: girth}).

To prove Theorem~\ref{thm: degenerate}\ref{item: degenerate ub}, we aim to control $\E[X^2]$ under the uncrowdedness assumption, generalizing the approach in~\cite{high_girth}.
In many nibble procedures, including those of Frieze and Mubayi~\cite{frieze2013coloring} and Cooper and Mubayi~\cite{cooper2015list}, uncolored vertices first activate tentative colors independently.
These choices determine which colors remain available: for an uncolored edge $e$, a color $c$ is removed at $v\in e$ if every vertex of $e\setminus\{v\}$ activates $c$, even if these choices never become permanent assignments.
The procedure in~\cite{dhawan2026fractional} uses a similar idea without making explicit tentative selections: if $v_j$ is the last vertex of an edge $e$, the rule for updating $p_i(v_j)$ involves products of the current weights of the other unprocessed vertices of $e$.
Thus the weight at $v_j$ can change before the penultimate vertex of $e$ is processed.
However, a direct extension of this approach does not yield the asymptotically optimal bound in Theorem~\ref{thm: degenerate}\ref{item: degenerate ub}.

Specifically, for a $k$-uniform $d$-degenerate uncrowded hypergraph $H$, directly applying the hypergraph procedure from~\cite{dhawan2026fractional} yields $\E[X^2] \leq (1+o(1))\alpha^2 e^{(1+o(1))(k-1)d\alpha^{k-1}}$. A calculation similar to the one above implies
\[
\chi_f(H) \leq (1+o(1))\left((k-1)^2 \frac{d}{\log d}\right)^{\frac{1}{k-1}},
\]
by setting $\alpha = (1-o(1))\left(\frac{1}{(k-1)^2}\frac{\log d}{d}\right)^{\frac{1}{k-1}}$ and $\hat{p} = \alpha^{o(1)}$. This bound exceeds the optimal result in Theorem~\ref{thm: degenerate}\ref{item: degenerate ub} by a factor of $(k-1)^{\frac{1}{k-1}}$.

We eliminate this extra factor by modifying the update rule from~\cite{dhawan2026fractional}. Let $e \in E(H)$ contain $v_j$, where $v_j$ is the last vertex of $e$ under the degeneracy ordering. In ~\cite{dhawan2026fractional}, processing any earlier vertex of $e$ can change the weight at $v_j$. Our algorithm instead updates $p_i(v_j)$ \textit{only} when processing the penultimate vertex of $e$ (see Algorithm~\ref{algorithm: iset uncrowded}).
At this stage, the selection outcomes of all earlier vertices in $e$ are known; thus, each update reflects a genuine risk that $e$ will be fully included in $I$. The degeneracy ordering naturally enables this delayed update mechanism and its subsequent analysis.

This modification improves the second-moment bound to \[\E[X^2] \leq (1+o(1))\alpha^2 e^{(1+o(1))d\alpha^{k-1}},\] 
from which a calculation similar to the one above yields
the proof of Theorem~\ref{thm: degenerate}\ref{item: degenerate ub} upon setting $\alpha = (1-o(1))\left(\frac{1}{k-1}\frac{\log d}{d}\right)^{\frac{1}{k-1}}$ and $\hat{p} = \alpha^{o(1)}$. 

Notably, our procedure also substantially simplifies the algorithm in~\cite[Algorithm~4.1]{dhawan2026fractional}.
First, in~\cite{dhawan2026fractional}, positive weights may decrease, and the analysis requires them not to fall below a prescribed lower threshold.
Vertices whose weights reach this threshold are declared bad and their weights are frozen; additional equalizing coin flips preserve the expected weights.
In our procedure, a weight can only increase, remain unchanged, or become zero, so every nonzero weight is at least its initial value.
This removes the need for the lower threshold, the associated bad vertices, and the corresponding equalizing coin flips.
Second, while we still employ an upper threshold to define bad vertices, we freeze their weights at their current values rather than truncating them to the threshold.
This avoids the additional coin flips needed to compensate for the change in expected weights caused by truncation.
These simplifications substantially streamline the subsequent second-moment analysis while also yielding a cleaner first-moment analysis.

Finally, we prove our result on fractional coloring with local demands, Theorem~\ref{thm: local demand}, using the same framework. We process vertices in nondecreasing order of demand rather than in a degeneracy ordering. We also allow the threshold for declaring a vertex bad to vary between vertices, so that the relative error at each vertex depends only on its degree.

\subsection{Lower bound}\label{subsection: lower bound overview}
To prove the lower bound in Theorem~\ref{thm: degenerate}\ref{item: degenerate lb}, we adapt the lower-bound strategy of~\cite{high_girth} to hypergraphs.
The basic strategy in~\cite{high_girth} is to use harmonic vertex weights in a uniform attachment model to construct a large fractional clique (the dual problem to fractional coloring; see~\eqref{eq:fracclique}), and then remove short cycles by deleting vertices of negligible total weight. 

Carrying out this strategy in the hypergraph setting introduces two main difficulties. First, bounding the probability that a set is independent requires counting $(k-1)$-subsets of earlier vertices.
We combine Jensen's inequality with the harmonic-weight estimate from~\cite{high_girth} to control these counts without losing the sharp leading constant.
Second, the core vertices of a cycle---one distinct vertex chosen from each intersection of consecutive hyperedges---do not determine its hyperedges.
We sum over the possible hyperedges according to their largest vertices, reducing the expected cycle count to a matrix trace.
We bound this trace using Schur's test, giving a simpler alternative to the Catalan-number argument in~\cite{high_girth}.

We now explain the argument in more detail, highlighting the challenges that arise and how we overcome them. We use the following variant of the \emph{uniform attachment model} (see, e.g.,~\cite{AcanPittel20, FriezePerezGimenezPralatReiniger19, MalyshkinZhukovskii22} for the graph setting and \cite{wang2010evolving, barthelemy2022class} for hypergraphs).
We begin with the vertex set $\{0,\dots,n\}$.
Independently for each $i\in[n]$, we choose $d$ distinct $(k-1)$-subsets of $\{0,\dots,i-1\}$ uniformly at random, taking all such subsets if fewer than $d$ exist.
For each chosen subset $e'$, we add the hyperedge $e \coloneqq e' \cup \{i\}$.
Finally, we delete vertex $0$ and all hyperedges containing it.
The resulting $k$-uniform hypergraph $\cH \sim U_{n,d}^{(k)}$ has vertex set $[n]=\{1,\dots,n\}$ (see Definition~\ref{def:model} for a formal description). 

In contrast to ordinary hypergraphs, the standard lower bound $\chi_f(\cH) \ge \frac{n}{\alpha(\cH)}$ is insufficient here. Although this random hypergraph contains independent sets well above size $n\left(\frac{\log d}{(k-1)d}\right)^{\frac{1}{k-1}}$, these sets are heavily biased—they are concentrated among the vertices with larger labels, $\{n/2, \dots, n\}$, while containing very few vertices in $\{1, \dots, n/2\}$. Consequently, they cannot be fractionally packed to yield a tight fractional coloring.

To bypass this barrier, we exploit linear programming duality. The dual program to the fractional chromatic number is the \emph{fractional clique number} $\omega_f(\cH)$, defined by the linear program:
\begin{maxi}
    {f \in \R^{|V(\cH)|}}{\sum_{v\in V(\cH)}f_v}{\label{eq:fracclique}}{}
    \addConstraint{\sum_{v\in I}f_v}{\le 1}{\qquad \text{for all } I \in \mathcal{I}(\cH)}
    \addConstraint{f_v}{\ge 0}{\qquad \text{for all } v \in V(\cH).}
\end{maxi}

By strong LP duality, $\omega_f(\cH) = \chi_f(\cH)$ for all hypergraphs $\cH$. Thus, to establish our lower bound on $\chi_f(\cH)$, it suffices by weak duality ($\omega_f(\cH) \le \chi_f(\cH)$) to construct an explicit, deterministic weight assignment $f \colon V(\cH) \to \mathbb{R}_{\ge 0}$ and show that, with high probability, $f$ forms a valid fractional clique in $\cH$ with sufficiently large value.

More formally, to each vertex $i \in [n]$, we assign the weight
\[
f(i) = \frac{\ell_n(i)}{(1+\eps)\,n}\left((k-1)\,\frac{d}{\log d}\right)^{\frac{1}{k-1}}, \qquad \text{where} \quad \ell_n(i) = \sum_{j = i}^n\frac{1}{j} \approx \log\left(\frac{n}{i}\right).
\]
Note that $f(i)$ decreases monotonically in $i$, meaning larger (later) vertices receive smaller weight.
Our proof now proceeds in two key steps:

\begin{enumerate}[wide, label=\textup{Step~\arabic*:}]
    \item \textit{Bounding the probability of a heavy independent set.}
    We show that the probability that there exists an independent set $I$ with $\sum_{i \in I}f(i) > 1$ is $o_n(1)$ (see Lemma~\ref{lem:heavy-independent}); i.e., $f$ is a fractional clique in $\cH$ with high probability.
    Specifically, for any fixed nonempty vertex subset $S \subseteq [n]$ satisfying $\frac{\ell_n(S)^2+|S|^2}{2n}\ge (k-1)H_n$, we establish in Proposition~\ref{prop:fixed-set} that
    \[
    \pr\left(S~\text{forms an independent set in}~\cH\right) \le \exp\left(-d|S|^{2-k} \left(\frac{\ell_n(S)^2+|S|^2}{2n}-(k-1)H_n\right)^{k-1}\right),
    \]
    where $\ell_n(S) = \sum_{i \in S}\ell_n(i)$ and $H_n = \sum_{i = 1}^n\frac{1}{i} \approx \log n$.
    Since $\sum_{i \in S}f(i) > 1$ if and only if $\ell_n(S)$ is large, this bound yields the desired concentration result.

    Proposition~\ref{prop:fixed-set} represents the first technical departure from the proof of Theorem~\ref{theo: girth}\ref{item: high girth lb graph} by Allen, Dhawan, and Noel. In the graph model, whether $S$ remains independent is determined sequentially by its earlier vertices. For $k$-uniform hypergraphs with $k \ge 3$, independence is governed simultaneously by all $(k-1)$-subsets in the initial segments of $S$. We estimate the binomial coefficients counting these subsets and apply Jensen's inequality, reducing the calculation to a lower bound on $\sum_{j\in S}|S\cap[j]|/j$.
    Proposition~\ref{prop:harmonic-energy} bounds this sum below by $(\ell_n(S)^2+|S|^2)/(2n)$, giving the displayed probability bound without losing a constant factor.

    For $g = 2$, the proof is complete at this stage, as every hypergraph has girth at least $2$ and
    \[
    \sum_{i = 1}^nf(i) = \frac{1}{(1+\eps)}\left((k-1)\,\frac{d}{\log d}\right)^{\frac{1}{k-1}} \ge (1 - \eps)\left((k-1)\,\frac{d}{\log d}\right)^{\frac{1}{k-1}}.
    \]

    \item \textit{Removing short cycles while essentially preserving total weight.}
    Restricting $f$ to any subhypergraph $\cH' \coloneqq \cH[U]$ for $U \subseteq V(\cH)$ yields a valid fractional clique in $\cH'$, as any independent set in $\cH'$ remains independent in $\cH$.
    Let $R \subseteq [n]$ denote the set of vertices that serve as the rightmost (i.e., largest) vertex of at least one short cycle in $\cH$, and define $U \coloneqq [n] \setminus R$.
    It remains to show that $\sum_{i \in R} f(i) = o(1)$, which ensures that the total weight removed by deleting $R$ is negligible.
    Since $\sum_{i \in R} f(i) = 
    O(\ell_n(R)/n)$, it suffices to establish that $\ell_n(R) = o(n)$.
    Indeed, using Markov's inequality, we show that this property holds with constant probability by proving that $|R|$ itself is small.

    A key step in the argument bounds the expected number of short cycles: specifically, $\E[C_q] = 
    O(\log n)
    $, where $C_q$ denotes the number of $q$-cycles in $\cH$.
    For a fixed sequence of $q$ vertices $v_1, \ldots, v_q$, the probability that they form a $q$-cycle is roughly proportional to the product of the probabilities that $v_i$ and $v_{i+1}$ (indices taken modulo $q$) share an edge.
    In the graph setting ($k = 2$), this probability is at most $d/\max\{v_i, v_{i+1}\}$.
    For $k$-uniform hypergraphs with $k \ge 3$, the computation becomes substantially more delicate because $v_i$ and $v_{i+1}$ may belong to an edge $e$ whose maximum vertex satisfies $\max e > \max\{v_i, v_{i+1}\}$.
    Nevertheless, a careful combinatorial counting argument yields the upper bound $(k-1)^2 d / \max\{v_i, v_{i+1}\}$ for this adjacency probability, resulting in
    \[
    \E[C_q] \le (k-1)^{2q} d^q \sum_{v_1, \ldots, v_q} \prod_{i = 1}^q \frac{1}{\max\{v_i, v_{i+1}\}}, \qquad \text{where} \quad v_{q+1} \coloneqq v_1.
    \]
    Defining $A$ to be the $n \times n$ matrix with entries $A_{ij} = 1/\max\{i, j\}$, the sum above is precisely the trace $\Tr(A^q)$.
    In \cite{high_girth}, $\Tr(A^q) =
    O(\log n)$ was established via an intricate combinatorial analysis relying on Catalan numbers.
    Here, we present an alternative and considerably simpler proof of this bound via matrix-norm analysis.
\end{enumerate}

\section{Fractional colorings of $d$-degenerate uncrowded hypergraphs}\label{section: fractional coloring}

In this section, we will prove Theorems~\ref{thm: degenerate}\ref{item: degenerate ub} and~\ref{thm: local demand}.
We begin by stating a key result, which shows that such a hypergraph admits a $\phi$-coloring with respect to an appropriate demand function $\phi$.

\begin{proposition}\label{prop: degenerate + local demand}
Let $k \ge 2$ be an integer, and let $\eps>0$ be arbitrary. There exists $\delta_0 \in \N$ such that the following holds for every $\delta \ge \delta_0$ and $n \in \N$.
Let $\cH$ be an $n$-vertex $k$-uniform uncrowded hypergraph with a vertex ordering $(v_1,\dots, v_n)$.
For each $i \in [n]$, let $\cH_i \coloneqq \cH[\{v_1, \ldots, v_i\}]$.
Suppose that there is a demand function $\phi$ for $\cH$ such that:
\begin{enumerate}[(i)]
\item $\phi(v_i)$ is nondecreasing in $i$; and
\item for every $i \in [n]$,
\begin{equation}\label{eq: demand bound}
\phi(v_i) \le \frac{1}{1+\eps} {\left(\frac{1}{k-1} \, \frac{\log d^*(v_i)}{d^*(v_i)}\right)^{\frac{1}{k-1}}},
\end{equation}
where $d^*(v_i) \coloneqq \max\left\{\deg_{\cH_i}(v_i),\,\delta\right\}$.
\end{enumerate}
Then $\cH$ admits a $\phi$-coloring.
Moreover, there is a $\poly(n)$-time algorithm that samples an independent set $I \subseteq V(\cH)$ such that, for each $v \in V(\cH)$, $\pr(v\in I) \ge \phi(v)$.
\end{proposition}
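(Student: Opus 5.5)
We run the sampling procedure from Section~\ref{subsection: upper bound overview} along the given ordering $(v_1,\dots,v_n)$, with vertex-dependent initial weights. Each vertex $v_j$ starts with weight $p_0(v_j)=\alpha_j\coloneqq(1+\eps/2)\phi(v_j)$. Because $\phi$ is nondecreasing, $\alpha_j$ is at most the corresponding quantity with $d^*(v_j)$ in place of $d$. The vertices are processed in order, and $v_i$ is added to $I$ with probability $p_{i-1}(v_i)$ unless $v_i$ is \emph{bad}.

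After processing $v_i$, we update only the final vertices $v_j$ of edges $e$ whose penultimate vertex is $v_i$, i.e., those with $e_{\ge i}=\{v_i,v_j\}$. If all vertices of $e_{\le i}$ lie in $I$, we set $p_i(v_j)=0$. If they all lie in $I$ except that $v_i\notin I$, we multiply $p_i(v_j)$ by $(1-p_{i-1}(v_i))^{-1}$. Otherwise $p_i(v_j)$ is left unchanged. With several such edges we take the product of the factors, or use the equivalent sequential rule.

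Conditioned on the past, this gives $\E[p_i(v_j)]=p_{i-1}(v_j)$ as long as $v_j$ is not bad. A vertex becomes bad once its weight exceeds a threshold $\hat p_j=\alpha_j^{\eta}$ for a small $\eta=\eta(\eps)$. At that moment its weight is frozen at the current value, which is at most $\hat p_j$ times one update factor, and it is never added to $I$. Weights then only increase, stay, or drop to $0$, and the first-moment identity reads
\[
\pr(v_j\in I)\ \ge\ \alpha_j-\E\bigl[X\,\I\{v_j\text{ bad}\}\bigr],\qquad X\coloneqq p_{j-1}(v_j),
\]
and the error term is bounded by $\E[X^2]/\hat p_j$. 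Since vertices are processed in order and each operation is polynomial, the algorithm runs in $\poly(n)$ time, and $I$ is independent: when the last vertex of an edge is processed, either some other vertex of that edge is absent from $I$ or its weight is $0$.

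The heart of the proof is the second-moment bound
\[
\E[X^2]\le(1+o(1))\,\alpha_j^2\exp\bigl((1+o(1))\,\deg_{\cH_j}(v_j)\,\bar\alpha^{k-1}\bigr),
\]
where $\bar\alpha$ bounds the weights $\alpha_i$ for $i<j$; monotonicity of $\phi$ gives $\bar\alpha\le\alpha_j$. We write $X^2=\alpha_j^2\prod_e Y_e^2$ over the edges $e\ni v_j$ in $\cH_j$, where $Y_e$ is the multiplicative factor contributed by $e$. Uncrowdedness is what makes these factors nearly independent. The sets $e\setminus\{v_j\}$ are pairwise disjoint by linearity, and the absence of $3$- and $4$-cycles means that no earlier update links vertices of two different such edges except through long paths.

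We plan to prove by induction on $i$ a bound of the form $\E[Y_e^2\mid\cdots]\le 1+(1+o(1))\,\bar\alpha^{k-1}\cdot(\text{small})$, handling one edge at a time and using the fact that weights of vertices in $e_{<j}$ are themselves martingales whose second moments are controlled. The one-step computation gives $\E[(1-p)^{-2}\I\{\cdot\}]\approx 1+p^{k-1}$-type gains, and summing over edges yields the exponent. \textbf{This step is the main obstacle:} the decorrelation has to be made rigorous, that is, the joint second moments of weights of vertices in distinct edges through $v_j$ must factor up to $1+o(1)$. Our first attempt will be to condition on the sigma-algebra generated by the processing of vertices outside $\bigcup_e e$ and to show, via the girth condition, that the relevant weights are conditionally independent.

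Finally, the parameters are chosen as follows. With $D=d^*(v_j)\ge\delta$, we have $\alpha_j^{k-1}D\le(1+\eps/2)^{k-1}(1+\eps)^{-(k-1)}\frac{\log D}{k-1}$, so
\[
\exp\bigl(D\alpha_j^{k-1}(1+o(1))\bigr)\le D^{(1-\eps')/(k-1)}.
\]
Hence the error term satisfies $\E[X^2]/\hat p_j\le \alpha_j\cdot\alpha_j^{1-\eta}D^{(1-\eps')/(k-1)}=\alpha_j\cdot o_\delta(1)$, since $\alpha_j\approx(\log D/D)^{1/(k-1)}$ and $\eta$ is small relative to $\eps'$. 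This gives $\pr(v_j\in I)\ge(1-o(1))(1+\eps/2)\phi(v_j)\ge\phi(v_j)$ once $\delta_0$ is large. The law of $I$ is then the required $\phi$-coloring.
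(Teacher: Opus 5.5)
Your algorithm, first-moment identity and final parameter calculation essentially agree with the paper's argument. The genuine gap is exactly where you flag it: the bound $\mathbb{E}[X^2]\le\alpha_j^2\exp\bigl((1+o(1))\,d^*(v_j)\,\alpha_j^{k-1}\bigr)$ is the entire content of the proof, and you only sketch a plan for it. That plan also does not work as stated. Uncrowdedness excludes only cycles of length at most $4$. So a vertex $u\in e\setminus\{v_j\}$ can influence the weight of a vertex $w\in f\setminus\{v_j\}$, for distinct edges $e,f$ ending at $v_j$, through a chain $u\to x\to y\to w$ of penultimate-to-final relations. Together with $e$ and $f$ this chain forms a $5$-cycle, which is allowed. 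The coin flips $a_x,a_y$ of these outside vertices depend on $a_u$. Hence conditioning on the processing of everything outside $\bigcup_e e$ tilts the conditional law of $a_u$ away from $\mathrm{BER}(p_{i-1}(u))$. That destroys precisely the martingale structure your one-edge computation needs, and there is no conditional independence to exploit. Even for a single edge, bounding $\mathbb{E}[Y_e^2]$ requires showing that the probability that all of $e\setminus\{v_j\}$ enters $I$ is at most about $\prod_{u\in e\setminus\{v_j\}}\alpha_u$. That is itself a correlation statement you have not addressed.

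The missing idea is to bypass independence with a supermartingale potential. Fix $j$, let $\mathcal{F}_i$ be the history through step $i$, and for $0\le i\le j-1$ set
\[
S_i \coloneqq p_i(v_j)^2\prod_{\substack{i<\ell<j\\ v_\ell\in N_L(v_j)}}\Bigl(1+(1+\eta)\,\I\bigl[e^{\ell j}_{\le i}\subseteq I\bigr]\prod_{u\in e^{\ell j}_{(i,j)}}p_i(u)\Bigr).
\]
Then $S_{j-1}=X^2$, and
\[
S_0=\alpha_j^2\prod_{\ell}\Bigl(1+(1+\eta)\prod_{u\in e^{\ell j}\setminus\{v_j\}}\alpha_u\Bigr)\le\alpha_j^2\exp\bigl((1+\eta)\,d^*(v_j)\,\alpha_j^{k-1}\bigr).
\]
Uncrowdedness shows that each $v_i$ plays at most one of three roles: a left-neighbor of a unique $u\in e^{\ell j}_{(i,j)}$, a vertex of some $e^{\ell j}$ with $\ell>i$, or a left-neighbor of $v_j$. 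Hence at most one factor of $S_{i-1}$ changes at step $i$. In the first two roles, the changed factor does not increase in conditional expectation, because $\mathbb{E}[p_i(u)\mid\mathcal{F}_{i-1}]=p_{i-1}(u)$ and $\Pr(v_i\in I\mid\mathcal{F}_{i-1})=p_{i-1}(v_i)$. In the third role, set $x=\I[e^{ij}_{<i}\subseteq I]\,p_{i-1}(v_i)$. The weight cap gives $x\le\eta/(1+\eta)$, so deterministically $p_i(v_j)\le p_{i-1}(v_j)/(1-x)\le p_{i-1}(v_j)\bigl(1+(1+\eta)x\bigr)$. Taking expectations gives $\mathbb{E}[p_i(v_j)^2\mid\mathcal{F}_{i-1}]\le p_{i-1}(v_j)^2(1+(1+\eta)x)$. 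This growth is exactly cancelled by the factor for $\ell=i$, which leaves the product. Thus $\mathbb{E}[X^2]\le S_0$ with no factorization needed.

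A smaller point: as you state the update rule, a bad $v_i$ is never in $I$. Whenever $e^{ij}_{<i}\subseteq I$, it would then deterministically multiply its right-neighbor's weight by $(1-p_{i-1}(v_i))^{-1}$, breaking $\mathbb{E}[p_i(v_j)\mid\mathcal{F}_{i-1}]=p_{i-1}(v_j)$. You need bad vertices to trigger no updates, as in Step~\ref{step: bad skip}.
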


Before proving Proposition~\ref{prop: degenerate + local demand}, we first show how it implies Theorem~\ref{thm: degenerate}\ref{item: degenerate ub} and a weaker version of Theorem~\ref{thm: local demand}.

\begin{proof}[Proof of Theorem~\ref{thm: degenerate}\ref{item: degenerate ub}] Consider a degeneracy ordering $(v_1,\dots,v_n)$ of $\cH$. Set $\delta=d$, and consider the demand function $\phi$ for $\cH$ given by
\[
\phi(v) = \frac{1}{1+\eps}{\left(\frac{1}{k-1} \,\frac{\log d}{d}\right)^{\frac{1}{k-1}}},
\]
for every $v\in V(\cH)$. The hypotheses of Proposition~\ref{prop: degenerate + local demand} are then satisfied, and Proposition~\ref{prop: degenerate + local demand} yields Theorem~\ref{thm: degenerate}\ref{item: degenerate ub}.
\end{proof}

\begin{corollary}\label{thm: local demand eps}
    Let $k \ge 2$ be an integer, and let $\eps>0$ be arbitrary. There exists $\delta_0 \in \N$ such that the following holds for every $\delta \geq \delta_0$ and $n \in \N$. Let $\cH$ be an $n$-vertex $k$-uniform uncrowded hypergraph of minimum degree $\delta$. Suppose there is a demand function $\phi$ for $\cH$ such that, for each $v \in V(\cH)$,
    \[
    \phi(v) \leq (1-\eps) {\left(\frac{1}{k-1} \, \frac{\log \deg(v)}{\deg(v)}\right)^{\frac{1}{k-1}}}.
    \]
    Then $\cH$ admits a $\phi$-coloring. Moreover, there is a $\mathrm{poly}(n)$-time algorithm that samples an independent set $I \subseteq V(\cH)$ satisfying $\Pr[v\in I] \geq \phi(v)$ for each $v \in V(\cH)$.
\end{corollary}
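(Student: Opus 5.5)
We derive the corollary from Proposition~\ref{prop: degenerate + local demand}, applied with $\eps'$ in place of $\eps$, where $\eps'$ is chosen so that $(1-\eps)\le \frac{1}{1+\eps'}$. For instance $\eps' = \eps$ works, since $(1-\eps)(1+\eps)\le 1$. Let $\delta_0$ be the threshold the proposition provides for $\eps'$; enlarging it if necessary, we may also assume that $x\mapsto \left(\frac{1}{k-1}\frac{\log x}{x}\right)^{\frac{1}{k-1}}$ is nonincreasing on $[\delta_0,\infty)$ (this holds once $\delta_0\ge e$). Given $\cH$ with minimum degree $\delta\ge\delta_0$ and a demand $\phi$ as in the statement, the task is to choose a vertex ordering satisfying hypotheses (i) and (ii) of the proposition.

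\textbf{Choice of ordering.} Order the vertices so that $\phi(v_i)$ is nondecreasing in $i$, breaking ties arbitrarily. Hypothesis (i) then holds automatically, which is why the overview says vertices are processed in nondecreasing order of demand.

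\textbf{Verifying (ii).} For each $i$ we have $\deg_{\cH_i}(v_i)\le \deg_{\cH}(v_i)$, and also $\delta\le \deg(v_i)$ by the minimum-degree assumption. Hence $d^*(v_i)=\max\{\deg_{\cH_i}(v_i),\delta\}\le \deg(v_i)$, and both quantities lie in $[\delta_0,\infty)$. Because the function above is nonincreasing there,
\[
\phi(v_i)\le (1-\eps)\left(\frac{1}{k-1}\,\frac{\log \deg(v_i)}{\deg(v_i)}\right)^{\frac{1}{k-1}}\le \frac{1}{1+\eps}\left(\frac{1}{k-1}\,\frac{\log d^*(v_i)}{d^*(v_i)}\right)^{\frac{1}{k-1}},
\]
which is exactly~\eqref{eq: demand bound}.

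\textbf{Conclusion.} The proposition now yields a $\phi$-coloring together with a polynomial-time sampler. Sorting the vertices by demand costs only $O(n\log n)$ time, so the overall algorithm remains polynomial. The only point needing care is the monotonicity of $\log x/x$: it fails below $e$, and that is what forces $\delta_0$ to be large. The minimum-degree hypothesis is used precisely so that replacing $\deg_{\cH_i}(v_i)$ by $d^*(v_i)$ never takes us above $\deg(v_i)$.
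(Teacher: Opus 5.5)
Your proposal is correct and matches the paper's proof: you order the vertices by nondecreasing demand, use the minimum-degree assumption to get $d^*(v_i)\le\deg(v_i)$, and combine the eventual monotonicity of $\left(\frac{\log t}{t}\right)^{1/(k-1)}$ with $1-\eps\le 1/(1+\eps)$ to verify~\eqref{eq: demand bound}, then apply Proposition~\ref{prop: degenerate + local demand}.
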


\begin{proof}
Consider an ordering $(v_1,\dots,v_n)$ of $V(\cH)$ such that $\phi(v_1)\le\cdots\le\phi(v_n)$, and set $\delta=\delta(\cH)$. Since $\deg_{\cH_i}(v_i) \le \deg(v_i)$, it follows that $d^*(v_i) = \max\{\deg_{\cH_i}(v_i),\,\delta\} \le \deg(v_i)$ for every $v_i \in V(\cH)$.
As $t \mapsto \left((\log t)/ t\right)^{\frac{1}{k-1}}$ is decreasing for $t$ sufficiently large, and $1-\eps < 1/(1+\eps)$, the demand condition in Corollary~\ref{thm: local demand eps} implies the corresponding condition in Proposition~\ref{prop: degenerate + local demand}.
Hence, Proposition~\ref{prop: degenerate + local demand} yields Corollary~\ref{thm: local demand eps}.
\end{proof}

The proof of Theorem~\ref{thm: local demand} uses essentially the same argument as the proof of Corollary~\ref{thm: local demand eps}. However, since the error in Theorem~\ref{thm: local demand}  tends to zero as the degree grows, it requires a stronger version of Proposition~\ref{prop: degenerate + local demand} with a degree-dependent error. For ease of exposition, we first prove the proposition as stated and then return to the stronger version and Theorem~\ref{thm: local demand} in Section~\ref{subsection: local demand prop}.

\subsection{Proof of Proposition~\ref{prop: degenerate + local demand}} 
Fix $k \ge 2$ and $\eps > 0$. Without loss of generality, we may assume that $\eps < 1$. Set $\eta \coloneqq \eps/10$. Choose $\delta_0$ sufficiently large in terms of $k$ and $\eps$, and fix $\delta \ge \delta_0$. 

Let $\cH$ be an $n$-vertex $k$-uniform uncrowded hypergraph with a vertex ordering $(v_1,\dots,v_n)$ and a demand function $\phi$ satisfying the hypotheses of Proposition~\ref{prop: degenerate + local demand}. Note that $d^*(v_i) \ge \delta \ge \delta_0$ for every $i \in [n]$; by choosing $\delta_0$ sufficiently large, we may ensure that all subsequent inequalities requiring $d^*(v_i)$ to be large are satisfied. We will use this without further comment.

We begin with an informal description of our independent set procedure (see Algorithm~\ref{algorithm: iset uncrowded} for the formal statement).
The algorithm takes as input an $n$-vertex $k$-uniform uncrowded hypergraph $\cH$, a demand function $\phi$ for $V(\cH)$, a vertex ordering $(v_1, \ldots, v_n)$ nondecreasing with respect to $\phi$, and a parameter $\eta > 0$, and returns an independent set $I \subseteq V(\cH)$. Each vertex $v_i$ is initially assigned the weight $p_0(v_i) = \alpha_{v_i} \coloneqq \frac{\phi(v_i)}{1 - 2\eta/k}$.
Iterating through the vertices according to the specified ordering, we include each vertex $v_i$ in $I$ with probability $p_{i-1}(v_i)$. 

Recall from Section~\ref{subsection: notation} that $v_j \in N_R(v_i)$ precisely when $v_i$ and $v_j$ are the penultimate and final vertices, respectively, of some edge under the given ordering. After processing $v_i$, we update the weights $p_i(v_j)$ for each $j > i$ based on whether $v_i \in I$:
\begin{itemize}
\item If $v_j \notin N_R(v_i)$ or $e_{< i}^{ij} \not\subseteq I$, we leave the weight unchanged: $p_i(v_j) = p_{i-1}(v_j)$.
\item If $v_j \in N_R(v_i)$ and $e_{< i}^{ij} \subseteq I$, we set
\begin{equation}\label{eq: update intro}
p_i(v_j) = \begin{cases}
0 & \text{if } v_i \in I, \\
\dfrac{p_{i-1}(v_j)}{1 - p_{i-1}(v_i)} & \text{if } v_i \notin I.
\end{cases}
\end{equation}
\end{itemize}
In particular, if $u \in I$ for every $u \in e^{ij} \setminus \{v_j\}$, then either $v_j$ is already bad or the update rule gives $p_{j-1}(v_j) = 0$, ensuring that $v_j$ is never added to $I$.

As noted in the overview (Section~\ref{subsection: upper bound overview}), a vertex $v_j$ is classified as \emph{bad} if its weight becomes too large—specifically, if $p_i(v_j) \ge \alpha_{v_j}^{\eta}$. Bad vertices are excluded from $I$, and once a vertex becomes bad, its weight is frozen and no longer updated.

\smallskip

Let us now provide a formal description of the algorithm. 

\vspace{5pt}
\begin{breakablealgorithm}
\caption{Random Independent Set Procedure for Uncrowded Hypergraphs}\label{algorithm: iset uncrowded}
\begin{flushleft}
\noindent\makebox[4.5em][l]{\textbf{Input}:}%
\parbox[t]{\dimexpr\linewidth-4.5em\relax}{%
An $n$-vertex $k$-uniform uncrowded hypergraph $\cH$, a demand function $\phi$ for $V(\cH)$, a vertex ordering $(v_1, \ldots, v_n)$ nondecreasing with respect to $\phi$, and a parameter $\eta > 0$.}

\smallskip

\noindent\makebox[4.5em][l]{\textbf{Output}:}%
\parbox[t]{\dimexpr\linewidth-4.5em\relax}{%
An independent set $I \subseteq V(\cH)$.
}
\end{flushleft}

\begin{enumerate}[itemsep=3pt, label=(\Alph{enumi}), ref=(\Alph{enumi})]
\item \textbf{Initialize:} Set $I = B_0 = \emptyset$.
For each $i \in [n]$, set $p_0(v_i) = \alpha_{v_i}$, where $\alpha_{v_i} \coloneqq \frac{\phi(v_i)}{1 - 2\eta/k}$.

\item \textbf{Iterate:} For $i = 1, \ldots, n$, do the following:
\begin{enumerate}[itemsep=3pt, label=(\Roman{enumii}), ref=(\Alph{enumi})(\Roman{enumii})]
\item\label{step: bad skip} If $v_i\in B_{i-1}$, set $p_i(v_j)=p_{i-1}(v_j)$ for all $j$.
\item If $v_i\not\in B_{i-1}$, do the following:
\begin{enumerate}[itemsep=3pt, label=(\arabic{enumiii}), ref=(\Alph{enumi})(\Roman{enumii})(\arabic{enumiii})]
\item\label{step: no update} For each $j$ such that $v_j \notin N_R(v_i) \setminus B_{i-1}$, set $p_i(v_j) = p_{i-1}(v_j)$.
\item\label{step: coin flip} Let $a_{i} \sim \mathrm{BER}(p_{i-1}(v_i))$. If $a_{i} = 1$, then add $v_i$ to $I$.
\item For each $v_j \in N_R(v_i)\setminus B_{i-1}$, do the following:
\begin{enumerate}[itemsep=3pt, label=(\roman{enumiv}), ref=(\Alph{enumi})(\Roman{enumii})(\arabic{enumiii})(\roman{enumiv})]
\item\label{step: prefix fail} If $e^{ij}_{< i} \not\subseteq I$, set $p_i(v_j) = p_{i-1}(v_j)$.
\item\label{step: set zero} If $e^{ij}_{< i}\subseteq I$ and $a_{i} = 1$, set $p_i(v_j) = 0$.
\item\label{step: weight boost} If $e^{ij}_{< i}\subseteq I$ and $a_{i} = 0$, set $p_i(v_j) = \frac{p_{i-1}(v_j)}{1 - p_{i-1}(v_i)}$.
\end{enumerate}
\end{enumerate}
\item\label{step: bad update} Set $B_i=\{v_j\,:\,p_i(v_j)\ge \alpha_{v_j}^{\eta}\}$.
\end{enumerate}
\end{enumerate}
\end{breakablealgorithm}
\vspace{5pt}

Let $\cF_i$ denote the history of the procedure through the end of step $i$. Since the sequence $(\alpha_{v_i})_{i=1}^n$ is nondecreasing, the update rule gives, for $0 \le i < j \le n$,
\begin{equation}\label{eq:weight-cap}
0\le p_i(v_j)\le \frac{\alpha_{v_j}^{\eta}}{1-\alpha_{v_j}^{\eta}}.
\end{equation}
Indeed, before $v_j$ is included in $B_i$ its weight is smaller than $\alpha_{v_j}^{\eta}$, and a single update can multiply it by at most $1/(1-\alpha_{v_j}^{\eta})$; after it is included in $B_i$, its weight is no longer updated.\footnote{Allen, Dhawan, and Noel~\cite{high_girth} relied on equalizing coin flips to enforce $p_i(v_j) \leq \alpha_{v_j}^{\eta}$. Dispensing with these coin flips simplifies our analysis.}

\begin{lemma}\label{lemma: potential}
For each $j \in [n]$ and each $i \in \{1, \ldots, j-1\}$, we have $\E[p_i(v_j) \,|\, \cF_{i-1}] = p_{i-1}(v_j)$.
\end{lemma}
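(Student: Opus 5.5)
Fix $j$ and $1\le i<j$, and condition on $\cF_{i-1}$. This determines $B_{i-1}$, $I\cap\{v_1,\dots,v_{i-1}\}$ and all the weights $p_{i-1}(\cdot)$. The only randomness left in step $i$ is the coin $a_i$. We split into cases according to the branch of the algorithm that applies to $v_j$ at step $i$.

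First, suppose some rule leaves $v_j$ unchanged deterministically. This happens in three situations: $v_i\in B_{i-1}$ (step~\ref{step: bad skip}); $v_j\notin N_R(v_i)\setminus B_{i-1}$ (step~\ref{step: no update}); or $v_j\in N_R(v_i)\setminus B_{i-1}$ but $e^{ij}_{<i}\not\subseteq I$ (step~\ref{step: prefix fail}). In each case $p_i(v_j)=p_{i-1}(v_j)$ is already $\cF_{i-1}$-measurable, so the claim is immediate. Two remarks make this case well defined.
\begin{itemize}
\item The edge $e^{ij}$ is well defined because $\cH$ is uncrowded, hence linear.
\item The condition $e^{ij}_{<i}\subseteq I$ depends only on vertices preceding $v_i$, so it is determined by $\cF_{i-1}$ and not by $a_i$.
\end{itemize}
This second point is the reason the update is delayed to the penultimate vertex.

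The remaining case is $v_i\notin B_{i-1}$, $v_j\in N_R(v_i)\setminus B_{i-1}$ and $e^{ij}_{<i}\subseteq I$. Write $p\coloneqq p_{i-1}(v_i)$. Then $a_i\sim\mathrm{BER}(p)$, and by steps~\ref{step: set zero} and~\ref{step: weight boost},
\[
\E[p_i(v_j)\mid\cF_{i-1}] = p\cdot 0+(1-p)\cdot\frac{p_{i-1}(v_j)}{1-p}=p_{i-1}(v_j).
\]

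The only point needing care is that this computation requires $p<1$, which also makes $\mathrm{BER}(p)$ well defined. Since $v_i\notin B_{i-1}$, we have $p<\alpha_{v_i}^{\eta}$. Moreover $\alpha_{v_i}<1$, because the demand bound~\eqref{eq: demand bound} makes $\phi(v_i)$ tiny once $\delta_0$ is large, and $1-2\eta/k$ is bounded away from $0$. Hence $p<1$, and the lemma follows in all cases.
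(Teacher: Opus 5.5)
Your proof is correct and follows the paper's argument. It uses the same case analysis on which branch of the update rule applies to $v_j$ at step $i$: every branch leaves $p_i(v_j)=p_{i-1}(v_j)$ except when $v_i\notin B_{i-1}$, $v_j\in N_R(v_i)\setminus B_{i-1}$ and $e^{ij}_{<i}\subseteq I$, where the zero/boost rule gives an exact martingale step. Your extra check that $p_{i-1}(v_i)<\alpha_{v_i}^{\eta}<1$, so that the coin flip and the division are well defined, is a detail the paper leaves implicit; it follows from the demand bound and the definition of $B_{i-1}$.
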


\begin{proof}
Fix $j \in [n]$ and $i \in [j-1]$.
If $v_i \in B_{i-1}$, then $p_i(v_j) = p_{i-1}(v_j)$ by Step~\ref{step: bad skip}.
Suppose $v_i \notin B_{i-1}$.
If $v_i \notin N_L(v_j)$, then $v_j \notin N_R(v_i)$, so $p_i(v_j) = p_{i-1}(v_j)$ by Step~\ref{step: no update}.
Suppose $v_i \in N_L(v_j)$, and let $e = e^{ij}$ be the unique edge containing both $v_i$ and $v_j$.
If $v_j \in B_{i-1}$, then $v_j \notin N_R(v_i) \setminus B_{i-1}$, so $p_i(v_j) = p_{i-1}(v_j)$ by Step~\ref{step: no update}.
Suppose $v_j \notin B_{i-1}$.
If $e_{< i} \not\subseteq I$, then $p_i(v_j) = p_{i-1}(v_j)$ by Step~\ref{step: prefix fail}.
Finally, suppose $e_{< i} \subseteq I$. Then by Steps~\ref{step: set zero} and~\ref{step: weight boost},
\begin{align*}
    \E[p_i(v_j) \,|\, \cF_{i-1}]
    &= p_{i-1}(v_i) \cdot 0 + (1 - p_{i-1}(v_i)) \cdot \frac{p_{i-1}(v_j)}{1 - p_{i-1}(v_i)} = p_{i-1}(v_j),
\end{align*}
completing the proof.
\end{proof}

The key property of the algorithm is given by the following lemma, which immediately yields Proposition~\ref{prop: degenerate + local demand}.

\begin{lemma}\label{lemma: prob} For each $j \in [n]$, we have
$\pr(v_j \in I) \ge (1 - 2\eta/k) \alpha_{v_j} = \phi(v_j)$.
\end{lemma}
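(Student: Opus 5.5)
Our plan is to follow the overview in Section~\ref{subsection: upper bound overview}: a first-moment identity coming from the martingale property, followed by a second-moment bound on the final weight.

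\textbf{First moment.} Write $X \coloneqq p_{j-1}(v_j)$. By Lemma~\ref{lemma: potential} and the tower property, $\E[X]=\alpha_{v_j}$. When $v_j$ is processed, it is added to $I$ with probability $X$, provided $v_j\notin B_{j-1}$. Independence of $I$ is guaranteed by the update rule: if all of $e\setminus\{v_j\}$ lies in $I$, then the weight was set to $0$ at the penultimate vertex. Hence
\[
\pr(v_j\in I)=\E\bigl[X\,\I\{v_j\notin B_{j-1}\}\bigr]=\alpha_{v_j}-\E\bigl[X\,\I\{X\ge \alpha_{v_j}^\eta\}\bigr].
\]
Here we use that once $v_j$ becomes bad its weight is frozen and remains at least $\alpha_{v_j}^\eta$, so $v_j\in B_{j-1}$ exactly when $X\ge\alpha_{v_j}^\eta$. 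Since $X\,\I\{X\ge\alpha_{v_j}^\eta\}\le X^2/\alpha_{v_j}^{\eta}$, it suffices to show
\[
\E[X^2]\le \tfrac{2\eta}{k}\,\alpha_{v_j}^{1+\eta}.
\]

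\textbf{Second moment.} Write $\alpha=\alpha_{v_j}$ and $D=\deg_{\cH_j}(v_j)\le d^*(v_j)$. The weight $X$ equals $\alpha$ times a product over the edges $e\ni v_j$ whose final vertex is $v_j$. Each such edge contributes a factor $1/(1-p_{i-1}(v_i))$ if $e_{<i}\subseteq I$ and $v_i\notin I$ (with $v_i$ the penultimate vertex), a factor $0$ if $e\setminus\{v_j\}\subseteq I$, and a factor $1$ otherwise. (Edges containing $v_j$ that do not end at $v_j$ never update $p(v_j)$.) A given edge contributes nontrivially to $\E[X^2]$ only when its first $k-2$ vertices all enter $I$. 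We then compute the conditional second moment one edge at a time, using
\[
\E[Y^2]=p\cdot 0+(1-p)\,\frac{1}{(1-p)^2}=\frac{1}{1-p}
\]
for the factor $Y$ of an edge with penultimate weight $p$. This gives heuristically
\[
\E[X^2]\approx \alpha^2\prod_{e}\Bigl(1+\prod_{u\in e\setminus v_j}p(u)\Bigr)\le \alpha^2\exp\Bigl((1+o(1))D\alpha^{k-1}\Bigr).
\]
The bound $p(u)\lesssim\alpha$ on typical weights uses the facts that $\alpha_{u}\le\alpha$ for earlier $u$ (the ordering is nondecreasing in $\phi$), and that weights are martingales capped via~\eqref{eq:weight-cap}. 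To make this rigorous we will expose the process edge by edge. The key point is to show that the factors coming from different edges, and the events $e_{<i}\subseteq I$, are nearly uncorrelated. This is where uncrowdedness enters. The edges at $v_j$ pairwise meet only at $v_j$ (linearity). Moreover, the vertices of distinct such edges share no common neighbors that could correlate their weights, because any such dependence would create a $3$- or $4$-cycle through $v_j$. Concretely, we will prove by induction over $i$ a bound of the form
\[
\E\Bigl[X_i^2\prod_{u\in S}p_i(u)\Bigr]\le(1+o(1))\,\alpha^2\prod_{u\in S}\alpha_u\cdot\exp\bigl(\cdots\bigr)
\]
for suitable sets $S$ of vertices in edges at $v_j$. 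The products of weights of other vertices in the same edge are controlled by their own martingale property together with the cap $\alpha_u^\eta/(1-\alpha_u^\eta)$.

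\textbf{Conclusion and main obstacle.} Plugging in $\alpha\le(1+\eps)^{-1}(1-2\eta/k)^{-1}\bigl(\tfrac{\log d^*}{(k-1)d^*}\bigr)^{1/(k-1)}$ gives
\[
D\alpha^{k-1}\le (1-c\eps)\,\frac{\log d^*}{k-1}.
\]
Then $\exp(D\alpha^{k-1})\le (d^*)^{(1-c\eps)/(k-1)}$, while $\alpha^{1-\eta}\approx (d^*)^{-(1-\eta)/(k-1)}$ up to logarithmic factors. So with $\eta=\eps/10$,
\[
\frac{\E[X^2]}{\alpha^{1+\eta}}\le \alpha^{1-\eta}\,(d^*)^{(1-c\eps)/(k-1)}=o(1)\le \frac{2\eta}{k}
\]
once $\delta_0$ is large. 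The main obstacle is the rigorous second-moment step. We must handle the correlations between $p_i(u)$ for the non-final vertices $u$ of edges at $v_j$ and the event that earlier vertices of the same edge lie in $I$, using only girth at least $5$. We expect to need a carefully chosen inductive hypothesis that tracks mixed moments over vertex sets of bounded size, as in the analysis of \cite{high_girth}.
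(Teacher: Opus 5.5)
Your first-moment reduction is correct, and slightly cleaner than the paper's. Since $v_j\in B_{j-1}$ exactly when $X\ge\alpha_{v_j}^{\eta}$, the bound $X\,\I\{X\ge\alpha_{v_j}^\eta\}\le X^2/\alpha_{v_j}^{\eta}$ reduces everything to $\E[X^2]\le\frac{2\eta}{k}\alpha_{v_j}^{1+\eta}$. The paper instead uses the cap \eqref{eq:weight-cap} and Markov's inequality, losing a harmless factor $2\alpha_{v_j}^{-\eta}$. Your closing arithmetic also works once $\E[X^2]\le\alpha_{v_j}^2\exp\bigl((1+\eta)\,d^*(v_j)\,\alpha_{v_j}^{k-1}\bigr)$ is known.

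\textbf{The gap.} That second-moment bound is the entire content of the lemma, and you do not prove it. You offer a heuristic and an unspecified induction, which you yourself flag as the main obstacle, and as sketched it would not go through:
\begin{itemize}
\item Your hypothesis $\E\bigl[X_i^2\prod_{u\in S}p_i(u)\bigr]\le(1+o(1))\alpha^2\prod_{u\in S}\alpha_u\exp(\cdots)$ omits the prefix indicators $\I[e_{<i}\subseteq I]$. These indicators are exactly what make an edge contribute order $\prod_{u\in e\setminus\{v_j\}}\alpha_u$. The cap $\alpha_u^\eta/(1-\alpha_u^\eta)$ is far too large to control the other weights, and no ``typical-size'' bound on $p(u)$ is available or needed.
\item A $(1+o(1))$ loss per step would compound over up to $j-1$ steps.
\item Your per-edge product description of $X$ ignores that $v_j$'s weight freezes once it is bad, and that bad penultimate vertices trigger no update.
\end{itemize}

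\textbf{The missing idea: an exact supermartingale.} For $v_\ell\in N_L(v_j)$, set
\[
A_{\ell,i}=1+(1+\eta)\,\I[e^{\ell j}_{\le i}\subseteq I]\prod_{u\in e^{\ell j}_{(i,j)}}p_i(u),
\qquad
S_i=p_i(v_j)^2\prod_{i<\ell<j,\ v_\ell\in N_L(v_j)}A_{\ell,i}.
\]
The paper shows $\E[S_i\mid\cF_{i-1}]\le S_{i-1}$ with no error term. The mechanism is that $\I[e_{\le i}\subseteq I]\prod_{u\in e_{(i,j)}}p_i(u)$ is itself a martingale, and each processing step is handled as follows:
\begin{itemize}
\item Processing an internal vertex $v_i$ of $e$ trades the weight $p_{i-1}(v_i)$ for the indicator $\I[v_i\in I]$, whose conditional mean is $p_{i-1}(v_i)$.
\item Processing a left-neighbour of some $u\in e_{(i,j)}$ is handled by Lemma~\ref{lemma: potential}.
\item Processing the penultimate vertex $v_i$ of $e$ uses the deterministic bound $p_i(v_j)^2\le p_i(v_j)\,p_{i-1}(v_j)\bigl(1+(1+\eta)\I[e_{<i}\subseteq I]\,p_{i-1}(v_i)\bigr)$. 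This absorbs the factor $A_{i,i-1}$ retired at that step. In the second-moment argument, the cap enters only here, to get $1/(1-x)\le 1+(1+\eta)x$.
\end{itemize}

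Your intuition about where uncrowdedness enters is right, but it must be made into a per-step exclusivity statement: each $v_i$ plays at most one of these roles. Otherwise, either a common left-neighbour of two vertices of $\{v_j\}\cup\bigcup_\ell e^{\ell j}_{(i,j)}$, or a vertex of some $e^{sj}$ that is also a left-neighbour of such a vertex, would create a cycle of length at most $4$. Hence only one factor of $S_{i-1}$ moves at each step, and the conditional expectation factorizes.

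Then, using $\alpha_u\le\alpha_{v_j}$ and $|N_L(v_j)|\le d^*(v_j)$,
\[
\E[X^2]=\E[S_{j-1}]\le S_0\le\alpha_{v_j}^2\exp\bigl((1+\eta)\,d^*(v_j)\,\alpha_{v_j}^{k-1}\bigr).
\]
Without this potential, or an equivalent exact recursion over mixed moments that carries the indicators, your proposal stops where the proof actually begins.
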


\begin{proof}
\stepcounter{ForClaims}
\renewcommand{\theForClaims}{\ref*{lemma: prob}}
Fix $j \in [n]$. If $\phi(v_j) = 0$, the conclusion is immediate. If $j = 1$, the algorithm includes $v_1$ with probability $\alpha_{v_1} \geq \phi(v_1)$. We may therefore assume that $\phi(v_j) > 0$ and $j \geq 2$.
Given that $d^*(v_j) \ge \delta$ is sufficiently large, by the demand bound~\eqref{eq: demand bound} and the definition of $\alpha_{v_j}$, we first note that
\begin{equation}\label{eq: alpha v_j bound}
\alpha_{v_j} = \frac{\phi(v_j)}{1-2\eta/k} \le \frac{1}{(1-2\eta/k)(1+\eps)} \left(\frac{1}{k-1} \, \frac{\log d^*(v_j)}{d^*(v_j)}\right)^{\frac{1}{k-1}} \ll d^*(v_j)^{-\frac{1-\eta}{k-1}}.
\end{equation}

Now, if $v_j \in B_{j-1}$, then $v_j$ is not included in $I$ by Step~\ref{step: bad skip}; otherwise (i.e., if $v_j \notin B_{j-1}$), $v_j$ is included in $I$ with probability $p_{j-1}(v_j)$ in the $j$-th iteration. Thus,
\begin{equation}\label{eq: v_j in I first expression}
\pr(v_j\in I) = \E\left[p_{j-1}(v_j)\,\I\left[v_j \notin B_{j-1}\right]\right] = \E\left[p_{j-1}(v_j)\right] -  \E\left[p_{j-1}(v_j)\,\I\left[v_j \in B_{j-1}\right]\right].
\end{equation}
By Lemma~\ref{lemma: potential} and the tower property of conditional expectation, we have
\[
\E[p_{j-1}(v_j)] 
= \E\left[\E[p_{j-1}(v_j)\,|\,\cF_{j-2}]\right] 
= \E[p_{j-2}(v_j)] 
= \cdots 
= \E[p_0(v_j)] 
= p_0(v_j) 
= \alpha_{v_j}.
\]
Combining this with~\eqref{eq:weight-cap} and~\eqref{eq: v_j in I first expression}, we obtain that
\begin{align}
\pr(v_j\in I) 
&= \alpha_{v_j} - \E\left[p_{j-1}(v_j)\,\I\left[v_j \in B_{j-1}\right]\right]\\
&\ge \alpha_{v_j} - \frac{\alpha_{v_j}^{\eta}}{1 - \alpha_{v_j}^{\eta}}\,\pr(v_j \in B_{j-1}) 
\ge \alpha_{v_j} - 2\alpha_{v_j}^{\eta}\pr(v_j \in B_{j-1}),\label{eq: v_j in I, v_j bad}
\end{align}
where in the last inequality we used $\alpha_{v_j}^{\eta} \le 1/2$ by~\eqref{eq: alpha v_j bound} and that $d^*(v_j)$ is sufficiently large. 

As outlined in the proof overview (Section~\ref{subsection: upper bound overview}), we control $\pr(v_j \in B_{j-1})$ by bounding $\E[p_{j-1}(v_j)^2]$ from above.
Indeed, since $v_j \in B_{j-1}$ implies $p_{j-1}(v_j) \ge \alpha_{v_j}^{\eta}$, we have $\E[p_{j-1}(v_j)^2] \ge \alpha_{v_j}^{2\eta} \, \Pr(v_j \in B_{j-1})$, and so it follows from~\eqref{eq: v_j in I, v_j bad} that
\begin{equation}\label{eq: v_j in I middle bound}
\Pr(v_j\in I) \ge \left(1 - 2\alpha_{v_j}^{-1-\eta}\,\E[p_{j-1}(v_j)^2]\right)\alpha_{v_j}.
\end{equation}
It suffices therefore to bound $\E[p_{j-1}(v_j)^2]$. To this end, we define a few auxiliary random variables.
For each $v_\ell \in N_L(v_j)$ and $0 \leq i < \ell$, let
\begin{equation}\label{def: A_ell,i}
    A_{\ell,i} = 1 + (1+\eta)\, \I\left[e^{\ell j}_{\le i} \subseteq I\right]\prod_{u \in e^{\ell j}_{(i, j)}} p_{i}(u).
\end{equation}
With the above in hand, for each $0 \le i \le j-1$, define
\begin{equation}\label{def: S_i}
S_i \coloneqq p_{i}(v_j)^2 \prod_{\substack{i < \ell < j \\ v_\ell \in N_L(v_j)}} A_{\ell, i}.
\end{equation}
Note that only factors with $i < \ell$ occur in $S_i$.
The following claim, which is key to the proof, shows that $S_i$ is a supermartingale.
\begin{claim}\label{claim: S_i}
For $i = 1, \ldots, j-1$, we have $\E[S_i \,|\, \cF_{i-1}] \le S_{i-1}$.
\end{claim}

\begin{claimproof}
Fix $i\in[j-1]$. Recall that
\begin{equation}\label{eq: S_i recall}
S_i = p_{i}(v_j)^2\prod_{\substack{i < \ell < j \\ v_\ell \in N_L(v_j)}}A_{\ell,i}
\end{equation}
and
\begin{equation}\label{eq: S_{i-1} recall}
S_{i-1} = p_{i-1}(v_j)^2\prod_{\substack{i \le \ell < j \\ v_\ell \in N_L(v_j)}}A_{\ell,i-1}.
\end{equation}
Note that $S_i$ differs from $S_{i-1}$ in three respects: 
\begin{enumerate}
\item all weights $p_{i-1}(\cdot)$ are replaced by $p_i(\cdot)$;
\item for each $A_{\ell, i}$ indexed by $\ell > i$, the indicator changes from $\I\left[e^{\ell j}_{\leq i - 1} \subseteq I\right]$ to $\I\left[e^{\ell j}_{\le i} \subseteq I\right]$, and the index set of the product changes from $e^{\ell j}_{(i-1,j)}$ to $e^{\ell j}_{(i,j)}$ (both reflecting the transition past $v_i$); and
\item if $v_i \in N_L(v_j)$, the term $A_{i, i-1}$ is present in $S_{i-1}$ but absent from $S_i$.
\end{enumerate}

Let
\[
X \coloneqq \bigcup_{\substack{i < \ell < j \\ v_\ell \in N_L(v_j)}}e^{\ell j}_{(i, j)}
\]
denote the set of all vertices (other than $v_j$) whose weights appear in the definition of~$S_i$.
When we process $v_i$, $S_i$ can differ from $S_{i-1}$ only if (i) $v_i$ is a left-neighbor of some vertex in $X \cup \{v_j\}$, or (ii) $v_i$ is contained in some edge $e^{\ell j}$ with $i < \ell < j$.
More precisely, $v_i$ can play one of only four roles (see Fig.~\ref{fig: claim roles} for an illustration of the first three):
\begin{enumerate}[(a)]
\item\label{role: left-neighbor X} $v_i \in N_L(u)$ for some $u \in X$; letting $v_s \in N_L(v_j)$ be such that $e^{s j}$ is the unique edge containing $u$ and $v_j$, $p_i(u)$ may differ from $p_{i-1}(u)$ when considering $A_{s, i-1}$ vs $A_{s, i}$; or
\item\label{role: internal} $v_i \in e^{s j}$ for some $i < s < j$ with $v_{s} \in N_L(v_j)$, so that $\I\left[e^{s j}_{\le i} \subseteq I\right] = \I\left[e^{s j}_{\leq i-1} \subseteq I\right] \cdot \I\left[v_i \in I\right]$ when considering $A_{s, i-1}$ vs $A_{s, i}$; or
\item\label{role: left-neighbor vj} $v_i \in N_L(v_j)$, so that $p_i(v_j)$ may differ from $p_{i-1}(v_j)$, and the term $A_{i, i-1}$ is absent from $S_i$; or
\item\label{role: none} none of the above.
\end{enumerate}

\begin{figure}[htb!]
    \centering
    \vspace{-7.0pt}
    \begin{tikzpicture}[scale=1.1, every node/.style={scale=1.1}]
    
    \tikzset{
        vert/.style={circle, fill=black, draw=black, line width=0.15pt, inner sep=0pt, minimum size=2.5pt},
        rvert/.style={circle, fill=red!80!black, draw=red!80!black, line width=0.15pt, inner sep=0pt, minimum size=2.5pt},
        svert/.style={circle, fill=orange!80!yellow, draw=orange!70!yellow, line width=0.15pt, inner sep=0pt, minimum size=2.5pt},
    }


    \node[vert] (vj) at (5, 0) {};
    \node[font=\scriptsize, right=1pt] at (4.8, -0.45) {$v_j$};

    \draw (2.70, 0) ellipse (2.8 and 0.4);
    \node[font=\scriptsize] at (0, -0.5) {$e_{\ref{role: internal}} = e^{s j}$};

    \node[vert] at (0.40, 0) {};
    \node[font=\scriptsize] at (0.79, 0) {$\cdots$};
    \node[svert] at (1.17, 0) {};
    \node[font=\scriptsize, below=0pt] at (1.4, 0.15) {$v_i^{\ref{role: internal}}$};

    \fill[red!10, opacity=0.7] (3.08, 0) ellipse (1.3 and 0.25);
    \draw[red!50!black, dashed] (3.08, 0) ellipse (1.3 and 0.25);

    \node[rvert] at (1.93, 0) {};
    \node[rvert] at (2.70, 0) {};
    \node[font=\scriptsize, left=0pt] at (2.70, 0.095) {$u$};
    \node[rvert] at (3.47, 0) {};
    \node[rvert] at (4.23, 0) {};
    \node[font=\scriptsize, red!70!black, left=0pt] at (4.25, 0) {$v_s$};

    \draw[rotate around={-40:(3.238, 1.478)}] (3.238, 1.478) ellipse (2.8 and 0.3);

    \node[vert] at (1.476, 2.957) {};
    \node[font=\scriptsize, rotate=-40] at (2.063, 2.464) {$\cdots$};

    \fill[red!10, opacity=0.7, rotate around={-40:(3.532, 1.232)}]
         (3.532, 1.232) ellipse (1.3 and 0.25);
    \draw[red!50!black, dashed, rotate around={-40:(3.532, 1.232)}]
         (3.532, 1.232) ellipse (1.3 and 0.25);

    \node[rvert] at (2.651, 1.971) {};
    \node[rvert] at (3.238, 1.478) {};
    \node[rvert] at (3.825, 0.986) {};
    \node[rvert] at (4.413, 0.493) {};

    \draw[rotate around={40:(3.238, -1.478)}] (3.238, -1.478) ellipse (2.8 and 0.3);
    \node[font=\scriptsize] at (3.296, -2.260) {$e_{\ref{role: left-neighbor vj}}$};

    \node[vert] at (1.476, -2.957) {};
    \node[vert] at (2.063, -2.464) {};
    \node[vert] at (2.651, -1.971) {};
    \node[vert] at (3.238, -1.478) {};
    \node[font=\scriptsize, rotate=40] at (3.649, -1.134) {$\cdots$};
    \node[svert] at (4.413, -0.493) {};
    \node[font=\scriptsize, below left=0pt] at (4.503, -0.468) {$v_i^{\ref{role: left-neighbor vj}}$};

    \draw[rotate around={140:(0.938, 1.478)}] (0.938, 1.478) ellipse (2.8 and 0.3);
    \node[font=\scriptsize, left] at (-0.185, 2.006) {$e_{\ref{role: left-neighbor X}}$};
    
    \node[svert] at (2.113, 0.493) {};
    \node[font=\scriptsize, above left=0pt] at (2.230, 0.378) {$v_i^{\ref{role: left-neighbor X}}$};
    \node[vert] at (1.398, 1.093) {};
    \node[font=\scriptsize, rotate=-40] at (0.938, 1.478) {$\cdots$};
    \node[vert] at (0.351, 1.971) {};
    \node[vert] at (-0.237, 2.464) {};
    \node[vert] at (-0.824, 2.957) {};

    \node[font=\scriptsize, anchor=north west] at (-4.0, -1.5) {%
        \begin{tabular}{@{}l@{\hskip 4pt}l@{}}
        \tikz\node[circle,fill=orange!80!yellow,draw=orange!70!yellow,line width=0.15pt,inner sep=0pt,minimum size=2.5pt]{}; & possible positions for $v_i$\\[2pt]
        \tikz\node[circle,fill=red!80!black,draw=red!80!black,line width=0.15pt,inner sep=0pt,minimum size=2.5pt]{}; & vertices in $\{v_{i+1},\ldots,v_{j-1}\}$
        \end{tabular}};
    \end{tikzpicture}
    \caption{%
    An illustration of the possible roles of $v_i$ relative to $v_j$ and the edges $e^{\ell j}$ in the proof of Claim~\ref{claim: S_i}. Red vertices lie in $\{v_{i+1},\ldots,v_{j-1}\}$; orange vertices mark the three possible positions for $v_i$ (one per case). In Case~\ref{role: left-neighbor X}, $v_i$ is a left-neighbor of a vertex $u \in X$ (here $u \in e^{sj}_{(i,j)}$), with $e_{\ref{role: left-neighbor X}}$ being the unique edge containing $v_i$ and $u$. In Case~\ref{role: internal}, $v_i$ is a vertex of the edge $e^{sj}$, where $v_s \in N_L(v_j)$. In Case~\ref{role: left-neighbor vj}, $v_i$ is the penultimate vertex of $e_{\ref{role: left-neighbor vj}}$, so $v_i \in N_L(v_j)$.
    }
    \label{fig: claim roles}
\end{figure}

We now argue that these roles are mutually exclusive.
Since $\cH$ is uncrowded, $N_L(u) \cap N_L(w) = \varnothing$ for all distinct $u, w \in X \cup \{v_j\}$; otherwise, a common left-neighbor would form a cycle of length at most~$4$ together with the corresponding edges.
In particular, $v_i$ belongs to $N_L(w)$ for at most one $w \in X \cup \{v_j\}$, so \ref{role: left-neighbor X} and \ref{role: left-neighbor vj} are mutually exclusive.
For the same reason, \ref{role: left-neighbor X} cannot hold simultaneously for distinct $u, w \in X$.
Similarly, as $\cH$ is linear, $v_i$ lies in at most one edge $e^{s j}$ with $i < s < j$ and $v_{s} \in N_L(v_j)$.
Moreover, \ref{role: internal} cannot hold simultaneously with \ref{role: left-neighbor vj}, as two such edges would contain both $v_i$ and $v_j$, contradicting the linearity of $\cH$.
Finally, \ref{role: left-neighbor X} and \ref{role: internal} cannot hold together: if $v_l$ is the (unique) vertex in $N_R(v_i) \cap X$, then the edges $e^{ij}$, $e^{il}$, and $e^{lj}$ would form a $3$-cycle, again contradicting uncrowdedness.

We consider cases. First, suppose that $v_i \in B_{i-1}$. Then, the algorithm does not update any weights (Step~\ref{step: bad skip}), and $v_i$ is not added to $I$.
Since $p_i(v_j) = p_{i-1}(v_j)$ and $p_i(u) = p_{i-1}(u)$ for all $u \in X$, it remains to compare the terms $A_{\ell, i}$ vs $A_{\ell, i-1}$.
For each $\ell$ with $i < \ell < j$ and $v_\ell \in N_L(v_j)$: if $v_i \notin e^{\ell j}$, then $e^{\ell j}_{\le i} = e^{\ell j}_{\leq i-1}$ and $e^{\ell j}_{(i,j)} = e^{\ell j}_{(i-1,j)}$, so $A_{\ell, i} = A_{\ell, i-1}$; if $v_i \in e^{\ell j}$, then $v_i \notin I$ forces $e^{\ell j}_{\le i} \not\subseteq I$, so $A_{\ell, i} = 1$, while $A_{\ell, i-1} \geq 1$.
Thus $A_{\ell, i} \leq A_{\ell, i-1}$ for each $ i < \ell < j$.
Moreover, if $v_i \in N_L(v_j)$, the term $A_{i, i-1}$ is present in $S_{i-1}$ (with value at least $1$) but absent from $S_i$. We conclude that $S_i \le S_{i-1}$ deterministically.
For the remaining cases, we suppose that $v_i \notin B_{i-1}$.

\vspace{5pt}
\noindent\textbf{Case~\ref{role: left-neighbor X}:}
Since $N_L(u) \cap N_L(w) = \varnothing$ for all distinct $u, w \in X \cup \{v_j\}$, there is a unique vertex $u \in X$ with $v_i \in N_L(u)$.
Furthermore, by linearity of $\cH$, there is a unique $i < s < j$ such that $u \in e^{sj}$.
It follows that
\begin{equation*}
    \E[S_i\mid \cF_{i-1}] = \E\left[ p_{i}(v_j)^2\prod_{\substack{i < \ell < j \\ v_\ell \in N_L(v_j)}}A_{\ell,i}\;\Bigg|\; \cF_{i-1}\right] = \E\left[ A_{s,i}\mid \cF_{i-1}\right]\,p_{i-1}(v_j)^2\prod_{\substack{i < \ell < j \\ \ell \neq s \\ v_\ell \in N_L(v_j)}} A_{\ell,i-1}.
\end{equation*}
It suffices to show that $\E\left[ A_{s,i}\mid \cF_{i-1}\right] \leq A_{s,i-1}$.
As $v_i \notin N_L(w)$ for each $w \in e^{sj}\setminus \{u\}$ and $v_i \notin e^{sj}$, we have
\begin{align*}
    \E\left[ A_{s,i}\mid \cF_{i-1}\right] &= 1+(1+\eta)\,\I\left[e^{\ell j}_{\leq i-1}\subseteq I\right] \E[p_{i}(u) \mid \cF_{i-1}]\prod_{w\in e^{s j}_{(i-1,j)} \setminus \{u\}}p_{i-1}(w) \\
    &= 1+(1+\eta)\,\I\left[e^{\ell j}_{\leq i-1}\subseteq I\right] \prod_{w\in e^{s j}_{(i-1,j)}}p_{i-1}(w) = A_{s, i-1},
\end{align*}
by Lemma~\ref{lemma: potential}.

\vspace{5pt}
\noindent\textbf{Case~\ref{role: internal}:}
By the linearity of $\cH$, there is a unique such $s$ with $i < s < j$ and $v_{s} \in N_L(v_j)$ such that $v_i \in e^{s j}$. 
Since Cases~\ref{role: left-neighbor X} and~\ref{role: left-neighbor vj} cannot occur together with Case~\ref{role: internal}, we have
$N_R(v_i)\cap\left(X\cup\{v_j\}\right)=\emptyset$, and thus \(p_i(w)=p_{i-1}(w)\) for every \(w\in X\cup\{v_j\}\).
As 
\[
\I\left[e^{s j}_{\le i}\subseteq I\right]
= \I\left[e^{s j}_{\leq i-1}\subseteq I\right] \cdot 
\I\left[v_i \in I\right],
\]
we have
\begin{align*}
    \E[S_i\mid \cF_{i-1}] &= \E\left[ p_{i}(v_j)^2\prod_{\substack{i < \ell < j \\ v_\ell \in N_L(v_j)}}A_{\ell,i} \;\Bigg|\; \cF_{i-1}\right] \\
    &= \E\left[ A_{s,i}\mid \cF_{i-1}\right]\,p_{i-1}(v_j)^2\prod_{\substack{i < \ell < j \\ \ell \neq s \\ v_\ell \in N_L(v_j)}} A_{\ell,i-1} \\
    &= \left(1 + (1+\eta)\,\E\left[ \I\left[e^{s j}_{\le i} \subseteq I\right]\prod_{u \in e^{s j}_{(i, j)}} p_{i}(u) \;\Bigg|\; \cF_{i-1}\right]\right)\,p_{i-1}(v_j)^2\prod_{\substack{i < \ell < j \\ \ell \neq s \\ v_\ell \in N_L(v_j)}} A_{\ell,i-1}\\
    &= \left(1 + (1+\eta)\,\E\left[\I\left[v_i \in I\right] \,\middle|\, \cF_{i-1}\right] \, \I\left[e^{s j}_{\leq i-1} \subseteq I\right]\prod_{u \in e^{s j}_{(i, j)}} p_{i-1}(u)\right)\,p_{i-1}(v_j)^2\prod_{\substack{i < \ell < j \\ \ell \neq s \\ v_\ell \in N_L(v_j)}} A_{\ell,i-1} \\
    &= p_{i-1}(v_j)^2\prod_{\substack{i < \ell < j  \\ v_\ell \in N_L(v_j)}} A_{\ell,i-1},
\end{align*}
where we use the fact that $\E\left[\I\left[v_i \in I\right]\,\middle|\,\cF_{i-1}\right] = p_{i-1}(v_i)$.

\vspace{5pt}
\noindent\textbf{Case~\ref{role: left-neighbor vj}:}
There are exactly two changes when we go from $S_{i-1}$ to $S_i$: 
$p_i(v_j)$ may differ from $p_{i-1}(v_j)$, and the factor $A_{i, i-1}$ is absent from $S_i$.
Indeed, no other factor changes. For every \(i<\ell<j\) with \(v_\ell\in N_L(v_j)\), since neither Case~\ref{role: left-neighbor X} nor Case~\ref{role: internal} can occur together with Case~\ref{role: left-neighbor vj}, we have \(v_i\notin e^{\ell j}\) and \(v_i\notin N_L(u)\) for every \(u\in e^{\ell j}_{(i,j)}\). Therefore, all weights, indicators, and products appearing in \(A_{\ell,i-1}\) remain unchanged, and hence \(A_{\ell,i}=A_{\ell,i-1}\).
In particular, we have
\[
\E[S_i \mid \cF_{i-1}] = \E\left[ p_{i}(v_j)^2\prod_{\substack{i < \ell < j \\ v_\ell \in N_L(v_j)}}A_{\ell,i} \;\Bigg|\; \cF_{i-1}\right] = \E\left[ p_{i}(v_j)^2\mid \cF_{i-1}\right]\prod_{\substack{i < \ell < j \\ v_\ell \in N_L(v_j)}}A_{\ell,i-1}.
\]
It therefore suffices to show that
\begin{align*}
    \E\left[p_i(v_j)^2\,|\,\cF_{i-1}\right] \le p_{i-1}(v_j)^2A_{i, i-1}
    &= p_{i-1}(v_j)^2 \left(1+(1+\eta)\,\I\left[e^{ij}_{\leq i-1} \subseteq I\right]\prod_{u \in e^{ij}_{(i-1, j)}}p_{i-1}(u)\right) \\
    &= p_{i-1}(v_j)^2 \left(1+(1+\eta)\,\I\left[e^{ij}_{\leq i-1} \subseteq I\right]\,p_{i-1}(v_i)\right),
\end{align*}
where we use the fact that $e^{ij}_{(i-1, j)} = \{v_i\}$.
If $v_j \in B_{i-1}$, its weight is unchanged, so the required bound follows from $A_{i,i-1} \geq 1$. We may therefore assume that $v_j \notin B_{i-1}$.
We claim the following deterministic bound:
\begin{equation}\label{eq: deterministic bounds}
p_i(v_j)^2
\le p_i(v_j)\,\frac{p_{i-1}(v_j)}{1-\I\left[e^{ij}_{\leq i-1} \subseteq I\right]\,p_{i-1}(v_i)}
\le p_i(v_j)\,p_{i-1}(v_j) \left(1+(1+\eta)\,\I\left[e^{ij}_{\leq i-1} \subseteq I\right]\,p_{i-1}(v_i)\right).
\end{equation}
For the first inequality in~\eqref{eq: deterministic bounds}, we must verify that 
\[
p_i(v_j) \le \frac{p_{i-1}(v_j)}{1-\I\left[e^{ij}_{\leq i-1} \subseteq I\right]\,p_{i-1}(v_i)}.
\]
If $e^{ij}_{\leq i-1} \not\subseteq I$, both sides equal $p_{i-1}(v_j)$. If $e^{ij}_{\leq i-1} \subseteq I$, then either  $v_i \in I$ and $p_i(v_j) = 0$ (Step~\ref{step: set zero}), or $v_i \notin I$ and $p_i(v_j) = p_{i-1}(v_j)/(1-p_{i-1}(v_i))$ by Step~\ref{step: weight boost}; in both cases the inequality holds.
For the latter inequality in~\eqref{eq: deterministic bounds}, we use that $\frac{1}{1-x} \le 1+(1+\eta)x$ whenever $0 \le x \le \frac{\eta}{1+\eta}$ with
\[
x = \I\left[e^{ij}_{\leq i-1} \subseteq I\right]\,p_{i-1}(v_i) \le p_{i-1}(v_i) \overset{\eqref{eq:weight-cap}}{\le} \frac{\alpha_{v_i}^{\eta}}{1-\alpha_{v_i}^{\eta}}
\le \frac{\eta}{1+\eta}.
\]
Taking conditional expectations of~\eqref{eq: deterministic bounds} and applying Lemma~\ref{lemma: potential}, we obtain
\begin{align*}
\E\left[p_i(v_j)^2\,|\,\cF_{i-1}\right]
&\le p_{i-1}(v_j) 
\left(1+(1+\eta)\,\I\left[e^{ij}_{\leq i-1} \subseteq I\right]\,p_{i-1}(v_i)\right)
\,\E\left[p_i(v_j)\,|\,\cF_{i-1}\right]\\
&= p_{i-1}(v_j)^2
\left(1+(1+\eta)\,\I\left[e^{ij}_{\leq i-1} \subseteq I\right]\,p_{i-1}(v_i)\right),
\end{align*}
as desired. 

\vspace{5pt}
\noindent\textbf{Case~\ref{role: none}:} 
In this case, $p_i(v_j) = p_{i-1}(v_j)$ and $p_i(u) = p_{i-1}(u)$ for every $u \in X$. Moreover, since $v_i \notin e^{\ell j}$ for every $\ell$ with $i < \ell < j$ and $v_\ell \in N_L(v_j)$, we have $\I\left[e^{\ell j}_{\le i} \subseteq I\right] = \I\left[e^{\ell j}_{\leq i-1} \subseteq I\right]$ for all such $\ell$. Thus, $S_i = S_{i-1}$.
This completes the case analysis.
\end{claimproof}

By Claim~\ref{claim: S_i} and the tower property of conditional expectation, we have
\[
\E[S_{j-1}] = \E\left[\E[S_{j-1}\,|\,\mathcal{F}_{j-2}]\right] \le \E[S_{j-2}] \le \cdots \le \E[S_0].
\]
Since $S_{j-1} = p_{j-1}(v_j)^2$ and $e^{\ell j}_{\le 0} = \varnothing$ for all $\ell$, this implies that
\begin{equation}\label{eq: second moment from supermartingale}
\E[p_{j-1}(v_j)^2] \le \E[S_0] = \alpha_{v_j}^2 \prod_{\substack{1 \le \ell < j \\ v_\ell \in N_L(v_j)}}A_{\ell, 0} = \alpha_{v_j}^2 \prod_{\substack{1 \le \ell < j \\ v_\ell \in N_L(v_j)}}\left(1 + (1+\eta)\prod_{u \in e^{\ell j}_{(0, j)}}p_0(u)\right).
\end{equation}
Note that as each $u \in e^{\ell j}_{(0,j)}$ appears before $v_j$ in the ordering, we have $p_0(u) = \alpha_u \le \alpha_{v_j}$.
Furthermore, since $|N_L(v_j)| \le \deg_{\cH_j}(v_j) \le d^*(v_j)$ (as a result of the linearity of $\cH$),
it follows from~\eqref{eq: second moment from supermartingale} that
\begin{align}
\E[p_{j-1}(v_j)^2]
&\le \alpha_{v_j}^2 \exp\left((1+\eta)\, d^*(v_j) \, \alpha_{v_j}^{k-1}\right)\\
&= \alpha_{v_j}^2\exp\left(\frac{1+\eta}{(1-2\eta/k)^{k-1}} \, d^*(v_j) \, \phi(v_j)^{k-1}\right) \\
&\le \alpha_{v_j}^2\exp\left(\frac{1+\eta}{(1-2\eta/k)^{k-1}(1+\eps)^{k-1}} \, \frac{\log d^*(v_j)}{k-1} \right). \label{eq: variance middle step}
\end{align}
Since $(1-2\eta/k)^{k-1} > 1-2\eta$, we note that
\begin{align}
\frac{1+\eta}{(1-2\eta/k)^{k-1}(1+\eps)^{k-1}} \le \frac{1+\eta}{(1-2\eta)(1+\eps)}
\le \frac{1+4\eta}{1+\eps} \le 1-3\eta,
\end{align}
where the last two inequalities follow from $\eps<1$ and $\eta = \eps/10$.
Substituting this into~\eqref{eq: variance middle step} gives
\begin{align}\label{eq:variance}
\E[p_{j-1}(v_j)^2] \le \alpha_{v_j}^2\exp\left((1-3\eta) \, \frac{\log d^*(v_j)}{k-1} \right) = \alpha_{v_j}^2\,d^*(v_j)^{\frac{1-3\eta}{k-1}}.
\end{align}
By substituting this into~\eqref{eq: v_j in I middle bound}, we obtain that
\begin{equation}
\pr(v_j\in I) \ge \left(1 - 2\alpha_{v_j}^{1-\eta}\,d^*(v_j)^{\frac{1-3\eta}{k-1}}\right)\alpha_{v_j}.
\end{equation}
Since $\alpha_{v_j}^{1-\eta} \le d^*(v_j)^{-\frac{(1-\eta)^2}{k-1}} \le d^*(v_j)^{-\frac{1-2\eta}{k-1}}$ by~\eqref{eq: alpha v_j bound}, it follows that
\begin{equation}
\pr(v_j\in I) \ge \left(1 - 2 d^*(v_j)^{-\frac{\eta}{k-1}}\right)\alpha_{v_j} \ge \left(1 - 2 \eta/k\right)\alpha_{v_j} = \phi(v_j),
\end{equation}
where in the last inequality we again used that $d^*(v_j)$ is sufficiently large.
\end{proof}

\subsection{Proof of Theorem~\ref{thm: local demand}}\label{subsection: local demand prop}

We now strengthen Proposition~\ref{prop: degenerate + local demand}
by allowing the error in the demand bound to decrease with the
local degree.
The proof of the following strengthening of Proposition~\ref{prop: degenerate + local demand} uses the same sampling procedure, with parameters chosen separately for each vertex.

\begin{proposition}\label{prop: degenerate + local demand2}
Let $k \ge 2$ be an integer. There exists a nonincreasing convex function $f_k\colon[0,\infty)\to[0,1]$ satisfying
\[
f_k(x)=(1-o_x(1))\left(\frac{1}{k-1}\,\frac{\log x}{x}\right)^{\frac{1}{k-1}}
\]
such that the following holds for every $n\in\N$.
Let $\cH$ be an $n$-vertex $k$-uniform uncrowded hypergraph with a vertex ordering $(v_1,\dots, v_n)$.
For each $i \in [n]$, let $\cH_i \coloneqq \cH[\{v_1, \ldots, v_i\}]$ and $d^*(v_i) \coloneqq \deg_{\cH_i}(v_i)$. Suppose that there is a demand function $\phi$ for $\cH$ such that:
\begin{enumerate}[(i)]
\item $\phi(v_i)$ is nondecreasing in $i$; and
\item for every $i \in [n]$, $\phi(v_i)\le f_k(d^*(v_i))$.
\end{enumerate}
Then $\cH$ admits a $\phi$-coloring.
Moreover, there is a $\poly(n)$-time algorithm that samples an independent set $I \subseteq V(\cH)$ such that, for each $v \in V(\cH)$, $\pr(v\in I) \ge \phi(v)$.
\end{proposition}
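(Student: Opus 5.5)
We run Algorithm~\ref{algorithm: iset uncrowded} again, now with a vertex-dependent slack. For each vertex $v$ we replace the fixed parameter $\eta$ by $\eta_v \coloneqq \eta(d^*(v))$, where $\eta(x)\to 0$ slowly as $x\to\infty$; a concrete choice is $\eta(x)=(\log\log x)^{-1/2}$, capped at a small constant. We set $\alpha_v \coloneqq \phi(v)/(1-2\eta_v/k)$ and declare $v$ bad once $p_i(v)\ge \alpha_v^{\eta_v}$. The function $f_k$ is built from a first pointwise choice
\[
g(x)=\frac{1}{1+10\eta(x)}\left(\frac{1}{k-1}\frac{\log x}{x}\right)^{\frac{1}{k-1}}\quad\text{for }x\ge x_0,
\]
with $g$ taken small and constant on $[0,x_0]$. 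We then let $f_k$ be the greatest convex minorant of $g$, adjusted so that it is nonincreasing and $[0,1]$-valued. The leading term is convex and decreasing for large $x$, and $\eta$ varies slowly. So this minorant is still $(1-o(1))$ times the leading term, and we get the asymptotic form. On the bounded range $x< x_0$, including vertices with $d^*=0$, we make $f_k$ small enough that the trivial estimates below apply.

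Lemma~\ref{lemma: potential} holds verbatim, since it uses only the update rule. The bound \eqref{eq:weight-cap} becomes $p_i(v_j)\le \alpha_{v_j}^{\eta_{v_j}}/(1-\alpha_{v_j}^{\eta_{v_j}})$, and this needs no monotonicity. The first real step is the supermartingale of Claim~\ref{claim: S_i}. The only place $\eta$ enters there is Case~\ref{role: left-neighbor vj}, through the inequality $\frac1{1-x}\le 1+(1+\eta)x$ with $x\le p_{i-1}(v_i)$. We therefore define $A_{\ell,i}$ with the factor $(1+\eta_{v_\ell})$, or with a uniform constant $1+\eta^*$, where $\eta^*$ depends only on $j$. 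For this we need $p_{i-1}(v_i)\le \alpha_{v_i}^{\eta_{v_i}}/(1-\alpha_{v_i}^{\eta_{v_i}})\le \eta^*/(1+\eta^*)$. This holds if $\alpha_{v_i}^{\eta_{v_i}}$ is small, i.e.\ $\eta(x)\log(1/f_k(x))\to\infty$. Our choice of $\eta$ gives this, and small-degree vertices have tiny $f_k$. The case analysis for roles (a)--(d) uses only uncrowdedness and linearity, so it carries over unchanged.

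We then get $\E[p_{j-1}(v_j)^2]\le \alpha_{v_j}^2\exp\bigl((1+\eta^*)\,d^*(v_j)\,\alpha_{v_j}^{k-1}\bigr)$, using $\alpha_u\le\alpha_{v_j}$ for earlier $u$. This step needs $\phi$ nondecreasing and $\alpha_v$ nondecreasing along the order. We do not automatically have the latter, because $\eta_v$ varies. To fix this, we replace $\alpha_u$ by $\phi(u)(1+3\eta_u/k)$ and use $\phi(u)\le\phi(v_j)$. We then absorb the mismatch between $\eta_u$ and $\eta_{v_j}$ by choosing $\eta^*$ as an absolute small constant in the $A$-factors only. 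The $A$-factors need only $1+\eta^*$, while the final estimate needs the exponent below $\frac{1-3\eta_{v_j}}{k-1}\log d^*(v_j)$. So we must check that the extra factor $(1+\eta^*)(1+O(\max\eta_u))^{k-1}$ is beaten by $(1+10\eta_{v_j})^{-(k-1)}$.

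This is the main obstacle, and the fix is to let the constant in $A$ track $\eta_{v_j}$ rather than $\eta_u$. Since $v_j$ is fixed in the whole second-moment computation, we set the factor to $1+\eta_{v_j}$. Case~\ref{role: left-neighbor vj} then needs $p_{i-1}(v_i)\le \eta_{v_j}/(1+\eta_{v_j})$. If $v_i$ is a left-neighbor of $v_j$ with $\phi(v_i)\le\phi(v_j)$, we argue that $p_{i-1}(v_i)$ is below roughly $\alpha_{v_i}^{\eta_{v_i}}$. Should this fail for very small-degree $v_i$, we fall back on the cruder bound $\frac1{1-x}\le 1+2x$ for those few edges, and we compensate by making $f_k$ tiny at small degree. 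Finally, exactly as at the end of Lemma~\ref{lemma: prob}, $\pr(v_j\in I)\ge(1-2d^*(v_j)^{-\eta_{v_j}/(k-1)})\alpha_{v_j}\ge(1-2\eta_{v_j}/k)\alpha_{v_j}=\phi(v_j)$. This requires $d^{-\eta(d)/(k-1)}\le \eta(d)/k$, which holds for $\eta(x)=(\log\log x)^{-1/2}$. The running time is polynomial as before.
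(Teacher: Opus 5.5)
There is a genuine gap, and it sits exactly at the step you flag as the main obstacle; your proposed fix does not close it. You index the slack by back-degree, $\eta_u=\eta(d^*(u))$. As a result the initial weights $\alpha_u=\phi(u)/(1-2\eta_u/k)$ are not monotone along the ordering. The bound $\prod_{u\in e^{\ell j}_{(0,j)}}\alpha_u\le\alpha_{v_j}^{k-1}$, used right after \eqref{eq: second moment from supermartingale}, can then fail by a constant factor rather than a $1+o(1)$ factor. Setting the coefficient in $A_{\ell,i}$ to $1+\eta_{v_j}$ only removes the $(1+\eta^*)$ term. You correctly note that the factor $(1+O(\max_u\eta_u))^{k-1}$ must also be beaten by $(1+10\eta_{v_j})^{-(k-1)}$, but you never handle it, and it cannot be handled: $\max_u\eta_u$ can equal your cap $\eta_{\max}$ while $\eta_{v_j}\to0$.

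Here is a concrete instance. Let $v_j$ come last and lie in $D$ edges that pairwise meet only in $v_j$, so there are no cycles at all. Every other vertex has $d^*=0$, and take $\phi\equiv f_k(D)$.
\begin{itemize}
\item Hypotheses (i) and (ii) allow this, because $f_k$ is nonincreasing. Making $f_k$ small at small degree is irrelevant, since $\phi(u)=\phi(v_j)$ lies far below that cap.
\item Every petal vertex has $\eta_u=\eta_{\max}$, so $D\alpha_u^{k-1}\ge(1-o(1))(1-2\eta_{\max}/k)^{-(k-1)}\frac{\log D}{k-1}\ge(1+\Omega(\eta_{\max}))\frac{\log D}{k-1}$.
\item On the other hand, $v_j$ stays below its threshold only if its product of boost factors is at most $\alpha_{v_j}^{\eta_{v_j}-1}\le D^{(1+o(1))/(k-1)}$.
\end{itemize}
Your second-moment bound therefore picks up an extra factor $D^{\Omega(\eta_{\max})}$, and the error term diverges.

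This is a failure of the parametrization, not just of the analysis. Tilt by the martingale $p_i(v_j)/\alpha_{v_j}$, so that $\Pr(v_j\in I)$ equals $\alpha_{v_j}$ times the tilted probability that $v_j$ never turns bad. Under the tilt the number of boosting edges is $\mathrm{Bin}(D,\alpha_u^{k-2})$, and the boost product concentrates around $\exp\bigl((1+o(1))D\alpha_u^{k-1}\bigr)$, which exceeds the allowance above. Hence $\Pr(v_j\in I)=o(\phi(v_j))$ with your parameters.

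The missing idea is to index the slack by the demand instead of the degree. The paper discards zero-demand vertices, takes $f_k(x)=h_k(x+C)$ with $h_k(x)=(1-3/\sqrt{\log x})\bigl(\frac{1}{k-1}\frac{\log x}{x}\bigr)^{1/(k-1)}$, and sets $\eta_i=c_k/\sqrt{\log(1/\phi(v_i))}$, $\alpha_i=\phi(v_i)/(1-2\eta_i/k)$ and threshold $\tau_i=\alpha_i^{\eta_i}$.
\begin{itemize}
\item Hypothesis (i) makes $\eta_i$, $\alpha_i$, $\tau_i$ all nondecreasing in $i$. So \eqref{eq:weight-cap} (with $\tau_j$), Case (c) with coefficient $1+\eta_j$, and $\prod_u\alpha_u\le\alpha_{v_j}^{k-1}$ all go through verbatim.
\item Hypothesis (ii) gives $\eta_j=O(c_k/\sqrt{\log(d^*(v_j)+C)})$, so the slack still vanishes as the degree grows.
\item In the example above, all the $\eta_u$ coincide, so the obstruction disappears.
\end{itemize}

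Two smaller points. First, your assertion that \eqref{eq:weight-cap} needs no monotonicity is wrong. A boost divides by $1-p_{i-1}(v_i)$ with $p_{i-1}(v_i)<\tau_{v_i}$, so without monotonicity you only get $p_i(v_j)\le\tau_{v_j}/(1-\tau_{v_i})$. That is harmless only once all thresholds are shown to be uniformly small. Second, the fallback $\frac{1}{1-x}\le 1+2x$ ``for those few edges'' in Case (c) is unjustified. All $d^*(v_j)$ left-neighbours may have small degree, and a coefficient $2$ in the exponent would be fatal. The fallback is also unnecessary: with your parameters, $\phi(v_i)\le\phi(v_j)$ already gives $\tau_{v_i}\ll\eta_{v_j}$ directly.
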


\begin{proof}[Proof sketch]
Consider $h_k \colon (1,\infty) \to \R$, given by 
\[
h_k(x) \coloneqq \left(1-\frac{3}{\sqrt{\log x}}\right)\left(\frac{1}{k-1}\,\frac{\log x}{x}\right)^{\frac{1}{k-1}}.
\]
We note that, for sufficiently large $x$, the function $h_k$ is positive, nonincreasing, and convex. Thus, choosing a sufficiently large constant $C > 1$ and defining $f_k \colon [0,\infty) \to [0,1]$ by $f_k(x) \coloneqq h_k(x+C)$ gives a nonincreasing convex function on $[0,\infty)$ satisfying
\[
f_k(x)=(1-o_x(1))
\left(\frac{1}{k-1} \, \frac{\log x}{x}\right)^{\frac{1}{k-1}}.
\]
Vertices with zero demand may first be deleted, since this can only decrease the degrees in the inherited ordering. For each remaining vertex $v_i$, we define
\[
\eta_i\coloneqq\frac{c_k}{\sqrt{\log(1/\phi(v_i))}} \qquad \text{and} \qquad \alpha_i\coloneqq\frac{\phi(v_i)}{1 - 2\eta_i/k},
\]
where $c_k>0$ is sufficiently small and $C$ is chosen sufficiently large after $c_k$. 
We then run Algorithm~\ref{algorithm: iset uncrowded} with these vertex-dependent parameters.
Since $\phi(v_i)$ is nondecreasing in $i$, the parameters $\eta_i$, $\alpha_i$, and the thresholds $\tau_i\coloneqq\alpha_i^{\eta_i}$ are nondecreasing in $i$, and~\eqref{eq:weight-cap} continues to hold with $\alpha_{v_j}^{\eta}$ replaced by $\tau_j$. For sufficiently large $C$, we also have $(1-\tau_j)^{-1-(k-1)T_k}\le1+\eta_j$, since $\tau_j=\exp\bigl(-\Theta_k(\sqrt{\log(1/\phi(v_j))})\bigr)$. Thus the proof of Claim~\ref{claim: S_i} applies with $\rho_j=\tau_j$ and all edge coefficients bounded by $1+\eta_j$ when analyzing $v_j$.
Moreover, we note that
\[
\eta_i =O\left(\frac{c_k}{\sqrt{\log(d^*(v_i)+C)}}\right).
\]
Thus, by choosing $c_k$ sufficiently small, $\eta_i$ is sufficiently small compared with the error term in the definition of $f_k$.
The remainder of the proof is identical to the proof of Proposition~\ref{prop: degenerate + local demand}, \textit{mutatis mutandis}, and so we omit the details.
\end{proof}

We now deduce Theorem~\ref{thm: local demand} from Proposition~\ref{prop: degenerate + local demand2}.
\begin{proof}[Proof of Theorem~\ref{thm: local demand}]
Let $f_k$ be the function given by Proposition~\ref{prop: degenerate + local demand2}. Consider an ordering $(v_1,\dots,v_n)$ of $V(\cH)$ such that $\phi(v_1)\le\cdots\le\phi(v_n)$.
Since $d^*(v_i)=\deg_{\cH_i}(v_i)\le\deg(v_i)$ and $f_k$ is nonincreasing, the demand condition in Theorem~\ref{thm: local demand} implies
\[
\phi(v_i)\le f_k(\deg(v_i))\le f_k(d^*(v_i)).
\]
Hence, Proposition~\ref{prop: degenerate + local demand2} yields Theorem~\ref{thm: local demand}.
\end{proof}

\section{Lower bound on $\chi_f(\cdot)$ for high-girth $d$-degenerate hypergraphs}\label{section: lower bound}

In this section, we prove Theorem~\ref{thm: degenerate}\ref{item: degenerate lb}. 
The construction is based on the \textit{uniform attachment model} studied in \cite{AcanPittel20, FriezePerezGimenezPralatReiniger19, MalyshkinZhukovskii22,wang2010evolving, barthelemy2022class}.
We formally describe our variant of the model in the following definition.

\begin{definition}\label{def:model}
Let $U_{n,d}^{(k)}$ be the random $k$-uniform hypergraph defined as follows. Start with the vertex set $\{0,\ldots,n\}$, and for each $1\le i\le n$, let $N_i^{-}$ be chosen independently.
If $\binom{i}{k-1}>d$, then $N_i^{-}$ is a uniformly random $d$-element subfamily of $\binom{\{0,\ldots,i-1\}}{k-1}$. Otherwise, $N_i^{-}=\binom{\{0,\ldots,i-1\}}{k-1}$. For every $A\in N_i^{-}$, add the hyperedge $A\cup\{i\}$.
After completing this construction, let \(U_{n,d}^{(k)}\) be the subhypergraph induced by \([n]\coloneqq\{1,\ldots,n\}\); that is, delete vertex \(0\) together with all edges containing it.
\end{definition}

\begin{remark}
It is clear that $U_{n,d}^{(k)}$ is $d$-degenerate with probability one. Indeed, $1,2,\ldots,n$ is a valid degeneracy ordering.
\end{remark}

As discussed in Section~\ref{section: proof overview}, we take advantage of LP duality.
In particular, recalling the linear programming formulation \eqref{eq:chif} for fractional coloring, it suffices to exhibit an appropriate feasible solution to the dual linear program \eqref{eq:fracclique}.
To this end, for $m\ge 0$, let $H_m\coloneqq \sum_{j=1}^m 1/j$, with $H_0\coloneqq 0$. Define the harmonic-tail function $\ell_n:[n]\to\R$ by
\[
\ell_n(i)\coloneqq H_n-H_{i-1}=\sum_{j=i}^n\frac1j,
\qquad
\text{and}
\qquad
\ell_n(S)\coloneqq \sum_{i\in S}\ell_n(i) \quad \text{for } S \subseteq [n].
\]
Changing the order of summation gives
\begin{equation}
\label{eq:total-harmonic-weight}
\ell_n([n])=\sum_{i=1}^n\sum_{j=i}^n\frac1j=n.
\end{equation}

For the remainder of this section, we set
\begin{equation}\label{eq:a-def}
a_{k,d}\coloneqq \left(\frac{1}{k-1}\frac{\log d}{d}\right)^{\frac{1}{k-1}}.
\end{equation}

We state two key lemmas. The first shows that $U_{n,d}^{(k)}$ contains an independent set $I$ with large $\ell_n(I)$ with probability $o(1)$. The second shows that the expected number of short cycles in $U_{n,d}^{(k)}$ is small.

\begin{lemma}\label{lem:heavy-independent}
Fix $k\ge2$ and $0<\varepsilon\leq\frac{1}{4}$. For all sufficiently large $d$,
\[
\pr\left(\exists I\in\mathcal I(U_{n,d}^{(k)}) \text{ such that }
\ell_n(I)\ge(1+\varepsilon)a_{k,d}n\right)=o_{n}(1).
\]
\end{lemma}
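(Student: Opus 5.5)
We will prove Lemma~\ref{lem:heavy-independent} by a first-moment (union bound) argument over candidate sets $S$, combined with the fixed-set estimate described in the overview (Proposition~\ref{prop:fixed-set}) and the harmonic-energy inequality (Proposition~\ref{prop:harmonic-energy}). First we reduce to \emph{minimal} heavy sets. Every subset of an independent set is independent, and $\ell_n(\cdot)$ is additive with each $\ell_n(i)\le H_n\le 2\log n$. Hence if some independent $I$ has $\ell_n(I)\ge(1+\eps)a_{k,d}n$, we can delete vertices until we reach an independent $S$ with
\[
(1+\eps)a_{k,d}n\le \ell_n(S)\le (1+\eps)a_{k,d}n+2\log n .
\]
So it suffices to bound, for each size $s=|S|$, the expected number of independent sets $S$ of size $s$ with $\ell_n(S)$ in this window. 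Since $\ell_n(i)\approx\log(n/i)$, the constraint $\ell_n(S)\approx(1+\eps)a_{k,d}n$ together with concavity also forces $s\ge \ell_n(S)/\max\ell_n$ and, more usefully, $s\le C a_{k,d} n\log(1/a_{k,d})$. This last bound holds because a set with many elements in the top range of $[n]$ has small $\ell_n$, and too-large sets are automatically heavy only at larger weight. In any case we may restrict to $s\ge \ell_n(S)/(2\log n)$.

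For fixed $S$, write $L=\ell_n(S)$. The overview bound gives
\[
\pr(S\text{ independent})\le\exp\Big(-d s^{2-k}\big(\tfrac{L^2+s^2}{2n}-(k-1)H_n\big)^{k-1}\Big).
\]
Here $(k-1)H_n=O(\log n)$ is negligible compared with $L^2/n=\Theta(a_{k,d}^2 n)$, so the exponent is at least $(1-o(1))\,d\,s^{2-k}\big(\tfrac{L^2+s^2}{2n}\big)^{k-1}$. Set $x=L/n\approx(1+\eps)a$ and $y=s/n$. The exponent is then $n\,d\,y^{2-k}\big(\tfrac{x^2+y^2}{2}\big)^{k-1}$. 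By AM--GM, $(x^2+y^2)/2\ge xy$, so this is at least $n\,d\,x^{k-1}\,y\,\big((x^2+y^2)/(2xy)\big)^{k-1}$.

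The number of sets is at most $\binom{n}{s}\le\exp\big(ny\log(e/y)\big)$. We compare the two terms.
\begin{itemize}
\item In the critical regime $y\approx x$, the probability exponent is about $n\,d\,x^{k-1}y=(1+\eps)^{k-1}\frac{\log d}{k-1}\,ny$. The entropy term is $ny\log(e/y)\approx ny\,\frac{\log d}{k-1}(1+o(1))$, since $\log(1/a_{k,d})=\frac{\log d}{k-1}(1+o(1))$. The factor $(1+\eps)^{k-1}>1+\eps$ therefore wins, giving a bound of $\exp(-\Omega(\eps\, ny\log d))$.
\item For $y\ll x$, the factor $\big((x^2+y^2)/(2xy)\big)^{k-1}\ge (x/2y)^{k-1}$ overwhelms $\log(e/y)$.
\item For $y\gg x$, the $y^2$ term dominates and gives an exponent of order $d\,y^{k}n\gg ny\log(1/y)$.
\end{itemize}

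Summing $\exp(-\Omega(\eps\,s\log d))$ over the polynomially many values of $s\ge L/(2\log n)=\Omega(a n/\log n)$ gives $o_n(1)$.

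The main obstacle is the uniform comparison of the entropy term against the probability exponent across all $y$, with the sharp constant preserved near $y\approx x$. In particular, we must check that $\log(1/y)\le(1+o_d(1))\frac{\log d}{k-1}$ holds throughout the relevant range, or else compensate via the $(x/y)^{k-1}$ gain when $y$ is small. We will handle this by splitting into the ranges $y\le x/\log d$, $x/\log d<y<x\log d$, and $y\ge x\log d$, using the smallness of $\eps\le 1/4$ only to absorb the $1+o(1)$ losses.
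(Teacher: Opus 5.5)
Your proof is correct and has the same skeleton as the paper's. Both use a union bound over candidate sets with $\binom{n}{s}\le (en/s)^s$, Proposition~\ref{prop:fixed-set}, and the AM--GM step $(x^2+y^2)/2\ge xy$. In your notation, $x=\ell_n(S)/n$ and $y=s/n$, and AM--GM turns the exponent into $nd\,x^{k-1}y\ge(1+\varepsilon)^{k-1}\frac{\log d}{k-1}\,s$. The difference is how the entropy factor $\log(e/y)$ is controlled for small sets.

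The paper observes that, since $\ell_n$ is decreasing, $\ell_n(S)\le\ell_n([s])=s(1+H_n-H_s)\le s(1+\log(n/s))$. So every heavy set automatically satisfies $s/n\ge(1+\varepsilon)a_{k,d}/(2L)$, where $L=\log\bigl(1/((1+\varepsilon)a_{k,d})\bigr)$. Hence $\log(en/s)\le L+1+\log(2L)\le(1+\varepsilon/2)\frac{\log d}{k-1}$ for every relevant $s$. The single AM--GM estimate then finishes the argument with no case split. The $(k-1)H_n/n$ error is absorbed into $xy$, which is bounded below by a positive constant once $s/n$ is.

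You use only the trivial bound $s\ge \ell_n(S)/H_n$. You recover the loss when $t=x/y\ge\log d$ from the extra factor $\bigl((x^2+y^2)/(2xy)\bigr)^{k-1}\ge(t/2)^{k-1}$, and this works:
\begin{itemize}
\item per element, the exponent is at least $(1-o_n(1))(1+\varepsilon)^{k-1}\frac{\log d}{k-1}(t/2)^{k-1}$, which grows at least linearly in $t$;
\item per element, the entropy is at most $1+\log(1/a_{k,d})+\log t$;
\item for $y>x/\log d$ you have $\log(1/y)\le\log(1/a_{k,d})+\log\log d=(1+o_d(1))\frac{\log d}{k-1}$.
\end{itemize}
So your route avoids the structural bound on $\ell_n([s])$ at the cost of a case analysis. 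The paper's route is uniform and shorter.

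Several side claims should be fixed or dropped.
\begin{itemize}
\item The assertion $s\le Ca_{k,d}n\log(1/a_{k,d})$ is false. The block $\{n-s+1,\dots,n\}$ has $\ell_n\approx s^2/(2n)$, so sets of size about $\sqrt{2(1+\varepsilon)a_{k,d}}\,n$ have weight $(1+\varepsilon)a_{k,d}n$. You never use this bound.
\item Your third range $y\gg x$ is unnecessary. The middle-range estimate already covers all $y>x/\log d$; for $y\ge x$ one even has $\log(1/y)\le\log(1/a_{k,d})$.
\item The inequality $(1+\varepsilon)^{k-1}>1+\varepsilon$ fails for $k=2$. You only need $(1+\varepsilon)^{k-1}\ge 1+\varepsilon$, which still beats $1+o_d(1)$.
\item The reduction to minimal heavy sets is harmless but unnecessary, since only the lower bound $x\ge(1+\varepsilon)a_{k,d}$ is ever used.
\end{itemize}
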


\begin{lemma}\label{lem:short-cycles}
For $q\ge2$, let $C_q$ be the number of cycles of length $q$ in $U_{n,d}^{(k)}$. Then
\begin{equation}\label{eq:cycle-bound}
\mathbb E [C_q]
\le 2\cdot 4^{q-2}(k-1)^{2q}d^qH_n.
\end{equation}
\end{lemma}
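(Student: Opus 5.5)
We bound $\E[C_q]$ by summing, over all sequences of distinct core vertices $v_1,\dots,v_q\in[n]$ and all choices of edges $e_1,\dots,e_q$ with $\{v_{i},v_{i+1}\}\subseteq e_i$ (indices mod $q$), the probability that all these edges are present. Each $q$-cycle is counted at least once this way, possibly several times, which only helps an upper bound. The first step is a single-edge estimate. Fix two distinct vertices $u<w$. We bound the expected number of edges containing both, weighted by the probability of their presence, in a form that factorizes across the cycle.

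An edge $e$ is present only if $e\setminus\{\max e\}\in N^-_{\max e}$. For fixed $m=\max e$ the sets $N^-_m$ are independent across $m$, so distinct edges with distinct maxima are present independently. For a given $(k-1)$-set the inclusion probability is at most $d/\binom{m}{k-1}$. Several edges of the cycle may share a maximum, but within one $N^-_m$ the inclusion events are negatively correlated (uniform $d$-subset). Hence the joint probability is at most the product of marginals $\prod_i d/\binom{\max e_i}{k-1}$.

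We then sum over edges containing $u,w$ according to $m=\max e$.

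\emph{Case $m=w$.} There are $\binom{w-2}{k-2}$ choices (including vertex $0$ only increases the count), giving a contribution of at most $d\binom{w-1}{k-2}/\binom{w}{k-1}\le (k-1)d/w$.

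\emph{Case $m>w$.} There are $\binom{m-3}{k-3}$ choices, giving
\[
d\sum_{m>w}\binom{m-1}{k-3}\Big/\binom{m}{k-1}\le d(k-1)(k-2)\sum_{m>w}\frac{1}{m^2}\le \frac{(k-1)(k-2)d}{w}.
\]

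Together these give $\sum_{e\ni u,w}\pr(e\in\cH)\le (k-1)^2 d/\max\{u,w\}$, the adjacency bound claimed in the overview. Since the edges $e_i$ are distinct by the definition of a cycle, the multi-edge joint bound lets us sum each $e_i$ independently. We obtain
\[
\E[C_q]\le (k-1)^{2q}d^q\,\Tr(A^q),\qquad A_{ij}=\frac{1}{\max\{i,j\}}.
\]

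It remains to show $\Tr(A^q)\le 2\cdot4^{q-2}H_n$. Write $\Tr(A^q)=\Tr(A^2A^{q-2})\le \Tr(A^2)\,\|A\|^{q-2}$, valid since $A$ is symmetric and entrywise positive, so $A^2$ is PSD. First, $\Tr(A^2)=\sum_{i,j}\max\{i,j\}^{-2}=\sum_m (2m-1)/m^2\le 2H_n$. Second, we bound $\|A\|\le4$ by Schur's test with test vector $x_j=j^{-1/2}$:
\[
\sum_j \frac{j^{-1/2}}{\max\{i,j\}}=\frac1i\sum_{j\le i}j^{-1/2}+\sum_{j>i}j^{-3/2}\le \frac{2\sqrt i}{i}+\frac{2}{\sqrt i}=4\,i^{-1/2}.
\]
Hence $\|A\|\le 4$, which yields the stated bound.

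The main obstacle is the correlation step. We must justify that the probability that all $q$ edges are present is at most the product of the marginals, even when several edges share the same maximum vertex. This needs the negative correlation of inclusions in a uniform random $d$-subset. The $m>w$ edge count must also be kept tight enough to produce the factor $(k-1)^2$. The degenerate case $\binom{m}{k-1}\le d$, where the inclusion probability is $1$, is still covered by the bound $d/\binom{m}{k-1}\ge 1$.
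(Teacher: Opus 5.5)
Your argument is essentially the paper's. The common steps are: the product-of-marginals bound for distinct edges, the pair-kernel estimate $(k-1)^2d/\max\{u,w\}$, the reduction to $\operatorname{Tr}(A^q)$, Schur's test with $x_i=i^{-1/2}$, and $\operatorname{Tr}(A^q)\le\|A\|^{q-2}\operatorname{Tr}(A^2)\le 2\cdot 4^{q-2}H_n$. The correlation step you flag as the main obstacle is a one-line computation: $\Pr(T_1,\dots,T_t\in N_m^-)=(d)_t/\bigl(\tbinom{m}{k-1}\bigr)_t\le\bigl(d/\tbinom{m}{k-1}\bigr)^t$.

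There is one arithmetic slip, in the case $m>w$. The displayed inequality $\binom{m-1}{k-3}/\binom{m}{k-1}\le (k-1)(k-2)/m^2$ is false, since the left side equals $\frac{(k-1)(k-2)}{m(m-k+2)}$. Your numerator $\binom{m-1}{k-3}$ also does not match your stated count $\binom{m-3}{k-3}$. With the correct count $\binom{m-2}{k-3}$ (vertex $0$ allowed), the ratio is $\frac{(k-1)(k-2)}{m(m-1)}$, which still exceeds $(k-1)(k-2)/m^2$. For $k=3$, the $1/m^2$ bound fails even for the exact count, where the ratio is $1/\binom{m}{2}=2/(m(m-1))$.

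The fix is to telescope instead: $\sum_{m=w+1}^{n}\frac{1}{m(m-1)}=\frac1w-\frac1n\le\frac1w$, exactly as the paper does. This gives the same conclusion $(k-1)(k-2)d/w$, and the rest of your proof goes through unchanged.
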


Before we prove these results, we show how they imply Theorem~\ref{thm: degenerate}\ref{item: degenerate lb}.

\begin{proof}[Proof of Theorem~\ref{thm: degenerate}\ref{item: degenerate lb}]
It suffices to consider $0 < \varepsilon \le \frac{1}{4}$. 
Choose $d$ sufficiently large to satisfy Lemma~\ref{lem:heavy-independent}, and then choose $n$ sufficiently large that all subsequent estimates hold.

By Lemma~\ref{lem:heavy-independent}, with probability at least $3/4$, every independent set $I$ of $U_{n,d}^{(k)}$ satisfies
\begin{equation}\label{eq:no-heavy-independent}
    \ell_n(I) \le (1+\varepsilon)a_{k,d}n.
\end{equation}
On this event, define the weight function
\begin{equation}\label{eq:fractional-clique}
    f(i) \coloneqq \frac{\ell_n(i)}{(1+\varepsilon)a_{k,d}n}, \qquad i \in [n].
\end{equation}
For every independent set $I$ in $U_{n,d}^{(k)}$, we have
\[
\sum_{i \in I} f(i) = \frac{\ell_n(I)}{(1+\varepsilon)a_{k,d}n} \le 1,
\]
which implies that $f$ is a fractional clique in $U_{n,d}^{(k)}$.

For each $2 \le q < g$, let $C_q$ denote the number of $q$-cycles in $U_{n,d}^{(k)}$. Select one vertex from each such cycle, and let $R$ be the set of selected vertices. Define the subhypergraph
\[
\cH \coloneqq U_{n,d}^{(k)}[[n] \setminus R].
\]
Since $R$ hits every cycle of length less than $g$, the hypergraph $\cH$ has girth at least $g$. Furthermore, $\cH$ remains $k$-uniform and $d$-degenerate.

Since $\ell_n(i) \le H_n$ for all $i \in [n]$, the total weight deleted is bounded by
\[
\ell_n(R) \le H_n \sum_{q=2}^{g-1} C_q.
\]
Applying Lemma~\ref{lem:short-cycles} and using \(H_n=O(\log n)\), we obtain
\begin{equation}\label{eq:deleted-weight-expectation}
    \mathbb{E}[\ell_n(R)] \le H_n^2 \sum_{q=2}^{g-1} 2 \cdot 4^{q-2}(k-1)^{2q}d^q \le \frac{\varepsilon^2 n}{4}
\end{equation}
for sufficiently large $n$. Thus, Markov's inequality yields
\begin{equation}\label{eq:small-deletion-event}
    \Pr(\ell_n(R) \le \varepsilon^2 n) \ge 3/4.
\end{equation}

By a union bound, events \eqref{eq:no-heavy-independent} and \eqref{eq:small-deletion-event} hold simultaneously with positive probability. Fix an outcome in their intersection. Since every independent set of $\cH$ is also independent in $U_{n,d}^{(k)}$, the restriction of $f$ to $V(\cH)$ is a fractional clique in $\cH$. Applying weak LP duality together with \eqref{eq:total-harmonic-weight} and \eqref{eq:fractional-clique}, we get
\begin{align*}
    \chi_f(\cH) &\ge \sum_{i \in V(\cH)} f(i) 
    = \frac{\ell_n([n]) - \ell_n(R)}{(1+\varepsilon)a_{k,d}n} \\
    &\ge \frac{1 - \varepsilon^2}{1 + \varepsilon} \, a_{k,d}^{-1} 
    = (1 - \varepsilon) \left( (k-1) \frac{d}{\log d} \right)^{\frac{1}{k-1}}.
\end{align*}

Finally, if $\cH$ has fewer than $n$ vertices, we add isolated vertices until $|V(\cH)| = n$. This preserves $k$-uniformity, $d$-degeneracy, and girth without decreasing the fractional chromatic number, completing the proof.
\end{proof}

We will use the following inequality from \cite{high_girth} to prove Lemma~\ref{lem:heavy-independent}.
\begin{proposition}[{\cite[Proposition~3.4]{high_girth}}]\label{prop:harmonic-energy}
For every $S\subseteq[n]$,
\begin{equation}\label{eq:harmonic-energy}
 \sum_{j\in S}\frac{|S\cap[j]|}{j}
 \ge \frac{\ell_n(S)^2+|S|^2}{2n}.
\end{equation}
\end{proposition}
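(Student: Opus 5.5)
Write $S=\{s_1<\dots<s_m\}$ with $m=|S|$, and let $c_i\coloneqq|S\cap[i]|$ for $0\le i\le n$, so $c_0=0$, $c_n=m$, and $c_i\le i$. Both sides of \eqref{eq:harmonic-energy} can be written in terms of the counting function $c$. Changing the order of summation gives
\[
\ell_n(S)=\sum_{s\in S}\sum_{i=s}^n\frac1i=\sum_{i=1}^n\frac{c_i}{i},
\]
while the left-hand side of \eqref{eq:harmonic-energy} is $L\coloneqq\sum_{j\in S}c_j/j$. The plan is to compare $L$ with a quadratic form in the $c_i$ via Abel summation, and then bound $\ell_n(S)^2$ by the same quadratic form via Cauchy--Schwarz.

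\emph{Step 1 (Abel summation).} Note that $c_i^2-c_{i-1}^2=(2c_i-1)\I[i\in S]$. Summing by parts against the weights $1/i$ gives
\[
\sum_{i=1}^n\frac{c_i^2}{i(i+1)}+\frac{m^2}{n+1}=\sum_{j\in S}\frac{2c_j-1}{j}=2L-\sum_{j\in S}\frac1j .
\]
Write $Q\coloneqq\sum_i c_i^2/(i(i+1))$. Then
\[
2L=Q+\sum_{j\in S}\frac1j+\frac{m^2}{n+1}.
\]
The same summation applied to $c_i-c_{i-1}=\I[i\in S]$ gives
\[
\sum_{i=1}^n\frac{c_i}{i(i+1)}=\sum_{j\in S}\frac1j-\frac{m}{n+1}.
\]

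\emph{Step 2 (Cauchy--Schwarz).} By Cauchy--Schwarz,
\[
\ell_n(S)^2\le n\sum_{i=1}^n\frac{c_i^2}{i^2}.
\]
Split $1/i^2=1/(i(i+1))+1/(i^2(i+1))$. For the second piece, use $c_i\le i$ to get $c_i^2/(i^2(i+1))\le c_i/(i(i+1))$. Combining with the second identity from Step~1 yields
\[
\frac{\ell_n(S)^2}{n}\le Q+\sum_{j\in S}\frac1j-\frac{m}{n+1}.
\]

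\emph{Step 3 (conclusion).} Substituting Step~2 into the expression for $2L$ from Step~1, it remains to check that
\[
\frac{m^2}{n+1}+\frac{m}{n+1}\ge\frac{m^2}{n}.
\]
This is equivalent to $m\le n$, which holds trivially. This gives $2L\ge(\ell_n(S)^2+m^2)/n$, which is \eqref{eq:harmonic-energy}.

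The only delicate point is Step~2. A naive Cauchy--Schwarz with weights $\sqrt{(i+1)/i}$ loses an additive $H_n$ factor. The fix is to absorb the discrepancy between $1/i^2$ and $1/(i(i+1))$ into the boundary term $\sum_{j\in S}1/j$, using the trivial bound $c_i\le i$; I would double-check the boundary terms of both summations by parts carefully.
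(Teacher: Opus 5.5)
Your proof is correct; I checked every step. With $a_0=0$, summation by parts gives $\sum_{i=1}^n (a_i-a_{i-1})/i=\sum_{i=1}^n a_i/(i(i+1))+a_n/(n+1)$. Applying this to $a_i=c_i^2$ and to $a_i=c_i$ yields both identities in Step~1. In Step~2, the splitting $1/i^2=1/(i(i+1))+1/(i^2(i+1))$ together with $c_i^2/i\le c_i$ gives exactly the stated bound. The final comparison reduces to $n(m+1)\ge m(n+1)$, i.e.\ $m\le n$, and the case $m=0$ is trivial. As a sanity check, for $S=[n]$ every inequality you use is an equality: Cauchy--Schwarz holds with equality since $c_i/i\equiv 1$, $c_i=i$, and $m=n$. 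This matches the fact that the proposition is tight there, since $\sum_{j\le n}1=n=(n^2+n^2)/(2n)$.

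There is no proof in this paper to compare against. The proposition is imported verbatim from \cite[Proposition~3.4]{high_girth} and used only as a black box in the proof of Proposition~\ref{prop:fixed-set}. Your argument is therefore a short, self-contained replacement for that citation, using only Abel summation, one Cauchy--Schwarz, and $c_i\le i$.

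It also ties in with the cycle-counting half of the paper. One has $2L=\mathbf 1_S^{\top}A\,\mathbf 1_S+\sum_{j\in S}1/j$, where $A_{ij}=1/\max\{i,j\}$ is the same matrix that appears in the proof of Lemma~\ref{lem:short-cycles}. Your Step~1 is precisely the decomposition
\[
A=\sum_{i=1}^{n}\frac{1}{i(i+1)}\,\mathbf 1_{[i]}\mathbf 1_{[i]}^{\top}+\frac{1}{n+1}\,\mathbf 1_{[n]}\mathbf 1_{[n]}^{\top}
\]
into nonnegative rank-one pieces, evaluated at $\mathbf 1_S$.
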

We next use Proposition~\ref{prop:harmonic-energy} to bound the probability that a fixed set $S\subseteq[n]$ is independent.

\begin{proposition}\label{prop:fixed-set}
Let $\emptyset\ne S\subseteq[n]$, and suppose
\begin{equation}\label{eq:positive-bracket}
 \frac{\ell_n(S)^2+|S|^2}{2n}\ge (k-1)H_n.
\end{equation}
Then
\begin{equation}\label{eq:fixed-set-bound}
 \Pr(S\text{ is independent in }U_{n,d}^{(k)})
 \le
 \exp\left(-d|S|^{2-k}
 \left(
 \frac{\ell_n(S)^2+|S|^2}{2n}-(k-1)H_n
 \right)^{k-1}\right).
\end{equation}
\end{proposition}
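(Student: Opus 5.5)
Since the families $N_j^{-}$ are chosen independently, $S$ is independent in $U_{n,d}^{(k)}$ if and only if, for every $j\in S$, no $A\in N_j^{-}$ satisfies $A\subseteq S\cap[j-1]$. Here we use that edges containing vertex $0$ are deleted, so only subsets of $S\cap[j-1]$ matter. The probability therefore factorizes as a product over $j\in S$. Write $s_j\coloneqq|S\cap[j]|$, so that $|S\cap[j-1]|=s_j-1$. Let $M_j\coloneqq\binom{j}{k-1}$ be the number of $(k-1)$-subsets of $\{0,\dots,j-1\}$, and let $m_j\coloneqq\binom{s_j-1}{k-1}$ be the number of forbidden subsets.

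If $M_j\le d$, then all subsets are chosen. In that case the factor is $0$ whenever $m_j\ge1$, and $1\le\exp(\cdot)$ otherwise. If $M_j>d$, then $N_j^-$ is a uniform $d$-subset, and the probability of avoiding all $m_j$ forbidden subsets is $\binom{M_j-m_j}{d}/\binom{M_j}{d}\le (1-m_j/M_j)^d\le \exp(-d\,m_j/M_j)$. In both cases the factor for $j$ is at most $\exp(-d\,m_j/M_j)$; when $M_j\le d$ and $m_j\ge1$ this holds because $0\le$ anything.

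So the probability is at most $\exp\bigl(-d\sum_{j\in S}\binom{s_j-1}{k-1}/\binom{j}{k-1}\bigr)$, and it remains to lower-bound this sum. We estimate the ratio termwise by
\[
\frac{\binom{s_j-1}{k-1}}{\binom{j}{k-1}}=\prod_{t=0}^{k-2}\frac{s_j-1-t}{j-t}\ge\Bigl(\frac{s_j-(k-1)}{j}\Bigr)_+^{k-1}.
\]
This inequality is valid because $s_j\le j$, hence $(s_j-1-t)/(j-t)\ge (s_j-1-t)/j\ge (s_j-k+1)/j$ for each factor (when the right side is positive). Put $x_j\coloneqq (s_j-(k-1))/j$.

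Next apply Jensen's inequality (convexity of $x\mapsto x_+^{k-1}$) over the $|S|$ terms:
\[
\sum_{j\in S} (x_j)_+^{k-1}\ge |S|\Bigl(\frac{1}{|S|}\sum_{j\in S}x_j\Bigr)_+^{k-1}=|S|^{2-k}\Bigl(\sum_{j\in S}x_j\Bigr)_+^{k-1}.
\]
Finally,
\[
\sum_{j\in S}x_j=\sum_{j\in S}\frac{s_j}{j}-(k-1)\sum_{j\in S}\frac1j\ge \frac{\ell_n(S)^2+|S|^2}{2n}-(k-1)H_n,
\]
using Proposition~\ref{prop:harmonic-energy} for the first sum and $\sum_{j\in S}1/j\le H_n$ for the second. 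Hypothesis~\eqref{eq:positive-bracket} ensures that this quantity is nonnegative, so raising to the power $k-1$ is monotone, and combining the steps gives \eqref{eq:fixed-set-bound}.

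The main point needing care is the termwise ratio bound together with the subtraction of exactly $k-1$ (rather than a worse constant), since that is what produces the $(k-1)H_n$ correction. A second point is checking that the ``take all subsets'' case $M_j\le d$ and the edge effects from vertex $0$ do not break the inequality $\Pr\le\exp(-d\,m_j/M_j)$. Both cases go through, since vertex $0$ only enlarges $M_j$ (which we account for by using $\binom{j}{k-1}$ in the denominator) and the small case gives probability $0$ or trivially at most $1$.
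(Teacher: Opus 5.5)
Your proposal is correct and follows the paper's proof essentially step for step: the same independent per-vertex factorization with the bound $\exp(-d\,m_j/M_j)$ (including the case $\binom{j}{k-1}\le d$), the same termwise estimate (note $s_j-(k-1)=|S\cap[j-1]|-k+2$), Jensen's inequality, and Proposition~\ref{prop:harmonic-energy}. Applying Jensen directly to $x\mapsto x_+^{k-1}$ instead of first using $\max\{0,t\}\ge t$ is only cosmetic; the one detail to tidy is that for $j<k-1$ the ratio $\binom{s_j-1}{k-1}/\binom{j}{k-1}$ is $0/0$, so restrict the exponent sum to $j\ge k$ as the paper does (those terms have $(x_j)_+=0$ anyway).
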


\begin{proof}
A set $S \subseteq [n]$ is independent if and only if for every $j \in S$ with $j\ge k$, the family $N_j^{-}$ contains no $(k-1)$-set contained in $S$. 

First, suppose that $\binom{j}{k-1} > d$. There are precisely $\binom{\binom{j}{k-1}}{d}$ choices for the family $N_j^{-} \subseteq \binom{\{0,\ldots,j-1\}}{k-1}$, of which $\binom{\binom{j}{k-1} - \binom{|S \cap [j-1]|}{k-1}}{d}$ contain no $(k-1)$-set from $S$. Consequently,
\begin{align}
\pr\left( N_j^{-} \cap \binom{S \cap [j-1]}{k-1} = \emptyset \right)
&= \frac{\binom{\binom{j}{k-1} - \binom{|S \cap [j-1]|}{k-1}}{d}}{\binom{\binom{j}{k-1}}{d}} = \prod_{r=0}^{d-1} \frac{\binom{j}{k-1} - \binom{|S \cap [j-1]|}{k-1} - r}{\binom{j}{k-1} - r} \nonumber \\
&\le \left( 1 - \frac{\binom{|S \cap [j-1]|}{k-1}}{\binom{j}{k-1}} \right)^d \le \exp\left( -d \frac{\binom{|S \cap [j-1]|}{k-1}}{\binom{j}{k-1}} \right). \label{eq:one-vertex}
\end{align}

We observe that \eqref{eq:one-vertex} holds trivially when $\binom{j}{k-1} \le d$ as well. Indeed, if $|S \cap [j-1]| \ge k-1$, then $N_j^-$ is forced to contain all of $\binom{S \cap [j-1]}{k-1} \neq \emptyset$, making the left-hand side of \eqref{eq:one-vertex} zero. Conversely, if $|S \cap [j-1]| < k-1$, then $\binom{|S \cap [j-1]|}{k-1} = 0$, so both sides equal one. Thus, \eqref{eq:one-vertex} holds for all $k \le j \le n$.

Since the families $N_j^{-}$ are selected independently across $j$, it follows from \eqref{eq:one-vertex} that
\begin{equation}\label{eq:independent-product}
\Pr(S \text{ is independent}) \le \exp\left( -d \sum_{\substack{j \in S \\ j \ge k}} \frac{\binom{|S \cap [j-1]|}{k-1}}{\binom{j}{k-1}} \right).
\end{equation}

For every $j \in S$ with \(j\geq k\), we have the bound
\begin{equation}\label{eq:binomial-ratio}
\frac{\binom{|S \cap [j-1]|}{k-1}}{\binom{j}{k-1}} \ge \left( \frac{\max\{0, \, |S \cap [j-1]| - k + 2\}}{j} \right)^{k-1}.
\end{equation}
Indeed, if $|S \cap [j-1]| < k-1$, both sides of \eqref{eq:binomial-ratio} are zero. Otherwise, 
\[
\frac{\binom{|S \cap [j-1]|}{k-1}}{\binom{j}{k-1}} = \frac{|S \cap [j-1]|(|S \cap [j-1]|-1)\cdots(|S \cap [j-1]|-k+2)}{j(j-1)\cdots(j-k+2)} \ge \left( \frac{|S \cap [j-1]| - k + 2}{j} \right)^{k-1}.
\]
Applying \eqref{eq:binomial-ratio} alongside Jensen's inequality yields
\begin{align}
\sum_{\substack{j \in S \\ j \ge k}} \frac{\binom{|S \cap [j-1]|}{k-1}}{\binom{j}{k-1}} & \ge \sum_{\substack{j \in S \\ j \ge k}}\left( \frac{\max\{0, \, |S \cap [j-1]| - k + 2\}}{j} \right)^{k-1}
=\sum_{j \in S}\left( \frac{\max\{0, \, |S \cap [j-1]| - k + 2\}}{j} \right)^{k-1}\\
&\ge |S| \left( \frac{1}{|S|} \sum_{j \in S} \frac{\max\{0, \, |S \cap [j-1]| - k + 2\}}{j} \right)^{k-1}. \label{eq:power-mean}
\end{align}

Since $\max\{0, t\} \ge t$ for all real $t$, the inner sum is bounded below by
\begin{align}
\sum_{j \in S} \frac{\max\{0, \, |S \cap [j-1]| - k + 2\}}{j} 
&\ge \sum_{j \in S} \frac{|S \cap [j-1]| - k + 2}{j} = \sum_{j \in S} \frac{|S \cap [j]| - (k-1)}{j} \nonumber \\
&\ge \frac{\ell_n(S)^2 + |S|^2}{2n} - (k-1)H_n, \label{eq:sum-lower-bound}
\end{align}
where the final step follows from Proposition~\ref{prop:harmonic-energy}.

By \eqref{eq:positive-bracket}, the expression in \eqref{eq:sum-lower-bound} is non-negative. Combining \eqref{eq:independent-product}, \eqref{eq:power-mean}, and \eqref{eq:sum-lower-bound}, we conclude that
\[
\pr\left(S~\text{is independent in}~ U_{n,d}^{(k)}\right) \le \exp\left( -d |S|^{2-k} \left( \frac{\ell_n(S)^2 + |S|^2}{2n} - (k-1)H_n \right)^{k-1} \right),
\]
as desired.
\end{proof}

We are now ready to prove Lemma~\ref{lem:heavy-independent}.

\begin{proof}[Proof of Lemma~\ref{lem:heavy-independent}]
Let
\[
L \coloneqq \log\left( \frac{1}{(1+\varepsilon)a_{k,d}} \right).
\]
Since $a_{k,d} \coloneqq \left(\frac{1}{k-1}\, \frac{\log d}{d}\right)^{\frac{1}{k-1}}$, we have
\[
L = \frac{\log d}{k-1} + 
O(\log\log d).
\]
For $d$ sufficiently large, we may assume that $L \ge 4$ and
\begin{equation}\label{eq:entropy-coefficient}
L + 1 + \log(2L) \le \left(1 + \frac{\varepsilon}{2}\right) \frac{\log d}{k-1}.
\end{equation}

Let $S \subseteq [n]$ satisfy $\ell_n(S) \ge (1+\varepsilon)a_{k,d}n$, and set \[s \coloneqq |S|,\qquad \rho \coloneqq s/n, \qquad \text{and} \qquad y \coloneqq \ell_n(S)/n.\] Since $\ell_n(i)$ is decreasing in $i$, the sum $\ell_n(S)$ among sets of size $s$ is maximized by $[s]$. Hence,
\begin{equation}\label{eq:weight-size}
y \le \frac{\ell_n([s])}{n} = \frac{s}{n}(1 + H_n - H_s) \le \rho \left(1 + \log(1/\rho)\right).
\end{equation}

We claim that
\begin{equation}\label{eq:size-lower}
\rho \ge \frac{(1+\varepsilon)a_{k,d}}{2L}.
\end{equation}
To see this, set $\rho_0 \coloneqq \frac{(1+\varepsilon)a_{k,d}}{2L}$. By the definition of $L$ and the assumption $L \ge 4$, we have
\[
\rho_0 \left(1 + \log(1/\rho_0)\right) = (1+\varepsilon)a_{k,d} \frac{1 + L + \log(2L)}{2L} < (1+\varepsilon)a_{k,d}.
\]
Because $t \mapsto t\left(1 + \log(1/t)\right)$ is strictly increasing on $(0, 1]$, if $\rho < \rho_0$ were to hold, then \eqref{eq:weight-size} would yield
\[
y \le \rho \left(1 + \log(1/\rho)\right) < \rho_0 \left(1 + \log(1/\rho_0)\right) < (1+\varepsilon)a_{k,d},
\]
contradicting our choice of $S$. Thus $\rho \ge \rho_0$, establishing \eqref{eq:size-lower}.

Now choose $n$ sufficiently large that
\begin{equation}\label{eq:harmonic-error}
\frac{(k-1)H_n}{n} \le \frac{\varepsilon}{8} \, \rho (1+\varepsilon)a_{k,d}
\end{equation}
for all $\rho$ satisfying \eqref{eq:size-lower}. This choice is possible because the right-hand side is bounded below by a positive constant depending only on $k, d, \varepsilon$, whereas $H_n/n \to 0$ as $n \to \infty$.

Since $y \ge (1+\varepsilon)a_{k,d}$ by the choice of \(S\), we obtain
\begin{equation}\label{eq:energy-lower}
\frac{\rho^2 + y^2}{2} - \frac{(k-1)H_n}{n} \ge \rho y - \frac{\varepsilon}{8}\rho y = \left(1 - \frac{\varepsilon}{8}\right)\rho y.
\end{equation}
Thus, the hypotheses of Proposition~\ref{prop:fixed-set} are satisfied, implying that
\begin{align}
\Pr(S \text{ is independent}) 
&\le \exp\left( -nd \rho^{2-k} \left( \frac{\rho^2 + y^2}{2} - \frac{(k-1)H_n}{n} \right)^{k-1} \right) \nonumber \\
&\le \exp\left( -nd \left(1 - \frac{\varepsilon}{8}\right)^{k-1} \rho y^{k-1} \right) \nonumber \\
&\le \exp\left( -s \left( \left(1 - \frac{\varepsilon}{8}\right)(1+\varepsilon) \right)^{k-1} \frac{\log d}{k-1} \right). \label{eq:fixed-heavy}
\end{align}
For $0 < \varepsilon \le 1/4$, we have
\[
\left(1 - \frac{\varepsilon}{8}\right)(1+\varepsilon) = 1 + \frac{7\varepsilon}{8} - \frac{\varepsilon^2}{8} \ge 1 + \frac{3\varepsilon}{4}.
\]
Therefore,
\begin{equation}\label{eq:fixed-heavy-simple}
\Pr(S \text{ is independent}) \le \exp\left( -\left(1 + \frac{3\varepsilon}{4}\right) \frac{s\log d}{k-1} \right).
\end{equation}

It remains to take a union bound over all candidate sets $S$. Using standard binary entropy bounds, we have $\binom{n}{s} \le e^{n h(\rho)}$, where
\[
h(\rho) \coloneqq \rho \log(1/\rho) + (1-\rho)\log\left(\frac{1}{1-\rho}\right) \le \rho \left(1 + \log(1/\rho)\right).
\]
By \eqref{eq:size-lower}, $\log(1/\rho) \le L + \log(2L)$, so
\[
h(\rho) \le \rho \left(L + 1 + \log(2L)\right).
\]
Applying \eqref{eq:entropy-coefficient}, we obtain
\[
\binom{n}{s} \le \exp\left( \left(1 + \frac{\varepsilon}{2}\right) \frac{s\log d}{k-1} \right).
\]

Combining this with \eqref{eq:fixed-heavy-simple}, the expected number of independent sets $S$ of size $s$ satisfying $\ell_n(S) \ge (1+\varepsilon)a_{k,d}n$ is
\[
\sum_{\substack{S \subseteq [n],\ |S| = s \\ \ell_n(S) \ge (1+\varepsilon)a_{k,d}n}} \Pr(S \text{ is independent}) \le \exp\left( -\frac{\varepsilon}{4(k-1)} s \log d \right).
\]
Summing over all $s \ge \frac{(1+\varepsilon)a_{k,d}}{2L}n$ via \eqref{eq:size-lower}, we conclude that
\begin{align*}
\pr\left( \exists I \in \mathcal{I}(U_{n,d}^{(k)})~\text{such that}~\ell_n(I) \ge (1+\varepsilon)a_{k,d}n \right) 
&\ \le \sum_{s \ge \frac{(1+\varepsilon)a_{k,d}}{2L} n} \exp\left( -\frac{\varepsilon}{4(k-1)} s \log d \right) \\
&\ = o_n(1),
\end{align*}
completing the proof.
\end{proof}

Finally, we present the proof of Lemma \ref{lem:short-cycles}.

\begin{proof}[Proof of Lemma \ref{lem:short-cycles}]
It is convenient first to retain vertex $0$; denote the resulting random hypergraph by $\widetilde{U}^{(k)}_{n,d}$. 
Let $e \subseteq \{0, \ldots, n\}$ be a $k$-set whose maximum vertex is $j \in [n]$, and define the weight function $w(e) \coloneqq \frac{d}{\binom{j}{k-1}}$.

If $e_1, \ldots, e_t$ are distinct $k$-sets of this form, then
\begin{equation}\label{eq:joint-edge-bound}
\pr\left( e_1, \ldots, e_t \in E(\widetilde{U}^{(k)}_{n,d}) \right) \le \prod_{r=1}^t w(e_r).
\end{equation}
Indeed, if $m$ of the prescribed edges have maximum vertex $j$, then whenever $\binom{j}{k-1} > d$, the probability that all corresponding lower $(k-1)$-sets are selected is
\[
\frac{(d)_m}{\left( \binom{j}{k-1} \right)_m} \le \left( \frac{d}{\binom{j}{k-1}} \right)^m,
\]
where $(z)_m = z(z-1)\cdots(z-m+1)$ denotes the falling factorial. If $\binom{j}{k-1} \le d$, the right-hand side is at least one. Since edge selections at distinct maximum vertices are mutually independent, \eqref{eq:joint-edge-bound} follows.

For distinct $a, b \in [n]$, define the kernel sum
\[
K(a,b) \coloneqq \sum_{\substack{e \subseteq \{0, \ldots, n\} \\ |e| = k, \, a,b \in e}} w(e),
\]
and set $K(a,a) \coloneqq 0$. Let $m \coloneqq \max\{a,b\}$. The contribution to $K(a,b)$ from edges whose maximum vertex is $m$ is zero if $m<k-1$, and otherwise is at most
\[
\binom{m-1}{k-2} \frac{d}{\binom{m}{k-1}} = \frac{(k-1)d}{m}.
\]
For $j>m$, this contribution is zero if $k=2$ or $j<k-1$; otherwise it is at most
\[
\binom{j-2}{k-3} \frac{d}{\binom{j}{k-1}} = \frac{(k-1)(k-2)d}{j(j-1)},
\]
In all cases, the contributions are bounded by the respective expressions on the right.\footnote{In particular, the second contribution is zero when $k=2$ and does not appear in the argument of \cite[Lemma~3.3]{high_girth}.} Telescoping the sum over $j>m$, we obtain
\begin{align}\label{eq:pair-kernel}
K(a,b) &\le \frac{(k-1)d}{m} + (k-1)(k-2)d \sum_{j=m+1}^{n} \frac{1}{j(j-1)} \nonumber \\
&= \frac{(k-1)d}{m} + (k-1)(k-2)d \sum_{j=m+1}^{n} \left( \frac{1}{j-1} - \frac{1}{j} \right) \nonumber \\
&= \frac{(k-1)d}{m} + (k-1)(k-2)d \left( \frac{1}{m} - \frac{1}{n} \right) \nonumber \\
&\le \frac{(k-1)^2 d}{m} = \frac{(k-1)^2 d}{\max\{a,b\}}.
\end{align}

Every $q$-cycle yields an ordered sequence of distinct core vertices $v_1, \ldots, v_q \in [n]$ and distinct edges $e_1, \ldots, e_q$ such that $\{v_i, v_{i+1}\} \subseteq e_i$ for $1 \le i \le q$ (with indices taken modulo $q$). Applying \eqref{eq:joint-edge-bound} and dropping the distinctness conditions except $v_i\ne v_{i+1}$ for every $i$ gives
\begin{align}\label{eq:trace-reduction}
\mathbb{E}[C_q] &\leq
\sum_{\substack{v_1,\ldots,v_q\in[n]\\v_i\ne v_{i+1}\ \forall i}}
\ \sum_{\substack{e_1,\ldots,e_q\\
                  |e_i|=k,\ \{v_i,v_{i+1}\}\subseteq e_i\ \forall i}}
\prod_{i=1}^q w(e_i)=
\sum_{\substack{v_1,\ldots,v_q\in[n]\\v_i\ne v_{i+1}\ \forall i}}
\prod_{i=1}^q
\left(
\sum_{\substack{e_i\subseteq\{0,\ldots,n\}\\
                |e_i|=k,\ \{v_i,v_{i+1}\}\subseteq e_i}}
w(e_i)
\right)\\
&= \sum_{v_1, \ldots, v_q \in [n]} \prod_{i=1}^q K(v_i, v_{i+1}) 
\le \left((k-1)^2 d\right)^q \sum_{v_1, \ldots, v_q \in [n]} \prod_{i=1}^q \frac{1}{\max\{v_i, v_{i+1}\}} = \left((k-1)^2 d\right)^q \operatorname{Tr}(A^q),
\end{align}
where the second equality uses $K(v,v)=0$, and $A$ is the $n \times n$ real symmetric matrix defined by $A_{ij} \coloneqq \frac{1}{\max\{i,j\}}$.

We claim that $\Tr(A^q) \le 2 \cdot 4^{q-2} H_n$.
We note that the bound $\Tr(A^q) \leq \binom{2q-2}{q-1}H_n$ was established in the proof of \cite[Lemma~3.3]{high_girth} via an intricate combinatorial analysis relying on Catalan numbers.
Here, we present an alternative and considerably simpler proof of this slightly weaker bound via matrix-norm analysis.
To bound the spectral radius $\|A\|_2$, we employ Schur's test~\cite{schur1911bemerkungen} (see also~\cite[Theorem~8.3.2]{heil2018metrics}) with the test vector $u \in \mathbb{R}_+^n$ given by $u_i = i^{-1/2}$. For every $i \in [n]$, we have
\[
\sum_{j=1}^n A_{ij} u_j = \frac{1}{i} \sum_{j=1}^i j^{-1/2} + \sum_{j=i+1}^n j^{-3/2}.
\]
Induction easily yields $\sum_{j=1}^i j^{-1/2} \le 2\sqrt{i}$, while integral comparison gives $\sum_{j=i+1}^n j^{-3/2} \le \int_i^\infty x^{-3/2} \, dx = \frac{2}{\sqrt{i}}$. Summing these bounds, we get
\[
\sum_{j=1}^n A_{ij} u_j \le \frac{4}{\sqrt{i}} = 4 u_i.
\]
By Schur's test, $\|A\|_2 \le 4$.

Letting $\lambda_1, \ldots, \lambda_n$ denote the real eigenvalues of $A$, we have
\[
\operatorname{Tr}(A^q) \le \sum_{r=1}^n |\lambda_r|^q \le \|A\|_2^{q-2} \sum_{r=1}^n \lambda_r^2 \le 4^{q-2} \operatorname{Tr}(A^2).
\]
Finally, using $\sum_{j=i+1}^n j^{-2} \le \int_i^\infty x^{-2} \, dx = \frac{1}{i}$, we calculate
\[
\operatorname{Tr}(A^2) = \sum_{i=1}^n \sum_{j=1}^n \frac{1}{\max\{i,j\}^2} = \sum_{i=1}^n \left( \frac{1}{i} + \sum_{j=i+1}^n \frac{1}{j^2} \right) \le 2 \sum_{i=1}^n \frac{1}{i} = 2 H_n.
\]
Thus, $\operatorname{Tr}(A^q) \le 2 \cdot 4^{q-2} H_n$. Substituting this bound into \eqref{eq:trace-reduction} yields
\[
\mathbb{E}[C_q] \le 2 \cdot 4^{q-2} (k-1)^{2q} d^q H_n,
\]
which completes the proof.
\end{proof}

\section{Concluding remarks}\label{section: concluding remarks}

As mentioned in \S\ref{subsection: main results}, our analysis of the upper-bound results (Theorems~\ref{thm: degenerate}\ref{item: degenerate ub} and~\ref{thm: local demand}) does not strictly require uncrowdedness.
Indeed, the only place in the analysis of Algorithm~\ref{algorithm: iset uncrowded} that uses the absence of $3$- and $4$-cycles is the proof of Claim~\ref{claim: S_i}.
For a linear hypergraph, the proof of that claim, and hence of Theorems~\ref{thm: degenerate}\ref{item: degenerate ub} and~\ref{thm: local demand}, goes through whenever the following cycle configurations are absent with respect to the given vertex ordering:
\begin{enumerate}[label=(Conf\arabic*)]
    \item\label{item: 3 cycle type} A $3$-cycle whose edges can be labeled $e, f, g$ so that the rightmost vertex $v$ of $e \cup f \cup g$ lies in $e$, and the rightmost vertex of at least one of $f$ and $g$ also lies in $e$; see Fig.~\ref{fig:configuration-A} for an illustration.

    \item\label{item: 4 cycle type} A $4$-cycle with edges $e, g, h, f$ in cyclic order such that the rightmost vertex $v$ of their union lies in $e \cap f$, and the rightmost vertices of $g$ and $h$ lie in $e$ and $f$, respectively; see Fig.~\ref{fig:configuration-B} for an illustration.

\begin{figure}[htb!]
    \centering

\begin{subfigure}[t]{0.48\textwidth}
    \centering

\begin{tikzpicture}[
    scale=1.0,
    every node/.style={font=\fontsize{12}{14}\selectfont}
]

\tikzset{
    vert/.style={
        circle,
        fill=black,
        draw=black,
        line width=0.15pt,
        inner sep=0pt,
        minimum size=2.5pt
    },
    rvert/.style={
        circle,
        fill=red!80!black,
        draw=red!80!black,
        line width=0.15pt,
        inner sep=0pt,
        minimum size=2.5pt
    }
}

\coordinate (a) at (-1.100,0.000);
\coordinate (u) at (1.100,0.000);
\coordinate (b) at (0.000,3.111);
\coordinate (v) at (1.650,0.000);

\draw
    (0.000,0.000) ellipse (2.15 and 0.25);

\node[font=\scriptsize] at (0.000,-0.520) {$e$};

\node[vert] at (-1.650,0.000) {};
\node[vert] at (a) {};
\node[vert] at (-0.550,0.000) {};
\node[
    font=\scriptsize
] at (0.000,0.000) {$\cdots$};
\node[vert] at (0.550,0.000) {};
\node[rvert] at (u) {};
\node[rvert] at (v) {};

\node[font=\scriptsize] at (0.950,-0.400) {$u$};
\node[font=\scriptsize] at (1.800,-0.300) {$v$};

\draw[
    rotate around={70.529:(-0.550,1.556)}
]
    (-0.550,1.556) ellipse (2.15 and 0.25);

\node[font=\scriptsize] at (-1.069,1.739) {$g$};

\node[vert] at (a) {};
\node[vert] at (-0.917,0.519) {};
\node[
    font=\scriptsize,
    rotate=70.529
] at (-0.733,1.037) {$\cdots$};
\node[vert] at (-0.550,1.556) {};
\node[vert] at (-0.367,2.074) {};
\node[vert] at (-0.183,2.593) {};
\node[vert] at (b) {};

\draw[
    rotate around={109.471:(0.550,1.556)}
]
    (0.550,1.556) ellipse (2.15 and 0.25);

\node[font=\scriptsize] at (1.069,1.739) {$f$};

\node[vert] at (0.917,0.519) {};
\node[
    font=\scriptsize,
    rotate=-70.529
] at (0.733,1.037) {$\cdots$};
\node[vert] at (0.550,1.556) {};
\node[vert] at (0.367,2.074) {};
\node[vert] at (0.183,2.593) {};
\node[vert] at (b) {};

\end{tikzpicture}

\caption{
A forbidden $3$-cycle configuration. Here, $v$ is the rightmost vertex of $e\cup f\cup g$,
and $u$ is the rightmost vertex of $f$.
Note that we may have $u=v$.
}
\label{fig:configuration-A}
\end{subfigure}
    \hfill
\begin{subfigure}[t]{0.48\textwidth}
    \centering

\begin{tikzpicture}[
    scale=1.0,
    every node/.style={font=\fontsize{12}{14}\selectfont}
]

\tikzset{
    vert/.style={
        circle,
        fill=black,
        draw=black,
        line width=0.15pt,
        inner sep=0pt,
        minimum size=2.5pt
    },
    rvert/.style={
        circle,
        fill=red!80!black,
        draw=red!80!black,
        line width=0.15pt,
        inner sep=0pt,
        minimum size=2.5pt
    }
}

\coordinate (x) at (0.000,0.000);
\coordinate (w) at (1.968,-0.984);
\coordinate (u) at (1.635,1.472);
\coordinate (v) at (3.603,0.488);

\draw[
    rotate around={-26.565:(0.492,-0.246)}
]
    (0.492,-0.246) ellipse (2.15 and 0.25);

\node[font=\scriptsize] at (-1.603,0.801) {$g$};

\node[vert] at (-0.984,0.492) {};
\node[vert] at (-0.492,0.246) {};
\node[vert] at (x) {};
\node[
    font=\scriptsize,
    rotate=-26.565
] at (0.492,-0.246) {$\cdots$};
\node[vert] at (0.984,-0.492) {};
\node[vert] at (1.476,-0.738) {};
\node[rvert] at (w) {};

\node[font=\scriptsize] at (2.100,-1.400) {$w$};

\draw[
    rotate around={41.987:(0.409,0.368)}
]
    (0.409,0.368) ellipse (2.15 and 0.25);

\node[font=\scriptsize] at (-1.332,-1.198) {$h$};

\node[vert] at (-0.818,-0.736) {};
\node[vert] at (-0.409,-0.368) {};
\node[vert] at (x) {};
\node[
    font=\scriptsize,
    rotate=41.987
] at (0.409,0.368) {$\cdots$};
\node[vert] at (0.818,0.736) {};
\node[vert] at (1.226,1.104) {};
\node[rvert] at (u) {};

\node[font=\scriptsize] at (1.650,1.900) {$u$};

\draw[
    rotate around={-26.565:(2.127,1.226)}
]
    (2.127,1.226) ellipse (2.15 and 0.25);

\node[font=\scriptsize] at (0.032,2.273) {$f$};

\node[vert] at (0.651,1.964) {};
\node[vert] at (1.143,1.718) {};
\node[
    font=\scriptsize,
    rotate=-26.565
] at (2.127,1.226) {$\cdots$};
\node[vert] at (2.619,0.980) {};
\node[vert] at (3.111,0.734) {};

\draw[
    rotate around={41.987:(2.377,-0.616)}
]
    (2.377,-0.616) ellipse (2.15 and 0.25);

\node[font=\scriptsize] at (0.636,-2.182) {$e$};

\node[vert] at (1.150,-1.720) {};
\node[vert] at (1.559,-1.352) {};
\node[
    font=\scriptsize,
    rotate=41.987
] at (2.377,-0.616) {$\cdots$};
\node[vert] at (2.785,-0.248) {};
\node[vert] at (3.194,0.120) {};
\node[rvert] at (v) {};

\node[font=\scriptsize] at (4.020,0.530) {$v$};

\end{tikzpicture}
\caption{
A forbidden $4$-cycle configuration. Here, $v$ is the rightmost vertex of $e\cup f\cup g\cup h$,
while $u$ and $w$ are the rightmost vertices of $h$ and $g$,
respectively.
}
\label{fig:configuration-B}
\end{subfigure}

\caption{The forbidden cycle configurations.}

\end{figure}

\end{enumerate}
For Theorem~\ref{thm: degenerate}\ref{item: degenerate ub}, this must be a degeneracy ordering, whereas for Theorem~\ref{thm: local demand}, it must be nondecreasing with respect to the prescribed demands.
Notably, for a $d$-degenerate linear hypergraph, each vertex is the rightmost vertex in ``few'' such cycles under any degeneracy ordering.
Therefore, combining this relaxation with random sampling in the style of Duke, Lefmann, and R\"odl~\cite{duke1995uncrowded} yields the following corollary.

\begin{corollary}\label{corollary: degenerate linear}
    For all $\eps > 0$ and integers $k \ge 3$, there exists $d_0 \in \N$ such that the following holds for all $d \ge d_0$ and $n \in \N$. Let $\cH$ be an $n$-vertex $k$-uniform $d$-degenerate linear hypergraph. Then
    \[
    \chi_f(\cH) \le (1+\eps)\left(\frac{(4k-5)(k-1)}{k-2} \, \frac{d}{\log d}\right)^{\frac{1}{k-1}}.
    \]
    Moreover, there is a $\mathrm{poly}(n, d)$-time algorithm that samples an independent set $I \subseteq V(\cH)$ such that, for each $v \in V(\cH)$,
    \[
    \pr(v \in I) \ge (1-\eps)\left(\frac{k-2}{(4k-5)(k-1)} \, \frac{\log d}{d}\right)^{\frac{1}{k-1}}.
    \]
\end{corollary}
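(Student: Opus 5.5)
We plan to combine random vertex sampling with the relaxed version of Algorithm~\ref{algorithm: iset uncrowded}. Fix a degeneracy ordering $(v_1,\dots,v_n)$ of $\cH$ and a sampling probability $\pi\in(0,1]$, to be optimized later. Retain each vertex independently with probability $\pi$, giving $W$. Then, within $W$, delete every vertex that is the rightmost vertex of a forbidden configuration (Conf1) or (Conf2) spanned by $W$; equivalently, keep the set $W'$ of retained vertices that are not rightmost in any such configuration. Because each configuration is determined by its rightmost vertex being the last vertex processed, the induced ordering on $\cH[W']$ contains no (Conf1)/(Conf2) configurations. Moreover, $\cH[W']$ is $D$-degenerate along the induced ordering with $D\approx \pi^{k-1}d$ for most vertices. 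We then run the relaxed algorithm on $\cH[W']$, with a local demand depending on the left degree $\deg_{\cH[W']_i}(v_i)$ (Proposition~\ref{prop: degenerate + local demand} with $\delta=\pi^{k-1}d$ and a constant demand). Finally, we output the resulting independent set, which is independent in $\cH$ because $\cH[W']$ is induced.

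The probability computation goes as follows. For a vertex $v$, $\pr(v\in I)\ge \pi\cdot \pr(v\notin \text{deleted}\mid v\in W)\cdot \min(\text{demand})$, where the demand is roughly $((\log D)/((k-1)D))^{1/(k-1)}$. To apply the proposition with a uniform $\delta$, we instead cap left degrees: also delete $v$ if its left degree in $W$ exceeds $(1+\eps)\pi^{k-1}d$, which by Chernoff happens with probability $o(1)$. The main obstacle is estimating the expected number of forbidden configurations with rightmost vertex $v$. Using $d$-degeneracy and linearity, we count (Conf1): $v$ lies in at most $d$ left-edges $e$, and each other vertex $u$ of $e$ lies in at most $d$ edges $f$ that are left-edges of $u$ or of a later vertex. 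Closing the $3$-cycle with an edge $g$ forces extra structure, and the count is $O(k^2d^2)$ configurations with about $3k-3$ sampled vertices other than $v$. For (Conf2), the count is $O(k^2 d^3)$ with $4k-5$ sampled vertices other than $v$, since four edges of size $k$ in a $4$-cycle span $4k-4$ vertices. Thus the expected number is $O(d^3\pi^{4k-5})$, plus a lower-order term for (Conf1). We require this to be at most $\eps$, so we take $\pi=(\eps/(Cd^3))^{1/(4k-5)}$ up to subpolynomial factors. By Markov, $v$ survives the deletion with probability at least $1-\eps$.

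It remains to optimize. The probability is $\pi\,(1-O(\eps))\big((\log D)/((k-1)D)\big)^{1/(k-1)}$ with $D=\pi^{k-1}d$. The factor $\pi$ cancels against $D^{-1/(k-1)}$, leaving $((\log D)/((k-1)d))^{1/(k-1)}$. Now $\log D=\log d+(k-1)\log\pi=\log d\,(1-3(k-1)/(4k-5))=\log d\,\frac{k-2}{4k-5}$. This yields exactly $\big(\frac{k-2}{(4k-5)(k-1)}\frac{\log d}{d}\big)^{1/(k-1)}$, matching the claimed bound; note that $k\ge3$ is needed for $D\to\infty$. The hard step is the configuration count: verifying that (Conf2) has only $O(d^3)$ choices with $4k-5$ free vertices per rightmost $v$, and that (Conf1) is dominated by it. For this we plan a careful case analysis of which edges are ``left'' edges, using degeneracy at each edge's rightmost vertex. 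The algorithm runs in polynomial time, since enumerating configurations is polynomial in $n$ and $d$.
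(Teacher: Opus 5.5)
Your plan follows the paper's appendix proof closely. You sample at a rate giving $d\pi^{k-1}=d^{\frac{k-2}{4k-5}+o(1)}$, delete vertices of large left degree and rightmost vertices of (Conf1)/(Conf2) configurations in the sample, and run the relaxed sampler on what remains. The (Conf2) term $d^3\pi^{4k-5}$ is what dictates $\log D\approx\frac{k-2}{4k-5}\log d$. Your exact choice $\pi^{4k-5}=\varepsilon/(Cd^3)$, in place of the paper's $d p^{k-1}=d^{\mu}$ with $\mu=(1-\varepsilon/4)^{k-1}\frac{k-2}{4k-5}$, makes no difference.

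The step that fails as written is the first-moment bound for (Conf2). You assert that the four edges of a $4$-cycle span $4k-4$ vertices. In a linear hypergraph this holds only when the nonconsecutive edges are disjoint. A (Conf2) configuration $e,g,h,f$ may have $e\cap h\neq\emptyset$ or $g\cap f\neq\emptyset$, and then it has at most $4k-6$ vertices besides $v$. If you count these with the same $O(d^3)$ bound, you get $O(d^3\pi^{4k-6})=O(d^{3/(4k-5)})$, or worse if both intersections occur. This diverges for your $\pi\asymp d^{-3/(4k-5)}$, so Markov no longer gives survival probability $1-\varepsilon$.

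The missing observation, which the paper uses, is that such a configuration contains a (Conf1) configuration with the same rightmost vertex.
\begin{itemize}
\item If $x\in e\cap h$, then $e,g,h$ is a $3$-cycle whose rightmost vertex $v$ lies in $e$, and the rightmost vertex of $g$ also lies in $e$. Linearity makes the core vertices $e\cap g$, $g\cap h$, $x$ distinct.
\item Symmetrically, if $g\cap f\neq\emptyset$, use $e,g,f$.
\end{itemize}
So the event that $v$ is rightmost in a degenerate (Conf2) is contained in the (Conf1) event, whose probability is $O(d^2\pi^{3k-4})=O(d^{-(k-2)/(4k-5)})\to0$. Only nondegenerate (Conf2) configurations, with exactly $4k-5$ vertices besides $v$, need your $O(d^3)$ count. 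Alternatively, you can recount the degenerate ones directly: there are $O(d^2)$ of them, each containing a $3$-cycle.

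Two smaller points. A $3$-cycle has $3k-4$ vertices besides $v$, not $3k-3$; this is harmless. More importantly, in the (Conf1) count, the bound of at most $d$ edges $f$ through $u$ is valid only for edges whose rightmost vertex is $u$ itself. Degeneracy says nothing about edges through $u$ that end at other vertices. The (Conf1) condition is what lets you choose the rightmost vertex of $f$ inside $e$ ($k$ ways), then $f$ among at most $d$ left-edges, then $g$ via two intersection vertices by linearity, for a total of $k^3d^2$.
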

The proof of Corollary~\ref{corollary: degenerate linear} is technical but rather standard, so we omit the formal details and instead provide a sketch in Appendix~\ref{appendix}.

It is natural to ask whether the bound in Theorem~\ref{thm: degenerate}\ref{item: degenerate ub} extends to all linear hypergraphs; that is, whether we can remove the restriction on cycle configurations.

\begin{problem}\label{problem: degeneracy linear}
Let $k \ge 3$ be an integer, and let $\eps > 0$ be arbitrary. Does there exist $d_0 \in \N$ such that the following holds for every $d \ge d_0$? If $\cH$ is a $k$-uniform $d$-degenerate linear hypergraph, then
\[
\chi_f(\cH) \le (1+\eps)\left((k-1) \frac{d}{\log d}\right)^{\frac{1}{k-1}}.
\]
\end{problem}

Theorem~\ref{thm: local demand} gives a sharp vertex-dependent guarantee for uncrowded hypergraphs, whereas Corollary~\ref{corollary: degenerate linear} applies to the broader class of linear hypergraphs but gives the same inclusion guarantee at every vertex.
It is natural to ask whether the local statement remains true under the weaker assumption of linearity.

\begin{problem}\label{problem: local linear demands}
Let $k \ge 3$ be an integer, and let $\eps > 0$ be arbitrary. Does there exist $\delta_0 \in \N$ such that the following holds? If $\cH$ is a $k$-uniform linear hypergraph of minimum degree at least $\delta_0$, and $\phi$ is a demand function for $\cH$ satisfying, for every $v \in V(\cH)$,
\[
\phi(v) \le (1-\eps)\left(\frac{1}{k-1} \, \frac{\log \deg(v)}{\deg(v)}\right)^{\frac{1}{k-1}},
\]
must $\cH$ admit a $\phi$-coloring?
\end{problem}

A positive answer to Problem~\ref{problem: local linear demands} would extend the fixed-error consequence of Theorem~\ref{thm: local demand} from uncrowded hypergraphs to linear hypergraphs. Even for regular linear hypergraphs, it would strengthen Corollary~\ref{corollary: degenerate linear} to the asymptotically optimal bound in Theorem~\ref{thm: degenerate}\ref{item: degenerate ub}. The random-sampling reduction used for Corollary~\ref{corollary: degenerate linear} first passes to an induced subhypergraph without the forbidden cycle configurations. When the vertex degrees vary widely, a single sampling probability does not preserve the scale required by every vertex.

One possible direction is to avoid this initial sampling step and instead expose the vertices in a random order, discarding or postponing a vertex only when its exposure would complete a short cycle. The central difficulty is to eliminate all cycles of length at most four while retaining the required inclusion probability separately at every vertex. Resolving this difficulty would answer Problem~\ref{problem: local linear demands}.

\medskip
\noindent
\textit{Forthcoming work on triangle-free hypergraphs.}
As defined in Section~\ref{subsection: background}, a \emph{triangle} is a collection of three distinct edges $e,f,g$ and three distinct vertices $u,v,w$ such that $u\in e\cap f$, $v\in f\cap g$, $w\in e\cap g$, and $\{u,v,w\}\cap e\cap f\cap g=\varnothing$. Note that a triangle-free hypergraph need not be linear. Cooper and Mubayi~\cite[Theorem~2]{cooper2015list} proved that $\chi_\ell(\cH)=O\left(\sqrt{\Delta/\log\Delta}\right)$ for triangle-free $3$-uniform hypergraphs of maximum degree at most $\Delta$. Extending this result, Li and Postle~\cite[Theorem~1.4]{lipostle2022} proved that $\chi_\ell(\cH)=O_k\left((\Delta/\log\Delta)^{\frac{1}{k-1}}\right)$ for every fixed $k\geq3$ and every triangle-free $k$-uniform hypergraph of maximum degree at most $\Delta$.
In forthcoming work~\cite{dhawanli2026trianglefree}, Dhawan, Li, Methuku, and Vo generalize this result to fractional coloring in terms of degeneracy, resolving a conjecture of Dhawan~\cite[Conjecture~5.4]{dhawan2026fractional}.
\begin{theorem}[\cite{dhawanli2026trianglefree}]\label{thm: forthcoming triangle-free}
For every integer $k\geq2$, there exists $d_0\in\N$ such that, for every $d\geq d_0$, every $k$-uniform $d$-degenerate triangle-free hypergraph $\cH$ satisfies
\[
 \chi_f(\cH)\leq (1+o_d(1))\left(\frac{k}{k-1}\right)\left(k(k-1)\,\frac{d}{\log d}\right)^{\frac{1}{k-1}},
\]
where $k$ is fixed as $d\to\infty$.
\end{theorem}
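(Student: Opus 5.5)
This is a citation to forthcoming work, so the plan is to adapt the machinery of Section~\ref{section: fractional coloring} rather than the uncrowded argument itself.

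\textbf{Procedure.} We run a variant of Algorithm~\ref{algorithm: iset uncrowded} along a degeneracy ordering $(v_1,\dots,v_n)$. The target bound has the same shape as Theorem~\ref{theo: hypergraph girth 4}, but with $k(k-1)^2$ replaced by $k(k-1)$. This suggests the following strategy:
\begin{itemize}
\item keep the entropy/first-moment style analysis of Dhawan~\cite{dhawan2026fractional};
\item incorporate the delayed update rule, which saves one factor of $k-1$ exactly as in the overview of Section~\ref{subsection: upper bound overview};
\item keep the alteration factor $\frac{k}{k-1}$ and the factor $k$ coming from the non-linear setting.
\end{itemize}
Concretely, each vertex $v_j$ starts with weight $\alpha$. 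For every edge $e$ whose last vertex is $v_j$, the weight $p_i(v_j)$ is updated only when the penultimate vertex of $e$ is processed, using rule~\eqref{eq: update intro}. Because the hypergraph need not be linear, several edges containing $v_j$ may share the same penultimate vertex $v_i$. In that case we update multiplicatively, dividing by $1-p_{i-1}(v_i)$ once if at least one such edge has its prefix inside $I$. The analogue of Lemma~\ref{lemma: potential} then holds verbatim, so $\E[p_{j-1}(v_j)]=\alpha$, and it remains to control the loss from bad vertices.

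\textbf{Controlling bad vertices.} Triangle-freeness does not give the disjointness of left-neighbourhoods that drove Claim~\ref{claim: S_i}, since $4$-cycles are allowed. So I would not aim for a second-moment bound. Instead, following~\cite{dhawan2026fractional}, I would bound $\pr(v_j \text{ bad})$ through a supermartingale for $\log p_i(v_j)$, or equivalently through the entropy $\E[-p\log p]$. The increments of $\log p_i(v_j)$ are $-\log(1-p_{i-1}(v_i))$ on the event $e_{<i}\subseteq I$, and their expected sum is about $d\alpha^{k-1}$ times the number of edges. Triangle-freeness is used here: the events $\{e_{<i}\subseteq I\}$ for different edges through $v_j$ must be nearly uncorrelated with the weights being tracked. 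Concretely, a vertex of one edge $e\ni v_j$ cannot be a left-neighbour of a vertex of another edge $f\ni v_j$ unless the intersection pattern is degenerate (all three edges sharing a common vertex), which is exactly what the triangle definition excludes. A Freedman-type concentration, or a capped exponential moment in the style of the $A_{\ell,i}$ factors, should give $p_{j-1}(v_j)\le \alpha^{o(1)}$ with probability $1-o(1)$ when $\alpha\approx\big(\frac{k-1}{k}\cdot\frac{1}{k(k-1)}\frac{\log d}{d}\big)^{1/(k-1)}$.

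\textbf{Alteration step and main obstacle.} The final factor $\frac{k}{k-1}$ comes from an alteration in the spirit of~\cite[Fact~1]{dhawan2026fractional}. We sample with a slightly larger $\alpha$ and then delete one vertex from each edge that survives fully. Optimizing $\alpha(1-\alpha^{k-1}\cdot\text{const})$ gives this factor. The main obstacle is the non-linear structure: edges sharing two or more vertices with $v_j$ produce correlated updates. I would first try charging such edge clusters to their common penultimate vertex and show that the multiplicative effect is still at most $\exp((1+o(1))\,d\alpha^{k-1}\cdot k)$. This charging is where the factor $k$ in $k(k-1)$ would arise.
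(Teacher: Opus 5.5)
The paper gives no proof of this statement; it is quoted from forthcoming work, described only as using the techniques of Section~\ref{section: fractional coloring}. So I assess your plan on its own merits. Your sampling procedure is fine, as is your rule for several edges sharing the pair $(v_i,v_j)$ and the martingale property. You are also right that the delayed update is what saves the factor $k-1$ relative to Theorem~\ref{theo: hypergraph girth 4}.

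\textbf{The gap: controlling bad vertices.} This step is the entire content of the proof, and your plan does not carry it out.

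\emph{Your target is too weak.} The loss is $B\coloneqq\mathbb{E}\bigl[p_{j-1}(v_j)\,\I[v_j\text{ bad}]\bigr]$. Bad weights are at least $\hat p=\alpha^{o(1)}\gg\alpha$, so you would need $\pr(v_j\text{ bad})\le\alpha^{1-o(1)}$, not merely $o(1)$.

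\emph{The tools you propose are not available.} Since $4$-cycles are allowed, a single processed vertex can boost the weights of many penultimate vertices of edges ending at $v_j$ at once. Then $\prod_\ell A_{\ell,i}$ is no longer a supermartingale; this is exactly the situation excluded in Claim~\ref{claim: S_i} by disjointness of left-neighbourhoods. The increments of $\log p_i(v_j)$ are then strongly positively correlated, so there is no Freedman-type concentration at the required scale. For the same reason, triangle-freeness cannot make the events $\{e_{<i}\subseteq I\}$ for different edges through $v_j$ nearly uncorrelated. As a sanity check, nothing in your heuristic depends on the particular $\alpha$. If it worked, it would work up to $\alpha\approx\bigl(\frac{1}{k-1}\frac{\log d}{d}\bigr)^{1/(k-1)}$, and for $k=2$ it would prove the open conjecture of Martinsson and Steiner.

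\emph{The constants are attributed to the wrong mechanisms.} Your update rule already zeroes the last vertex of every edge whose other vertices are in $I$. So no edge survives fully, and the alteration step does nothing. Edges sharing a penultimate vertex cost nothing extra, since the indicator that at least one prefix lies in $I$ is at most the sum of the individual indicators. Your $\alpha$, with $\frac{k-1}{k}$ inside the $(k-1)$-th root, does not match the stated bound when $k\ge3$.

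\textbf{The missing idea: use the entropy functional exactly.} Write $X\coloneqq p_{j-1}(v_j)$. Each update sends $p\mapsto0$ with probability $q$ and $p\mapsto p/(1-q)$ otherwise, so $\mathbb{E}[p'\log p'\mid\cF]=p\log p+p\log\frac{1}{1-q}$. Therefore
\[
\mathbb{E}[X\log X]\le\alpha\log\alpha+(1+o(1))\sum_{e}\mathbb{E}\bigl[M^e_{s-1}\bigr],\qquad M^e_t\coloneqq p_t(v_j)\,\I\bigl[e_{\le t}\subseteq I\bigr]\prod_{u\in e_{(t,j)}}p_t(u).
\]
Here $e$ ranges over the at most $d$ edges whose last vertex is $v_j$, and $v_s$ is the penultimate vertex of $e$, so $M^e_{s-1}=p_{s-1}(v_j)\,\I[e_{<s}\subseteq I]\,p_{s-1}(v_s)$.

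Each $M^e_t$ is a supermartingale with $M^e_0=\alpha^k$:
\begin{itemize}
\item When a vertex $x\in e$ is processed, the indicator gains the factor $\I[x\in I]$. Its conditional mean is the factor $p_{t-1}(x)$ that leaves the product, and on $\{x\in I\}$ weights can only be set to zero.
\item A vertex $x\notin e$ cannot be the penultimate vertex of two edges ending at distinct vertices of $e$, since those two edges together with $e$ would form a triangle. So $x$ changes at most one weight in $M^e$, by a martingale step.
\end{itemize}
This is the only use of triangle-freeness. It involves one edge at a time, so $4$-cycles through $v_j$ are harmless.

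On the other side, $x\log x\ge x\log\alpha+x-\alpha$ on $\{X<\hat p\}$ and $X\log X\ge X\log\hat p$ on $\{X\ge\hat p\}$. These give $\mathbb{E}[X\log X]\ge\alpha\log\alpha+B\bigl(\log(\hat p/\alpha)-1\bigr)$. Hence
\[
\pr(v_j\in I)=\alpha-B\ge\alpha\left(1-(1+o(1))\,\frac{(k-1)\,d\,\alpha^{k-1}}{\log d}\right).
\]
Set $d\alpha^{k-1}=\frac{c\log d}{k-1}$ and maximize $(1-c)\,c^{1/(k-1)}$; the optimum is $c=1/k$. This single optimization produces both the outer factor $\frac{k}{k-1}$ and the inner factor $k$ in the stated bound.
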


Setting $k=2$ in the above result recovers Theorem~\ref{theo: martinsson steiner}. For general $k$, it improves the bound in Theorem~\ref{theo: hypergraph girth 4} by a factor of $(k-1)^{\frac{1}{k-1}}$ and extends it to the more general setting of triangle-free hypergraphs.
Since $\alpha(\cH)\geq |V(\cH)|/\chi_f(\cH)$, Theorem~\ref{thm: forthcoming triangle-free} also yields a corresponding bound on the independence number of $d$-degenerate triangle-free hypergraphs.
By applying random sampling as in~\cite[Sections~1.2 and~1.3]{lipostle2022}, we obtain improved bounds on the independence number for several classes of sparse hypergraphs.

After completing a preliminary draft of our forthcoming paper~\cite{dhawanli2026trianglefree}, in which we prove Theorem~\ref{thm: forthcoming triangle-free}, we learned that Yu and Zhang~\cite{yu2026trianglefree} had independently obtained a proof of Theorem~\ref{thm: forthcoming triangle-free}. Their proof differs from ours. As in their proof of the upper bound in Theorem~\ref{thm: degenerate}, they use an independent set sampling approach in the style of Martinsson and Steiner~\cite{martinsson2025random}. Our forthcoming paper, by contrast, uses the techniques introduced in this paper both to prove this result and to develop further applications.

\subsection*{Statement of AI use}

During the development of the upper-bound argument, the authors first formulated a simplified version of the procedure in~\cite{dhawan2026fractional}, eliminating auxiliary coin flips and lower-threshold truncation, and replacing the original weight-update rule with a simpler survival-normalization rule: a weight is set to zero if the vertex does not survive and is otherwise divided by its survival probability. This preliminary procedure retained a parameter $\theta$ controlling the activation of unprocessed vertices. Using OpenAI's ChatGPT 5.6 Sol to optimize the choice of $\theta$ suggested setting $\theta=0$, which eliminated the need for auxiliary activations and led to the simplified procedure analyzed in this paper.

Additionally, ChatGPT 5.6 Sol was used to polish the writing, improve the exposition, and support literature searches. All suggestions were thoroughly reviewed and refined by the authors, who take full responsibility for the correctness of the arguments, citations, and final exposition.

\vspace{2mm}
\printbibliography

\appendix

\section{Proof of Corollary~\ref{corollary: degenerate linear}}\label{appendix}

In this section, we give an informal sketch of the proof of Corollary~\ref{corollary: degenerate linear}.
We adapt the random sampling approach of Duke, Lefmann, and R\"odl~\cite{duke1995uncrowded}---which reduces the linear setting to the uncrowded one---to the degeneracy setting, where it suffices to exclude the configurations above.
It suffices to consider $0<\eps<1$.
Fix a $d$-degenerate $k$-uniform linear hypergraph $\cH$ and a degeneracy ordering $(v_1, \ldots, v_n)$ of its vertices. Let $A$ be a $p$-random subset of $V(\cH)$, where
\[
p \coloneqq d^{-\frac{1-\mu}{k-1}},
\qquad
\mu \coloneqq (1-\eps/4)^{k-1}\frac{k-2}{4k-5}.
\]
Set $\cH' \coloneqq \cH[A]$, with the ordering inherited from the fixed degeneracy ordering of $\cH$.
We form the induced subhypergraph $\cH''$ by deleting every vertex $v$ of $\cH'$ satisfying any of the following:
\begin{enumerate}[label=(L\arabic*)]
    \item\label{linear degree} $v$ is the rightmost vertex of more than $(1+\eps/4)dp^{k-1}$ edges of $\cH'$;
    \item\label{linear 3 cycle} $v$ is the rightmost vertex of a $3$-cycle of configuration~\ref{item: 3 cycle type} in $\cH'$; or
    \item\label{linear 4 cycle} $v$ is the rightmost vertex of a $4$-cycle of configuration~\ref{item: 4 cycle type} in $\cH'$.
\end{enumerate}
Since $\cH$ is linear and the ordering is $d$-degenerate, each vertex is the rightmost vertex of at most $k^3d^2$ cycles of configuration~\ref{item: 3 cycle type} and at most $k^3d^3$ cycles of configuration~\ref{item: 4 cycle type}.
Indeed, for the first bound, choose an edge ending at $v$, an edge ending at one of its vertices, and two distinct intersection vertices that determine the remaining edge by linearity. For the second, choose the two edges ending at $v$, one further edge ending at a vertex of the first, and two distinct intersection vertices that determine the last edge.

A $3$-cycle in a linear $k$-uniform hypergraph has $3k-3$ vertices. A $4$-cycle whose nonconsecutive edges are disjoint has $4k-4$ vertices. If a cycle of configuration~\ref{item: 4 cycle type} has an additional intersection between nonconsecutive edges, it contains a cycle of configuration~\ref{item: 3 cycle type} with the same rightmost vertex. Such a vertex is already deleted under~\ref{linear 3 cycle}.
Conditional on $v\in A$, the edges ending at $v$ survive independently, each with probability $p^{k-1}$. Thus, a Chernoff bound and a union bound give
\begin{align*}
\pr\left(v \notin V(\cH'') \,|\, v \in V(\cH')\right)
&\le \underbrace{\exp(-\Omega_{\eps}(dp^{k-1}))}_{\ref{linear degree}}
+ \underbrace{k^3d^2p^{3k-4}}_{\ref{linear 3 cycle}}
+ \underbrace{k^3d^3p^{4k-5}}_{\ref{linear 4 cycle}} \le \frac{\eps}{4},
\end{align*}
for sufficiently large $d$. Indeed, $dp^{k-1}=d^{\mu}\to\infty$, and both cycle terms tend to zero because $0<\mu<(k-2)/(4k-5)$.

Set $d' \coloneqq (1+\eps/4)dp^{k-1}$. Deterministically, $\cH''$ is a $\lfloor d'\rfloor$-degenerate linear $k$-uniform hypergraph, and its inherited ordering contains no cycles of configuration~\ref{item: 3 cycle type} or~\ref{item: 4 cycle type}.
We apply the extension of the sampling argument in the proof of Theorem~\ref{thm: degenerate}\ref{item: degenerate ub} described above to $\cH''$, with error parameter $\eps/4$. The resulting independent set $I$ is also independent in $\cH$, since $\cH''$ is induced.
For sufficiently large $d$, rounding $d'$ down only improves the bound below, since $(\log t)/t$ is decreasing for $t>e$. We obtain
\begin{align*}
    \pr(v \in I) &\ge \pr(v \in V(\cH'')) \, \frac{1}{1+\eps/4} \left(\frac{1}{k-1} \, \frac{\log d'}{d'}\right)^{\frac{1}{k-1}} \\
    &= \pr(v \in V(\cH')) \, \pr\left(v \in V(\cH'') \,|\, v \in V(\cH')\right) \, \frac{1}{1+\eps/4} \left(\frac{1}{k-1} \, \frac{\log d'}{d'}\right)^{\frac{1}{k-1}} \\
    &\ge \left(1-\frac{\eps}{4}\right) \, p \, \frac{1}{1+\eps/4} \left(\frac{1}{k-1} \, \frac{\log d'}{d'}\right)^{\frac{1}{k-1}}.
\end{align*}
Substituting $d'=(1+\eps/4)dp^{k-1}$ gives
\begin{align*}
    \pr(v \in I)
    &\ge \left(1-\frac{\eps}{4}\right) \, p \, \frac{1}{1+\eps/4} \left(\frac{1}{k-1}\, \frac{\log((1+\eps/4)dp^{k-1})}{(1+\eps/4)dp^{k-1}}\right)^{\frac{1}{k-1}} \\
    &\ge \frac{1-\eps/4}{(1+\eps/4)^{1+1/(k-1)}} \left(\frac{\mu}{k-1}\,\frac{\log d}{d}\right)^{\frac{1}{k-1}} \\
    &= \frac{(1-\eps/4)^2}{(1+\eps/4)^{1+1/(k-1)}} \left(\frac{k-2}{(4k-5)(k-1)}\, \frac{\log d}{d}\right)^{\frac{1}{k-1}} \\
    &\ge (1-\eps) \left(\frac{k-2}{(4k-5)(k-1)}\,\frac{\log d}{d}\right)^{\frac{1}{k-1}}.
\end{align*}
For the last inequality, we use $(1+\eps/4)^{-1-1/(k-1)}\ge (1-\eps/4)^2$ and $(1-\eps/4)^4\ge 1-\eps$.
This proves the stated inclusion bound. Applying the construction with $\eps/(1+\eps)$ in place of $\eps$ gives an inclusion probability of at least
\[
\frac{1}{1+\eps}\left(\frac{k-2}{(4k-5)(k-1)}\,\frac{\log d}{d}\right)^{\frac{1}{k-1}}
\]
at every vertex, which yields the claimed upper bound on $\chi_f(\cH)$ and also implies the stated inclusion bound. All deletions and the independent-set sampler can be implemented in polynomial time.

\end{document}